\DeclareMathAlphabet{\mathbf}{OT1}{cmr}{bx}{it}
\newcommand{\dist}{\mathrm{d}}
\newcommand{\Rn}{\mathbb{R}^n}
\newcommand{\Rnn}{\mathbb{R}^{n \times n}}
\newcommand{\C}{\mathbb{C}}
\newcommand{\Cn}{\mathbb{C}^n}
\newcommand{\Cnn}{\mathbb{C}^{n \times n}}
\newcommand{\sizeL}{\ell}
\newcommand{\T}{\mathcal{T}}
\newcommand{\W}{\mathcal{W}}
\newcommand{\OO}{\mathcal{O}}
\newcommand{\Z}{\mathbb{Z}}
\DeclareMathOperator{\spec}{spec}
\DeclareMathOperator{\tr}{tr}
\DeclareMathOperator{\sign}{sign}
\DeclareMathOperator{\col}{col}
\DeclareMathOperator{\Li}{Li}
\DeclareMathOperator{\tridiag}{tridiag}
\DeclareMathOperator{\cov}{cov}
\newcommand{\udist}{\bar \dist}
\newtheorem{remarksimple}[theorem]{Remark}
\let\oldremarksimple\remarksimple
\renewcommand{\remarksimple}{\oldremarksimple\normalfont}
\newenvironment{remark}{\begin{remarksimple}}{\hfill$\diamond$\end{remarksimple}}
\newtheorem{examplesimple}[theorem]{Example}
\let\oldexamplesimple\examplesimple
\renewcommand{\examplesimple}{\oldexamplesimple\normalfont}
\newenvironment{example}{\begin{examplesimple}}{\hfill$\diamond$\end{examplesimple}}
\def\cvd{~\vbox{\hrule\hbox{%
   \vrule height1.3ex\hskip0.8ex\vrule}\hrule } }
\newcommand{\review}[1]{{\color{black}#1}}
\title{Analysis of probing techniques for sparse approximation and trace estimation of decaying matrix functions\thanks{This work was partially supported by Deutsche Forschungsgemeinschaft through the Collaborative Research Centre SFB-TRR55 ``Hadron Physics from Lattice QCD''.}}
\author{Andreas Frommer\thanks{Department of Mathematics, Bergische Universit\"at Wuppertal, 42097 Wuppertal, Germany, \texttt{\{frommer,schimmel\}@math.uni-wuppertal.de}} \and Claudia Schimmel$^\dag$\!\!\!\and Marcel Schweitzer\thanks{Mathematisch-Naturwissenschaftliche Fakult\"at, Heinrich-Heine-Universit\"at D\"usseldorf, Universit\"atsstra\ss{}e 1, 40225 D\"usseldorf, Germany. E-mail: \texttt{marcel.schweitzer@hhu.de.}}}
\date{\today}
\begin{document}
\renewcommand{\thefootnote}{\fnsymbol{footnote}}
\maketitle \pagestyle{myheadings} \thispagestyle{plain}
\markboth{A. FROMMER, C. SCHIMMEL AND M. SCHWEITZER}{ANALYSIS OF PROBING FOR DECAYING MATRIX FUNCTIONS}

\begin{abstract}
The computation of matrix functions $f(A)$, or related quantities like their trace, is an important but challenging task, in particular for large and sparse matrices $A$. In recent years, probing methods have become an often considered %important 
tool in this context, as they allow to replace the computation of $f(A)$ or $\tr(f(A))$ by the evaluation of (a small number of) quantities of the form $f(A)v$ or $v^Tf(A)v$, respectively. These \review{quantities can then efficiently be computed} by standard techniques like, e.g., Krylov subspace methods.
It is well-known that probing methods are particularly efficient when $f(A)$ is \emph{approximately sparse}, e.g., when the entries of $f(A)$ show a strong off-diagonal decay, but a rigorous error analysis is lacking so far. In this paper we develop new theoretical results on the existence of sparse approximations for $f(A)$ and error bounds for probing methods based on graph colorings. As a by-product, by carefully inspecting the proofs of these error bounds, we also gain new insights into when to stop the Krylov iteration used for approximating $f(A)v$ or $v^Tf(A)v$, thus allowing for a practically efficient implementation of the probing methods. 
\end{abstract}

 \begin{keywords}
matrix functions, sparse approximation, trace, decay bounds, graph coloring, probing method, Krylov subspace method 
 \end{keywords}

\begin{AMS}
05C12, 05C15, 15A16, 65F50, 65F60
\end{AMS}

%--------------------------------------------------------------------------
\section{Introduction}\label{sec:introduction}
Matrix functions $f(A)$, where $f:\C \rightarrow \C$ is a scalar function and $A\in \Cnn$ is a square matrix, play an essential role in many areas of science and engineering. The inverse $A^{-1}$ is the most prominent example, another important case is the matrix exponential $f(A)=\exp(A)$, which is used for the numerical solution of time-dependent differential equations or the analysis of dynamical systems \cite{FrommerSimoncini2008}. 
For the computation of communicability measures in network analysis, the matrix exponential and the resolvent, generated by the scalar function $f(z)=(\alpha-z)^{-1}$ with $\alpha \in \C$ are widely used \cite{EstradaRodrigez2005,Estrada2011, EstradaHigham2010}.
The matrix sign function $f(A)=\sign(A)$ has applications in control theory \cite{Roberts1980,FrommerSimoncini2008} and lattice quantum chromodynamics \cite{Neuberger2000,Block2007, Eshof2002}. Inverse fractional powers $f(A)=A^{-\alpha}$ with $\alpha \in (0,1)$ are strongly related to the matrix sign function and arise in generalized eigenvalue problems \cite[Section 15.10]{Parlett1980}, fractional differential equations \cite{Burrage2012} or sampling from multivariate Gaussian distributions~\cite{PleissEtAl2020}.

For many of these applications, the explicit computation of $f(A)$ is not feasible as the matrix $A$ is typically large and sparse, while $f(A)$ is \review{generally} a dense matrix. Therefore, one has to resort to approximation techniques when $f(A)$ or a related quantity like $f(A)b$, $b \in \Cn$, the diagonal $\diag(f(A))$ or the trace $\tr(f(A))$ is required. This work focuses on sparse approximations for the whole matrix $f(A)$ on the one hand and on approximating $\tr(f(A))$ on the other hand.  Computing the trace $\tr(f(A))$ is a relevant task.  For example, the trace of the inverse is required in the study of fractals \cite{Sapoval1991}, generalized cross-validation and its applications \cite{GolubHeathWahba1979,GolubMatt1995}, or when computing disconnected fermion loop contributions in lattice quantum chromodynamics (QCD) \cite{SextonWeingarten1994,DongLiu1993}. In network analysis, the Estrada index---a total centrality measure for networks---is defined as the trace of the exponential of the adjacency matrix of a graph \cite{EstradaHigham2010,Ginosar2008} and an analogous measure is given by the trace of the resolvent \cite[Section 8.1]{Estrada2011}. For Hermitian positive definite matrices $A$, one can compute the log-determinant $\log(\det(A))$ as the trace of the logarithm of $A$. Amongst others, the log-determinant is needed in machine learning and related fields \cite{Rasmussen2005,Rue2005}. \review{Further applications are discussed in \cite{meyer2021hutch++,SaadUbaru2017,SaadUbaruJie2017}}.

In recent years, probing methods~\cite{BollhoeferEftekhariScheideggerSchenk2019,LaeuchliStathopoulos2020,Stathopoulos2013,TangSaad2012} have emerged as an important tool \review{for computing sparse approximations of matrix functions or estimating their trace}. They obtain approximations by evaluating a small number of matrix-vector products or bilinear forms involving $f(A)$, which can be done by standard techniques (e.g., Krylov subspace methods). We brief\/ly summarize the main idea of these methods in the following.

\subsection{Probing methods}
Recall that the (directed) graph $G(A) = (V,E)$  of a sparse matrix $A \in \Cnn$ is given by the vertices $V = \{1,\dots,n\}$ and egdes $E = \{(i,j): a_{ij} \neq 0, i\neq j\}$. By $\dist(i,j)$ we denote the geodesic distance, i.e., the length of the shortest path, from node $i$ to node $j$ in $G(A)$ and by $\udist(i,j)$ the geodesic distance in the corresponding undirected graph $|G(A)|$ which results from $G(A)$ by removing the direction of the edges. 

Given a partitioning $\mathcal{V} = \{V_1,\ldots,V_m\}$ 
of the nodes $V$ of $G(A)$, i.e.,
\begin{equation}\label{eq:node_partitioning}
V=V_1 \cup \ldots \cup V_m, \enspace V_\ell \neq \emptyset \mbox{ for } \ell = 1,\ldots,m  \text{ and } V_\ell \cap V_p = \emptyset \text{ for } \ell \neq p,
\end{equation}
the corresponding probing vectors are defined as
\begin{equation}\label{eq:probing_vectors}
v_\ell:=\sum\limits_{i\in V_\ell} e_i, \; \ell \in \{1,\ldots,m\} \quad  \mbox{($e_i$ is the $i$th canonical unit vector).}
\end{equation}
%where $e_i$ denotes the $i$th canonical unit vector. 
The vectors $v_\ell$ can be used to, e.g., 
estimate $\tr(f(A))$ via
\begin{equation}\label{eq:probing_trace_approximation}
\tr(f(A)) \approx \T(f(A)) := \sum\limits_{\ell=1}^m v_\ell^H f(A) v_\ell,
\end{equation} 
or even construct a sparse approximation to $f(A)$ itself via
\begin{equation}\label{eq:probing_sparse_approximation}
 [f(A)^{[d]}]_{ij}:= \begin{cases}
                            [f(A)v_\ell]_i \text{ for } j \in V_\ell &\text{ if } \udist(i,j) \leq d, \\
                            0 &\text{ if } \udist(i,j) > d,
                           \end{cases}
\end{equation}
where $d$ is a prescribed distance threshold. We refer to, e.g.,~\cite{BollhoeferEftekhariScheideggerSchenk2019,LaeuchliStathopoulos2020,Stathopoulos2013,TangSaad2012} for detailed discussions of such probing approaches and just expose the main motivation: In the (unrealistic) situation that $f(A)$ is a sparse matrix with 
$[f(A)]_{ij} = 0$ for $\dist(i,j) > d$, if the sets $V_\ell$ are chosen such that $[f(A)]_{ij} = 0$ for $i,j \in V_\ell, i\neq j$, both approximations~\eqref{eq:probing_trace_approximation} and~\eqref{eq:probing_sparse_approximation} are actually exact. Therefore, if $f(A)$ is \emph{approximately sparse}, and the $V_\ell$ are built such that $[f(A)]_{ij}$ is small for $i,j \in V_\ell, i\neq j$, we can expect probing methods to yield accurate approximations.

%only want to explain the main motivation behind these methods by a very simple example: Consider the (unrealistic) situation that $f(A)$ is a sparse matrix and the sets $V_\ell$ are chosen such that $[f(A)]_{ij} = 0$ for $i,j \in V_\ell, i\neq j$. In this case, both approximations~\eqref{eq:probing_trace_approximation} and~\eqref{eq:probing_sparse_approximation} introduced above are indeed exact (for suitably chosen $d$). Therefore, if $f(A)$ is \emph{approximately sparse}, in the sense that most of its entries are very small in magnitude, one can expect the probing methods to yield an accurate approximation.

\subsection{Exponential decay in matrix functions}
To make the notion of $f(A)$ precise, recall that $f(A)$ is defined if for all eigenvalues $\lambda$ of $A$ all derivatives of $f$ at $\lambda$  up to order $\nu(\lambda)-1$ exist, where $\nu(\lambda)$ is the multiplicity of the elementary factor $(z-\lambda)$ in the minimal polynomial of $A$, see~\cite{Higham2008}. We tacitly assume that this is always fulfilled whenever we consider $f(A)$. Note that $f(A)$ is then given as the polynomial in $A$ which interpolates $f$ on the spectrum of $A$ in the Hermite sense.

One special form of approximate sparsity in $f(A)$ that is frequently encountered in practice is \textit{exponential decay} of the entries of $f(A)$ away from the sparsity pattern of $A$. 
%This means that most of the entries in $f(A)$ are relatively small in magnitude while the ``important'', large entries are localized \af{on short paths in $G(A)$}.
%round small regions. 
%\af{In the following  we place ourselves in a slightly more general context and formulate decay properties in terms of a matrix $A$ (instead of $f(A)$) with respect to a {\em corresponding} general graph $G$ (instead of $G(A))$), where ``corresponding'' means that the vertex set is $\{1,\ldots,n\}$.} 
%Formally, we define the exponential decay property as follows.
\begin{definition}\label{def:exp_decay}
 Let $A \in \Cnn$ and let $f$ be defined on the spectrum of $A$. The matrix $f(A)$ has exponential decay (away from the sparsity pattern of $A$) if 
 %there exist a (nontrivial) constant $C<\infty$ and a decay rate $q<1$, both independent of $n$, such that
\begin{equation}\label{eq:generalDecaybound}
  |[f(A)]_{ij}| \leq C  q^{\dist(i,j)} \text{ for } i,j \in \{1,\ldots,n\},
  \end{equation}
 where \review{$C > 0, 0 \leq q < 1$ are constants independent of $i,j$ and} $\dist$ is the geodesic distance in $G(A)$. 
\end{definition}
%\begin{definition}
%Let $\{A_n\}$ be a sequence of $n\times n$ matrices with corresponding graphs $\{G_n\}$ and geodesic distances $\dist_n(i,j)$. Let $f$ be defined on $A_n$ for all $n$, then $f(A_n)$ is said to have an exponential decay away from the sparsity pattern of $A_n$ if there exist a constant $C<\infty$ and a decay rate $q<1$, both independent of $n$, such that
% \begin{equation}\label{eq:generalDecaybound}
% |[f(A_n)]_{ij}| \leq C  q^{\dist_n(i,j)} \text{ for } i,j \in \{1,\ldots,n\}.
% \end{equation}
%\end{definition}
%\af{Problem: Die Matrizen sind $n\times n$ und ihr Index ist auch $n$. Hier kommt eine Alternative:}
%
%\af{
%\begin{definition} \label{def:exponential_decay}
%Let $\{A_s\},s \in \mathcal{S}$, where $\mathcal{S}$ is some index set, be a family of $n_s \times n_s$ matrices and let $\dist_s(i,j)$ denote the geodesic distance in $G(A_s))$. %Let $f$ be defined on $A_s$ for all $s \in \mathcal{S}$. 
%Then the family $f(A_s), s \in \mathcal{S}$ is said to have an exponential decay away from the sparsity pattern of $A_s$ if there exist a constant $C<\infty$ and a decay rate $q<1$, both independent of $s$, such that
% \begin{equation}\label{eq:generalDecaybound}
% |[f(A_s)]_{ij}| \leq C  q^{\dist_s(i,j)} \text{ for } i,j \in \{1,\ldots,n_s\}.
% \end{equation}
%\end{definition}
%}

\review{To highlight the importance of the geodesic distance in $G(A)$ in Definition~\ref{def:exp_decay}, we also say that $f(A)$ has exponential decay \emph{with respect to $G(A)$.}}

Of course, for given $A$ the relation \eqref{eq:generalDecaybound} can always be satisfied if we choose $C$ and $q \in(0,1)$ large enough. To be meaningful, the concept of exponential decay therefore implicitly assumes that $C$ and, in particular, $q$ are not too large or that \eqref{eq:generalDecaybound} holds uniformly for a whole, possibly infinite, family of matrices. \review {For example,  exponential decay is a meaningful concept if the family of matrices is such that, on average, the distance $\dist(i,j)$ of two nodes increases as the size of the matrices increases. This is typically the case for matrices arising from refinements of a discretization.  It is not the case, for example, if the matrices are the adjacency matrices of a family of small world graphs \cite{Estrada2011} as they arise in network modeling.}

%
%
%
%is satisfied  \af{We are insisting on {\em families of matrices} in Definition~\ref{def:exponential_decay}, because~\eqref{eq:generalDecaybound} can always be fulfilled for a single matrix and any $q \in (0,1)$ by taking $C$ sufficiently large. We will stress this aspect whenever it will be important in the results to come. Of course, \eqref{eq:generalDecaybound} is also useful for just a single matrix if we know values for $q$ and $C$ for which it  is not a triviality, and we will cover this aspect as well.}  

%Examples of matrix functions $f(A)$ with a strong exponential decay are illustrated in Figure \ref{fig:example_decay}.
%\comment{Kann man auch weglassen, einschl. der Figure.}
Decay in matrix functions has been studied extensively, starting with~\cite{DemkoMossSmith1984}, where accurate exponential decay bounds were presented for inverses of banded (Hermitian positive definite) matrices. Lots of other results and decay bounds for different types of functions and matrices can be found, e.g., in \cite{BenziGolub1999, BenziBoito2014, EijkhoutPolman1988, PT18, Nabben1999, BenziSimoncini2015, BenziRazouk2007, FrommerSchimmelSchweitzer2018_1,FrommerSchimmelSchweitzer2018_2,PozzaSimoncini2016}. \review{Depending on the properties of $f$, even superexponential decay might occur~\cite{BenziBoito2014, PozzaSimoncini2016, benzi20, BenziSimoncini2015}, a phenomenon most frequently encountered for entire functions. 
Many known decay results are derived by exploiting} properties of polynomial approximations to  $f$. Indeed, if $i$ and $j$ have distance $\dist(i,j)$ in the graph $G(A)$, then for every polynomial $p_s$ of degree at most $s=\dist(i,j)-1$ we have $[p_s(A)]_{ij} = 0$, see \cite{BenziRazouk2007} which implies
$$|[f(A)]_{ij}|=|[f(A)]_{ij}-[p_s(A)]_{ij}|\leq \|f(A) - p_s(A) \|_2.  %\leq K \max_{z\in W(A)} |f(z)-p_s(z)|
$$
Herein, $\|f(A)-p_s(A)\|_2$ can be bounded further due to the following important approximation result which uses the numerical range $\mathcal{W}(A) = \{x^HAx: \|x\| = 1\}$.

\begin{theorem}  \label{the:crouzeix}
Let $A\in \Cnn$ and let \review{$g:\mathcal{W}(A) \to \C$ be defined on the numerical range of $A$}. Then
\begin{equation}\label{eq:crouzeix}
\|g(A) \|_2 \leq K \max_{z\in \review{\mathcal{W}(A)}} |g(z)|,
\end{equation}
where $K = 1$ if $A$ is normal and $K = 1 +\sqrt{2}$ otherwise.
\end{theorem}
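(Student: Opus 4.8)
The plan is to split the proof according to the value of the constant, since the two cases are of entirely different character. I would first dispose of the case $K=1$ with the spectral theorem. If $A$ is normal we may write $A = U\Lambda U^H$ with $U$ unitary and $\Lambda = \mathrm{diag}(\lambda_1,\dots,\lambda_n)$, so that $g(A) = U\,\mathrm{diag}(g(\lambda_1),\dots,g(\lambda_n))\,U^H$ is again normal. Hence its spectral norm equals its spectral radius, $\|g(A)\|_2 = \max_i |g(\lambda_i)|$. Since each eigenvalue of $A$ lies in the numerical range, $\lambda_i \in \spec(A) \subseteq \W(A)$, every $|g(\lambda_i)|$ is bounded by $\max_{z\in\W(A)}|g(z)|$, and \eqref{eq:crouzeix} follows with $K=1$.

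The case $K = 1+\sqrt{2}$ is the Crouzeix--Palencia theorem and is the substantial part. I would first reduce to a convenient setting: by the classical Toeplitz--Hausdorff theorem $\W(A)$ is compact and convex, and by a standard approximation argument it suffices to prove the bound when $\Omega \supseteq \W(A)$ is a convex compact set with smooth boundary on a neighborhood of which $g$ is holomorphic; the estimate for general $g$ then follows by shrinking $\Omega$ towards $\W(A)$ and by density, using continuity of $g \mapsto g(A)$ and of $g \mapsto \max_{\W(A)}|g|$. The core device is the Cauchy-transform companion of $g$, namely the function $h$ holomorphic in $\Omega$ defined by $h(z) = \frac{1}{2\pi i}\int_{\partial\Omega} \frac{\overline{g(s)}}{s-z}\,\dist s$, together with two complementary estimates. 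On the one hand, a function-theoretic estimate controls $h$, and the relevant boundary combination of $g$ and $\overline{h}$, in terms of $\max_{\partial\Omega}|g|$; here the convexity of $\Omega$ enters decisively through a sign condition on the Cauchy kernel along the boundary. On the other hand, an operator estimate bounds $\|g(A)\|_2$ by the boundary data of $g$ and $h$, using the containment $\W(A)\subseteq\Omega$. Combining the two and optimizing their interplay produces the constant $1+\sqrt{2}$.

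The main obstacle is the operator estimate. One cannot simply push the numerical range through the holomorphic functional calculus: if one could, i.e.\ if $\W(g(A))$ were always contained in $\overline{\mathrm{conv}}\,g(\W(A))$, the constant would be $1$, which is false for non-normal $A$. The Crouzeix--Palencia argument circumvents this by pairing $g(A)$ with vectors through the Riesz--Dunford representation $g(A) = \frac{1}{2\pi i}\int_{\partial\Omega} g(s)(sI-A)^{-1}\,\dist s$ and playing $g$ off against its companion $h$, so that the quadratic-form estimates coming from $\W(A)\subseteq\Omega$ can be assembled into a genuine norm bound. Extracting the sharp factor $\sqrt{2}$, rather than the weaker bound $2$ that a cruder pairing yields, is the delicate quantitative heart of the argument, and is where I would expect to spend most of the effort.
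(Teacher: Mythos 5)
The first thing to note is that the paper itself does not prove this theorem: it treats it as a known result, remarking that it is ``almost a triviality'' in the Hermitian case and citing Crouzeix and Palencia \cite{CrouzeixPalencia2017} for the general case. Your first paragraph, the case $K=1$, is a complete and correct proof: for normal $A$ we have $g(A)=U g(\Lambda)U^H$ with $\Lambda$ diagonal, so $g(A)$ is normal, its $2$-norm equals its spectral radius, and $\spec(A)\subseteq\W(A)$ gives the bound. This is exactly the easy part the paper alludes to, and it is the only part of your proposal that is actually a proof.

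For non-normal $A$, what you have written is a faithful roadmap of the Crouzeix--Palencia argument, not a proof of it: every step that carries mathematical weight is described rather than established. Concretely, three things are asserted without proof: (i) the scalar estimate $|h(z)|\le\max_{\partial\Omega}|g|$ for the Cauchy companion $h$, which rests on the nonnegativity of the real part of the Cauchy kernel along the boundary of a convex domain; (ii) the operator lemma $\|g(A)+h(A)^H\|_2\le 2\max_{\partial\Omega}|g|$, proved in \cite{CrouzeixPalencia2017} by writing $g(A)+h(A)^H$ as an integral of $g$ against an operator-valued boundary kernel that is positive semidefinite precisely because $\W(A)\subseteq\Omega$ and $\Omega$ is convex; and (iii) the passage from (i)--(ii) to the constant $1+\sqrt{2}$, which pairs with norm-attaining vectors of $g(A)$ to reach a quadratic inequality of the form $\|g(A)\|_2^2\le 2M\|g(A)\|_2+M^2$ with $M=\max_{\partial\Omega}|g|$. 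These three steps \emph{are} the theorem, so as a standalone proof the proposal has a genuine gap---the entire hard case; the legitimate alternative is to do what the paper does and simply cite \cite{CrouzeixPalencia2017}. One concrete step that would actually fail should also be flagged: you claim the estimate for general $g$ follows from the smooth holomorphic case ``by density''. It cannot, because for non-normal $A$ the inequality is false for functions that are merely defined on $\W(A)$ (with derivatives at the eigenvalues, so that $g(A)$ exists). For a $2\times 2$ Jordan block $A$ with superdiagonal entry $r>0$ one has $g(A)=g(0)I+g'(0)A$, and one can choose continuous $g$ with $g'(0)=1$ while $\max_{\W(A)}|g|$ is arbitrarily small; only holomorphy (via Schwarz-type control of $g'$ by $\max|g|$) rules this out. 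Holomorphy of $g$ on the interior of $\W(A)$ must therefore be assumed from the outset---as \cite{CrouzeixPalencia2017} does, and as the paper implicitly does whenever it applies Theorem~\ref{the:crouzeix}---rather than recovered by approximation.
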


Note that this result is almost a triviality for $A$ Hermitian, while the general case is much more involved, see~\cite{CrouzeixPalencia2017}.  
Applying Theorem~\ref{the:crouzeix} to $g = f -p_s$ immediately gives the following result which relates the accuracy of polynomial approximation to exponential decay in the matrix function. 
We will use it several times in this paper. 

\begin{theorem} \label{the:polynomial_approx_decay} 
 \review{Let $\mathcal{W}$ be a compact set and assume that}
 \begin{equation}\label{eq:polynomial_approximation_decay}
\min\limits_{p_s \in \Pi_s} \max_{z \in \mathcal{W}} |f(z)-p_s(z)| \leq Cq^s,
\end{equation}
where \review{$C > 0, 0 \leq q < 1$} and $\Pi_s$ is the set of all polynomials with degree $\leq s$.
Then, if $\W(A) \subseteq \review{\mathcal{W}}$ we have 
$$|[f(A)]_{ij}| \leq \|f(A) - p_s(A) \|_2 \leq KCq^s \mbox{ whenever } \dist(i,j) >  s.
$$
\end{theorem}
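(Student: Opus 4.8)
The plan is to stitch together three ingredients that the excerpt has already assembled: the annihilation property of low-degree polynomials, the elementary bound of a single matrix entry by the spectral norm, and the Crouzeix--Palencia estimate of Theorem~\ref{the:crouzeix}. Since the assumption~\eqref{eq:polynomial_approximation_decay} controls the uniform polynomial approximation error on $\mathcal{W}$, and Theorem~\ref{the:crouzeix} transfers a uniform scalar bound on $\mathcal{W}(A)$ into a bound on the matrix norm, the two chained inequalities in the conclusion should follow with no essential work beyond bookkeeping.

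First I would fix a pair of indices $i,j$ with $\dist(i,j) > s$ and select a polynomial $p_s \in \Pi_s$ attaining the minimum in~\eqref{eq:polynomial_approximation_decay}; since $\mathcal{W}$ is compact and $f-p_s$ is continuous the defining maximum is attained, and the existence of a best uniform approximant out of the finite-dimensional space $\Pi_s$ is standard, so such a $p_s$ exists with $\max_{z\in\mathcal{W}}|f(z)-p_s(z)| \leq Cq^s$ (any near-minimizer would in fact suffice). The structural step is then to invoke the fact recalled immediately before the theorem, that $[p_s(A)]_{ij}=0$ whenever $\deg p_s \leq \dist(i,j)-1$; because $\dist(i,j)>s$ gives $s\leq\dist(i,j)-1$, this applies to our chosen $p_s$. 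Hence I may write $[f(A)]_{ij} = [f(A)]_{ij}-[p_s(A)]_{ij} = [(f-p_s)(A)]_{ij}$ and bound the single entry by the spectral norm, $|[(f-p_s)(A)]_{ij}| = |e_i^H (f-p_s)(A) e_j| \leq \|(f-p_s)(A)\|_2$, which is exactly the first claimed inequality.

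For the second inequality I would apply Theorem~\ref{the:crouzeix} to $g=f-p_s$, which is defined on $\mathcal{W}(A)$ because $f$ is defined on $\mathcal{W}\supseteq\mathcal{W}(A)$ and $p_s$ is a polynomial; this yields $\|(f-p_s)(A)\|_2 \leq K\max_{z\in\mathcal{W}(A)}|f(z)-p_s(z)|$. Finally the hypothesis $\mathcal{W}(A)\subseteq\mathcal{W}$ permits enlarging the domain of the maximum, so $\max_{z\in\mathcal{W}(A)}|f(z)-p_s(z)| \leq \max_{z\in\mathcal{W}}|f(z)-p_s(z)| \leq Cq^s$, and combining the two estimates gives the bound $KCq^s$. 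I expect no genuine obstacle at this level: the entire difficulty of the argument is concentrated in Theorem~\ref{the:crouzeix}, whose non-normal case is highly nontrivial~\cite{CrouzeixPalencia2017}. Taking that result as a black box, the present statement is a direct consequence, which is precisely why the excerpt calls it ``immediate.''
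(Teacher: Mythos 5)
Your proposal is correct and follows exactly the paper's (implicit) argument: the paper derives this result by combining the annihilation property $[p_s(A)]_{ij}=0$ for $\dist(i,j)>s$ with the entrywise bound $|[f(A)]_{ij}|\leq\|f(A)-p_s(A)\|_2$ and then applying Theorem~\ref{the:crouzeix} to $g=f-p_s$, which is precisely your chain of inequalities. The only content you add is the routine justification that a best (or near-best) approximant $p_s$ exists and that the maximum over $\mathcal{W}(A)$ can be enlarged to $\mathcal{W}$, which the paper leaves as the unstated bookkeeping behind the word ``immediately.''
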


%Thus, decay bounds can be obtained \af{if we can find $C$ and $q$ such that} \af{($\Pi_s$ is the set of all polynomials with degree $\leq s$)}
%\begin{equation}\label{eq:polynomial_approximation_decay}
%\min\limits_{p_s \in \Pi_s} \max_{z \in \W(A)} |f(z)-p_s(z)| \leq Cq^s.
%\end{equation}
%\comment{Eigentlich bräuchte man da auch eine Familie, von der der numerische Wertebereich in einer Menge zusammenfällt. Ich versuch's mal}
%
Thus, uniform exponential decay bounds for a family of matrices can be obtained if there is a common superset $\review{\mathcal{W}}$ of their numerical ranges for which \eqref{eq:polynomial_approximation_decay} holds, as it is, e.g., the case for the results in~\cite{DemkoMossSmith1984, BenziRazouk2007, FrommerSchimmelSchweitzer2018_1}.

In our error analysis to come we will sometimes distinguish between general exponential decay bounds for $f(A)$ and bounds which are explicitly based on \eqref{eq:polynomial_approximation_decay}.

%%%%%%%%%%%%
%\newsavebox{\smlmat}
%\savebox{\smlmat}{$\left(\begin{smallmatrix}A&A\\A&A\end{smallmatrix}\right)$}
%\begin{figure}%[htb]
%    \centering
%    \begin{minipage}[t]{0.45\linewidth}
%        \centering
%        \includegraphics[width=.8\linewidth]{Figures/intr_inv}
%    \end{minipage}
%    \hfill
%    \begin{minipage}[t]{0.45\linewidth}
%        \centering
%        \includegraphics[width=.8\linewidth]{Figures/intr_sqrt}
%    \end{minipage}% 
%\caption{Magnitude of the entries of $A^{-1}$ with $A=\textnormal {tridiag}(-1,4,-1) \R^{100 \times 100}$ %(left) and $B^{1/2}$, $B=~\usebox{\smlmat}$, $A=\textnormal {tridiag}(-1,4,-1) \in \R^{50\times 50}$  %(right).}
%\label{fig:example_decay}
%\end{figure}
%%%%%%%%%%%%%

\subsection{Outline of the paper}
The main goal of this paper is to obtain guidelines for choosing the sets $V_\ell$ in~\eqref{eq:node_partitioning} and using this information to derive rigorous error bounds for the resulting approximations \eqref{eq:probing_trace_approximation} and \eqref{eq:probing_sparse_approximation} in case that $f(A)$ exhibits an exponential decay property. In addition, our analysis also sheds light onto when to stop Krylov subspace iterations used for approximating $f(A)v_\ell$ or $v_\ell^H f(A)v_\ell$, respectively, in order to reach an implementation that is as efficient as possible without sacrificing accuracy.

This paper is organized as follows. In Section~\ref{sec:coloring} we discuss the distance-$d$ graph coloring problem as it forms the basis of the discussed probing methods. In Section~\ref{sec:sparseapprox} we first give some new theoretical results on the existence of sparse approximations of matrix functions and then prove new error bounds for the approximation~\eqref{eq:probing_sparse_approximation} for $f(A)$.  Section~\ref{sec:trace} covers error bounds for the approximation~\eqref{eq:probing_trace_approximation} of $\tr(f(A))$, while Section~\ref{sec:krylov} uses the insights from Section~\ref{sec:sparseapprox} to develop stopping criteria for the Krylov subspace approximation inside the probing method. We illustrate the quality of the derived bounds in numerical experiments reported in Section~\ref{sec:experiments}. Concluding remarks are given in Section~\ref{sec:conclusion}.

\section{Distance-$d$ coloring}\label{sec:coloring}
 
%As we will cover in more detail in Sections~\ref{sec:sparseapprox} and~\ref{sec:trace}, 
The quality of the approximations~\eqref{eq:probing_trace_approximation} and~\eqref{eq:probing_sparse_approximation} crucially depends on the partitioning~\eqref{eq:node_partitioning}. If $f(A)$ has exponential decay
with respect to $G(A)$, good partitionings can be obtained via graph colorings.

 \begin{definition}
 A distance-$d$ coloring of a graph $G=(V,E)$  is a mapping $\col: V \rightarrow \{1,\ldots,m\}$ such that $\col(i)\neq \col(j)$ if $\dist(i,j)\leq d$. A distance-$d$ coloring is optimal if the number $m$ of colors is minimal among all distance-$d$ colorings of $G$. 
 \end{definition}

For $d=1$, the computation of an optimal distance-$d$ coloring corresponds to the classical graph coloring problem, which is known to be NP complete for general graphs \cite{Kubale2004}. 
In our setting, we are mainly interested in low-cost methods for computing a distance-$d$ coloring with a sufficiently small number of colors.
Efficient ways for computing such colorings of graphs are usually based on greedy strategies, see, e.g., \cite{Kubale2004}. For example, a distance-$d$ coloring of a graph $G$ with $V=\{w_1,\ldots,w_n\}$ can be obtained via  $\col(w_1)=1$ and
$\col(w_i) = \min\{k>0: k \neq \col(w) \text{ for all } w\in W_i\}$ for $i=2,\ldots,n$
where
\begin{equation} \label{eq:Wi_def}
W_i:=\{w \in \{w_1,\ldots,w_{i-1}\} : \dist(w_i,w)\leq d\}.
\end{equation}
This coloring uses at most $\Delta(G)^d+1$ colors and can be implemented with cost $\OO(\Delta(G)^d n)$, where $\Delta(G)$ is the maximal degree of $G$~\cite[Proposition 4.2]{Schimmel2019}. In the next two sections, we discuss special classes of graphs where a (not necessarily optimal) distance-$d$ coloring can be obtained with cost $\OO(n)$.

\subsection{Distance-$d$ colorings for graphs of banded matrices}\label{subsec:coloring_banded}
Let $A$ be a banded matrix with semi-bandwidth $\beta$, i.e.\ $[A]_{ij} = 0$ whenever $|i-j| >  \beta$. 
%\af{Let the {\em $\beta$-band graph} $G_\beta =  (V,E)$ with $V = \{1,\ldots,n\}$ be defined as having the edges $E = \{(i,j) \in V^2: |i-j| \leq \beta\}$. Obviously,  $G_{\beta}$ is the adjacency graph $G(A)$ of a matrix $A$ if and only $A$ is banded with semi-bandwidth $\beta$ and all entries within the band are non-zero.}  
%
%Let $G$ be a graph with full, $\beta$-banded adjacency matrix, i.e., $[A(G)]_{i,j}=0$ for $|i-j|>\beta$ and $[A(G)]_{i,j}\neq 0$ for $|i-j|\leq \beta$. 
Then it is easy to verify that a distance-$d$ coloring for $G(A)$ with $m= d\beta+1$ colors is given by %$\col:\{1,\ldots,n\} \rightarrow \{1,\ldots,d\beta+1\}$ with
\begin{equation}\label{eq:coloring_banded}
\col(i)= (i-1) \bmod (d\beta +1) +1, \quad i=1,\ldots,n,
\end{equation}
% and
% $$\col^{-1}(j)=\left \{i: i=j+k(d\beta+1), \, k=0,\ldots, \left \lfloor \frac{n-j}{d\beta+1} \right \rfloor \right \}, \quad j=1,\ldots,d\beta+1.$$
and this coloring is optimal if all entries within the band of $A$ are \review{nonzero}.
%If there are zero entries within the band, this coloring might be non-optimal. 
%However, we can then still use this approach as a heuristic. 
If $A$ is sparse but not banded with small $\beta$, % (we only call a matrix banded if $\beta \ll n$)
one can first determine an ordering of the nodes which aims at obtaining a (relatively) small bandwidth for the correspondingly permuted matrix and then define the coloring via \eqref{eq:coloring_banded} on the permuted nodes. Finding a permutation resulting in a small bandwidth is an important topic in the context of direct solvers for linear systems, and lots of low-cost methods have been proposed over the years; see, e.g., \cite{CuthillMcKee1969, Smyth1985, Cheng1973,ReidScott2006, GibbsPooleStockmeyer1967, LimRodiguesXiao2007}. A heuristic based on level sets is at the basis of the classical Cuthill-McKee algorithm \cite{CuthillMcKee1969} with cost $\OO(|V|+|E|)$, and we refer to \cite{GonzagadeOliveira2018} for an overview and comparison of various other recent low-cost heuristics. %Hence, if the adjacency matrix is banded with small bandwidth $\beta$ or if the nodes can be permuted such that the correspondingly permuted matrix has a small bandwidth, the coloring \eqref{eq:coloring_banded} provides a satisfactory distance-$d$ coloring of the graph. 
The cost for computing the coloring is dominated by the cost for the computation of the permutation of the nodes.

\subsection{Distance-$d$ colorings for regular lattices}\label{subsec:coloring_lattice}
As another special case, assume that the graph $G=(V,E)$ is a regular $D$-dimensional lattice for $D>1$.  For $D=1$, the adjacency matrix is tridiagonal, a situation already covered by the banded case discussed before.

First, note that the greedy coloring approach can be made more explicit when applied to regular lattices: Each node $w$ in a regular $D$-dimensional lattice can be identified with its coordinates $w=(w^{[1]},\ldots,w^{[D]})\in \Z^D$, see Figure~\ref{fig:graph_lattice}(left) for an illustration. Using this representation, we have $d(v,w)=\|v-w\|_1= |v^{[1]}-w^{[1]}|+ \cdots + |v^{[D]}-w^{[D]}|$ and thus $W_i$ from \eqref{eq:Wi_def} is given as
$$W_i=\{w\in \{w_1,\ldots,w_{i-1}\}: \|w-w_i\|_1\leq d\}.$$
For an infinite lattice it is known  \cite[Theorem 2.7]{BeckRobins2015} that the cardinality $\sizeL_D(d)$ of the set $\{z\in \Z^D: \|z\|_1 \leq d\}$ %\comment{Die Notation finde ich ungeschickt. Später nehmen wir in section 31. die Bez. $L^k_s$ für Level sets. Ich würde hier gerne kleine $\ell$ nehmen, aber dann muss man im Appendix auch einiges umschreiben... Habe ich jetzt getan.} 
is given as 
\begin{equation} \label{eq:size_of_distance_set_in_grid}
\sizeL_D(d)=\sum\limits_{k=0}^D \binom {D}{k} \binom{d+D-k}{D} \quad \left(\mbox{where $\binom{d+D-k}{k}=0$ if $d < k$}\right).
\end{equation}
So, in a greedy algorithm, $W_i$ can be obtained by examining at most $\sizeL_D(d)-1$ nodes and check whether they have already been colored.
Alternatively, a distance-$d$ coloring for regular $D$-dimensional lattices can also be obtained directly, due to the following result which we prove in Appendix~\ref{sec:appendix}.
%Instead of using a greedy algorithm, it is possible to give an explicit distance-$d$ coloring for regular $D$-dimensional lattices. 

\begin{figure}%[htb]
    \centering
    \begin{minipage}[t]{0.47\linewidth}
        \centering
        \includegraphics[width=.75\linewidth]{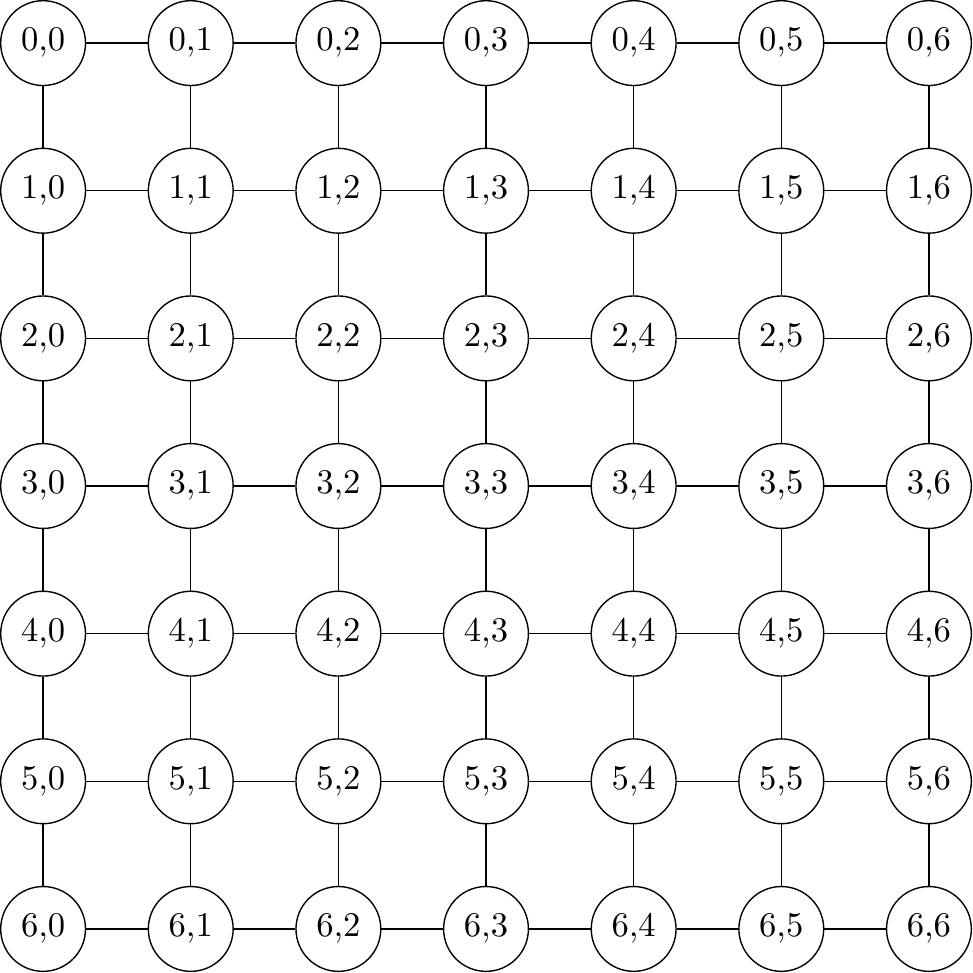}
        \caption{Two-dimesional $7\times7$ lattice, where each node is defined by two coordinates $0\leq w_1,w_2 \leq 6$. }
        \label{fig:graph_lattice}
    \end{minipage}
    \hfill
    \begin{minipage}[t]{0.47\linewidth}
        \centering
        \includegraphics[width=.75\linewidth]{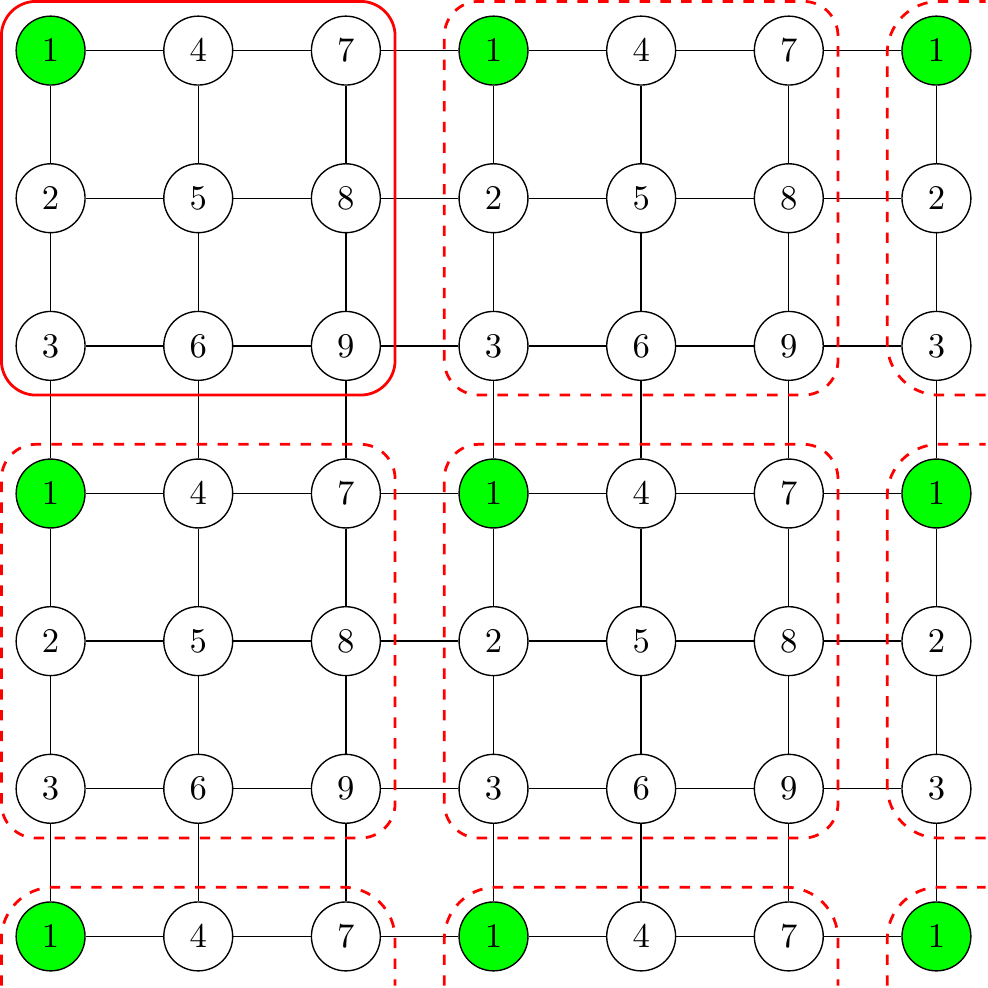}
        \caption{Distance-2 coloring produced by Theorem \ref{the:lattice_col}.}
        \label{fig:graph_lattice_col}
    \end{minipage}% 
\end{figure}

%For general $D$-dimensional lattices we have the following result, which we prove in Appendix~\ref{sec:appendix}.

\begin{theorem}\label{the:lattice_col}
 Let $G=(V,E)$ be a $D$-dimensional $N_1 \times N_2 \review{\times} \cdots \times N_D$ lattice. Let any node $w\in V$ be defined by its coordinates $w=(w^{[1]},\ldots,w^{[D]})$, with $0 \leq w^{[i]} \leq N_i-1$, $i\in 1,\ldots, D$. Then a distance-$d$ coloring with $(d+1)^D$ colors is given by
 \begin{equation}\label{eq:coloring_lattice}
 \col(w)= \left (\sum\limits_{k=0}^{D-1} \widetilde{w^{[k]}} (d+1)^k \right ) + 1,
 \quad \text{where} \quad
  \widetilde{w^{[k]}} = w^{[k]} \bmod (d+1).
 \end{equation}
\end{theorem}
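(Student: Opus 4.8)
# Proof Proposal for Theorem~\ref{the:lattice_col}

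The plan is to verify directly that the proposed coloring~\eqref{eq:coloring_lattice} is a valid distance-$d$ coloring, i.e., that $\col(v) = \col(w)$ forces $\dist(v,w) > d$ for distinct nodes $v,w$. Since on the lattice $\dist(v,w) = \|v-w\|_1 = \sum_{k} |v^{[k]} - w^{[k]}|$, the contrapositive I will establish is: if $0 < \|v - w\|_1 \leq d$, then $\col(v) \neq \col(w)$. The count of colors is immediate: each of the $D$ digits $\widetilde{w^{[k]}}$ ranges over $\{0, 1, \ldots, d\}$, so the formula~\eqref{eq:coloring_lattice} is (one plus) a $D$-digit number in base $(d+1)$, giving exactly $(d+1)^D$ possible color values.

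The key observation driving the injectivity argument is that $\col(v) = \col(w)$ is equivalent, by uniqueness of base-$(d+1)$ representation of the integer $\sum_k \widetilde{w^{[k]}}(d+1)^k \in \{0, \ldots, (d+1)^D - 1\}$, to the condition that $\widetilde{v^{[k]}} = \widetilde{w^{[k]}}$ for every $k \in \{0, \ldots, D-1\}$. This in turn means $v^{[k]} \equiv w^{[k]} \pmod{d+1}$ for each coordinate $k$, i.e., $|v^{[k]} - w^{[k]}|$ is a multiple of $d+1$ for every $k$. First I would record this equivalence carefully, since it is the structural heart of the proof.

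From here the distance estimate is short. Suppose $v \neq w$ but $\col(v) = \col(w)$. Then there is at least one coordinate $k$ with $v^{[k]} \neq w^{[k]}$, and for that coordinate $|v^{[k]} - w^{[k]}|$ is a positive multiple of $d+1$, hence $|v^{[k]} - w^{[k]}| \geq d+1$. Therefore
\begin{equation*}
\dist(v,w) = \sum_{k=0}^{D-1} |v^{[k]} - w^{[k]}| \geq |v^{[k]} - w^{[k]}| \geq d+1 > d,
\end{equation*}
which shows that two distinct nodes sharing a color are always more than distance $d$ apart, as required.

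I do not expect any genuine obstacle here; the result is essentially a base-$(d+1)$ encoding argument. The only point demanding minor care is the passage from equality of the reduced digits $\widetilde{v^{[k]}} = \widetilde{w^{[k]}}$ to the congruence and then to the bound $|v^{[k]} - w^{[k]}| \geq d+1$ on the \emph{differing} coordinate — one must note that the difference being a \emph{nonzero} multiple of $d+1$ is what yields the strict inequality, and that it suffices to find a single such coordinate rather than controlling all of them. It is also worth remarking that the bound on the number of colors uses only the range of each digit and does not require the lattice dimensions $N_i$ to exceed $d+1$; when some $N_i \leq d+1$ the coloring remains valid but may use fewer than the full $(d+1)^D$ colors, so the stated count is an upper bound that is attained once every $N_i \geq d+1$.
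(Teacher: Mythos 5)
Your proposal is correct and follows essentially the same route as the paper's own proof: counting colors via the base-$(d+1)$ digit representation, deducing from equal colors that all reduced coordinates agree (the paper phrases this via $w^{[k]}=(d+1)a+\widetilde{w^{[k]}}$ rather than congruences, but it is the same fact), and then bounding $\dist(v,w)=\|v-w\|_1$ from below by the single differing coordinate, whose difference is a nonzero multiple of $d+1$. Your explicit appeal to uniqueness of the base-$(d+1)$ representation makes a step precise that the paper leaves implicit, but the argument is otherwise identical.
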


%The theorem states a general result for regular $D$-dimensional lattices which, 
Let us note that for $D=2$, an optimal distance-$d$ coloring is explicitly known with $\left \lceil  \frac 12(d+1)^2 \right \rceil$ colors; see~\cite{FertinGodardRaspaud2003}, while the coloring given in Theorem~\ref{the:lattice_col} needs approximately twice as many colors. 

Two characteristics of the coloring of Theorem~\ref{the:lattice_col} for general $D$ will further be exploited in the error analysis presented in Sections~\ref{sec:sparseapprox} and~\ref{sec:trace}: Firstly, the construction is based on the fact that we color all nodes $w$ in the cube $\{w: 0 \leq w^{[k]} \leq d \mbox{ for } k=1,\ldots,D\}$ with $(d+1)^D$ colors as illustrated in Figure~\ref{fig:graph_lattice_col} (red, solid \review{frame}). This coloring is then repeated by shifting this initial cube through the entire lattice (red, dashed \review{frames}).
Secondly, with this coloring every color class can be interpreted as representing a coarse grid, where the distances between the nodes in one color class are multiples of $d+1$. This is illustrated in Figure~\ref{fig:graph_lattice_col} where the green, \review{(filled)} nodes represent one color class. 
%This information on the distances of the nodes within one color class will be exploited in the error analysis presented in Section~\ref{sec:sparseapprox} and~\ref{sec:trace}.
\begin{remark}\label{rem:hierarchical_coloring}
For $D$-dimensional lattices with an equal number of nodes in each dimension, a recursively computable \emph{hierarchical distance-$d$ coloring} %was recursively computed 
was introduced in \cite{Stathopoulos2013} for distances $d=2^i, \, i=0,1,\ldots$, using $2^{Di+1}=2d^D$ colors. This approach was recently extended to lattices with an uneven number of nodes per dimension and even more general graphs in~\cite{LaeuchliStathopoulos2020}. Note that for $D$ small and $d$ not too small, we have $(d+1)^D < 2d^D$. For example, $(d+1)^2 < 2d^2$ as soon as $d > 2$ and $(d+1)^3 < 2d^3$ as soon as $d >3$.
% Such a distance-$2^i$ coloring is obtained by coloring sublattices generated by a distance-$2^{i-1}$ coloring of the lattice, starting with a classical red-black ordering of the nodes which represents the coloring for $i=0$. 
For the analysis in Section~\ref{sec:sparseapprox} and~\ref{sec:trace}, the colorings discussed in the present paper are more appropriate and the analysis of the hierarchical probing approach is beyond the scope of this work.
\end{remark}

 We end the discussion of regular lattices with the following result which bounds the number of nodes that have exact distance $d$ from a given node. The rather technical, combinatorial proof is presented in Appendix~\ref{sec:appendix}.

\begin{lemma}\label{lemma:levelsets}
 Let $\sizeL_D^=(d) := |\{z\in \Z^D: \|z\|_1 = d\}|,$ then $\sizeL_D^=(d)\leq 2Dd^{D-1}$.
\end{lemma}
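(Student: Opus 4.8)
The plan is to compute $\sizeL_D^=(d)$ essentially exactly and then bound the resulting combinatorial quantity by induction on the dimension $D$; throughout I assume $d \geq 1$ (for $d=0$ the set is the single point $0$). The natural starting point is a recursion obtained by conditioning on the last coordinate: if $z = (z^{[1]},\ldots,z^{[D]}) \in \Z^D$ has $\|z\|_1 = d$ and $z^{[D]} = t$, then $(z^{[1]},\ldots,z^{[D-1]})$ lies on the $L^1$-sphere of radius $d-|t|$ in $\Z^{D-1}$. Summing over $t \in \{-d,\ldots,d\}$ and using that the terms $t$ and $-t$ contribute equally, I obtain
\begin{equation}
\sizeL_D^=(d) = \sizeL_{D-1}^=(d) + 2\sum_{s=0}^{d-1} \sizeL_{D-1}^=(s) = \sizeL_{D-1}^=(d) + 2\,\sizeL_{D-1}(d-1),
\end{equation}
where $\sizeL_{D-1}(d-1)$ is the number of lattice points in the $L^1$-ball of radius $d-1$ in $\Z^{D-1}$. (Equivalently one can write down the closed form $\sizeL_D^=(d) = \sum_{k=1}^{\min(D,d)} \binom{D}{k} 2^k \binom{d-1}{k-1}$ by choosing the $k$ nonzero coordinates, their signs, and a composition of $d$, or derive it as $\sizeL_D(d)-\sizeL_D(d-1)$ from \eqref{eq:size_of_distance_set_in_grid}; I will use the recursion as it feeds the induction directly.)

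For the induction, the base case $D=1$ gives $\sizeL_1^=(d) = 2 = 2Dd^{D-1}$, which holds with \emph{equality}. For the inductive step I apply the hypothesis in dimension $D-1$ both to the first summand, $\sizeL_{D-1}^=(d) \leq 2(D-1)d^{D-2}$, and to every shell inside the ball term, $\sizeL_{D-1}^=(s) \leq 2(D-1)s^{D-2}$ for $s \geq 1$, together with $\sizeL_{D-1}^=(0)=1$. This reduces the ball count to a power sum,
\begin{equation}
\sizeL_{D-1}(d-1) \leq 1 + 2(D-1)\sum_{s=1}^{d-1} s^{D-2}.
\end{equation}

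The remaining work is then an elementary but careful estimate. Using $s^{p} \leq \int_s^{s+1} x^{p}\,\dist x$ gives the clean power-sum bound $\sum_{s=1}^{d-1} s^{D-2} \leq \int_1^{d} x^{D-2}\,\dist x = (d^{D-1}-1)/(D-1)$. Substituting everything into the recursion, the $d^{D-2}$ contributions cancel up to lower order and the desired inequality $\sizeL_D^=(d) \leq 2Dd^{D-1}$ collapses to showing that $g(d) := (D-2)d^{D-1} - (D-1)d^{D-2} + 1 \geq 0$ for $d \geq 1$. This holds because $g(1)=0$ and $g'(d) = (D-1)(D-2)d^{D-3}(d-1) \geq 0$, so $g$ is nondecreasing on $[1,\infty)$. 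I expect the main (though modest) obstacle to be exactly this bookkeeping of lower-order terms: since the target bound is attained with \emph{equality} for $D\in\{1,2\}$ and at $d=1$, any crude estimate of $\sizeL_{D-1}(d-1)$ — for instance the box bound $(2d-1)^{D-1}$ or a cross-polytope volume comparison — is too lossy and already fails the case $D=2$, so the power-sum estimate and the sign of $g$ must be tracked sharply rather than asymptotically.
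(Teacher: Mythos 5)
Your proof is correct, and it takes a genuinely different route from the paper's. The paper starts from the closed ball-count formula \eqref{eq:size_of_distance_set_in_grid} (taken from the literature), writes $\sizeL_D^=(d)=\sizeL_D(d)-\sizeL_D(d-1)=\sum_{k=0}^D\binom{D}{k}\binom{d+D-k-1}{D-1}$, converts this by a double-counting argument into $\sum_{k=0}^{D-1}\binom{D}{k}\binom{d-1}{D-1-k}2^{D-k}$ (the same closed form you mention only parenthetically), and then estimates that sum directly via binomial-coefficient manipulations and the binomial theorem. You instead derive the recursion $\sizeL_D^=(d)=\sizeL_{D-1}^=(d)+2\,\sizeL_{D-1}(d-1)$ by conditioning on the last coordinate and induct on $D$: the recursion is right, the base case $D=1$ is an equality, the integral comparison $\sum_{s=1}^{d-1}s^{D-2}\le (d^{D-1}-1)/(D-1)$ is valid for $D\ge 2$, and the final reduction to $g(d)=(D-2)d^{D-1}-(D-1)d^{D-2}+1\ge 0$ with $g(1)=0$, $g'\ge 0$ closes the step; for $D=2$ your whole chain degenerates to equalities, consistent with $\sizeL_2^=(d)=4d$. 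Your approach buys self-containedness (no external lattice-point formula, no double-counting identity) and makes the tightness of the constant at $D\in\{1,2\}$ and $d=1$ transparent; the paper's approach buys the explicit closed form, which it then reuses in the appendix remark to obtain the sharp asymptotics $\sizeL_D^=(d)\sim \frac{2^D}{(D-1)!}\,d^{D-1}$, something your induction does not directly expose. One small inaccuracy in your closing comment, immaterial to the proof: the box bound $\sizeL_{D-1}(d-1)\le (2d-1)^{D-1}$ is actually \emph{exact} for $D=2$ (a one-dimensional $\ell^1$-ball is an interval), so it first breaks the induction at $D=3$, where it yields $8d^2-4d+2\not\le 6d^2$ for $d\ge 2$; since your argument never uses that bound, nothing is affected.
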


\section{Sparse approximation of matrix functions}\label{sec:sparseapprox}

In this section we analyze the error of the approximation~\eqref{eq:probing_sparse_approximation} when one of the colorings from Section~\ref{sec:coloring} is used. Before doing so, we first discuss some general results on the existence and quality of sparse approximations to reveal what is achievable at all.   

\subsection{General results on sparse approximations} %for $f(A)$}
\label{subsec:sparse_approx_theory}
We place ourselves in a slightly broader context, as it was also done in \cite{BenziBoitoRazouk2013}, and formulate sparse approximation results in terms of a matrix $B \in \Cnn$ (instead of $f(A)$) with a decay property with respect to a general graph $G=(V,E)$ with $V=\{1,\ldots,n\}$ (instead of $G(A))$).
The following essential result from \cite{BenziBoitoRazouk2013} forms the basis for sparse approximations of matrices with exponential decay.

\begin{theorem}\label{the:Benzi2013}
Let $\{B_s\}_{s \in \mathcal{S}}$ be a family of $n_s\times n_s$ matrices having exponential decay with respect to a family of corresponding graphs $\{G_s\}_{s \in \mathcal{S}}$ with geodesic distances $\dist_s$,
\begin{equation*} \label{eq:general_decay}
|[B_s]_{ij}| \leq Cq^{\dist_s(i,j)}, \enspace i,j=1\ldots,n,  
\end{equation*}
with $C> 0, q \in (0,1)$ independent of $s$. 
Assume that the graphs have bounded maximal degree $\Delta(G_s)\leq c$ for all $s$. Then for every $\varepsilon>0$, $B_s$ contains at most $\OO(n_s)$ entries greater than $\varepsilon$ in magnitude.
\end{theorem}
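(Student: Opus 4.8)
The plan is to convert the analytic decay hypothesis into a combinatorial counting problem, the key being that the relevant constants can be chosen uniformly over the whole family. First I would observe that a large entry forces a small geodesic distance: if $|[B_s]_{ij}| > \varepsilon$, then the decay bound gives $\varepsilon < |[B_s]_{ij}| \le Cq^{\dist_s(i,j)}$, so $q^{\dist_s(i,j)} > \varepsilon/C$. Since $0 < q < 1$ we have $\log(1/q) > 0$, and taking logarithms yields
\[
\dist_s(i,j) < \frac{\log(C/\varepsilon)}{\log(1/q)} =: d_\varepsilon .
\]
The decisive point is that $C$ and $q$ are independent of $s$, so the threshold $d_\varepsilon$, and hence the integer $d := \lfloor d_\varepsilon \rfloor$, depends only on $C$, $q$ and $\varepsilon$, never on the individual matrix. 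Consequently every entry of magnitude exceeding $\varepsilon$ is located at graph distance at most $d$ from its row index.

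Next I would fix a row index $i$ and count the column indices $j$ admissible by this distance constraint, i.e.\ the number of nodes of $G_s$ in the ball of radius $d$ around $i$. This is where the uniform degree bound $\Delta(G_s) \le c$ enters. Writing $N_k$ for the number of nodes at distance exactly $k$ from $i$, each such node lies on a shortest path through a node at distance $k-1$, so $N_k \le c\,N_{k-1}$ and therefore $N_k \le c^k$. Summing over $k$ gives
\[
|\{\, j : \dist_s(i,j) \le d \,\}| \;\le\; \sum_{k=0}^{d} c^k \;=:\; M(c,d),
\]
a finite constant depending only on $c$ and $d$, and in particular independent of $s$ and of the chosen row $i$.

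Combining the two observations finishes the argument: each of the $n_s$ rows of $B_s$ carries at most $M(c,d)$ entries of magnitude larger than $\varepsilon$, so their total number is at most $M(c,d)\,n_s = \OO(n_s)$, with the implied constant uniform across the family. I do not expect a genuine obstacle here; the only thing requiring care is to make explicit that both $d$ and $M(c,d)$ are extracted as $s$-independent constants, which is exactly what the $s$-uniformity of $C$, $q$ together with the common degree bound $c$ provide. (For a directed $G_s$ one simply counts out-neighbours in the recursion $N_k \le c\,N_{k-1}$, which changes nothing in the estimate.)
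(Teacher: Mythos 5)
Your proof is correct: the decay bound forces every entry of magnitude exceeding $\varepsilon$ to lie within graph distance $d_\varepsilon = \log(C/\varepsilon)/\log(1/q)$ of its row index (a threshold independent of $s$ since $C$ and $q$ are), and the degree bound $\Delta(G_s)\leq c$ caps the number of such column indices per row by $\sum_{k=0}^{d} c^k$, giving at most a constant times $n_s$ large entries in total. The paper states this theorem without proof, citing \cite{BenziBoitoRazouk2013}, and your argument is precisely the standard one used there, so it matches the intended proof in both structure and substance.
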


Furthermore, the following result for matrices with exponential {\em off-diagonal} decay was also given in \cite{BenziBoitoRazouk2013}.

\begin{theorem}\label{the:sparseApprox_banded}
 Let $\{B_s\}_{s \in \mathcal{S}}$  be a family of $n_s \times n_s$ matrices with %having exponential off-diagonal decay, i.e.,
 $$|[B_s]_{ij}| \leq C q^{|i-j|}, \enspace i,j=1\ldots,n, 
 $$
 %where $C$ and $q<1$ do not depend on $s$. 
with $C> 0, q \in (0,1)$ independent of $s$. Then for $\varepsilon >0$ there exists $\widetilde{m}$ independent of $s$ such that 
 $$ \|B_s - B_s^{(m)}  \|_1 < \varepsilon \enspace \mbox{for $m>\widetilde{m}$},
 $$
 where $B_s^{(m)} \in \mathbb{C}^{n_s \times n_s}$ is the banded matrix with  $[B_s^{(m)}]_{ij} = [B_s]_{ij}$ for $|i-j|\leq m$ and $[B_s^{(m)}]_{ij}=0$ for $|i-j|>m$. In particular, for any fixed $m > \widetilde{m}$ the matrices $B_s^{(m)}$ contain $\OO(n_s)$ nonzeros. 
\end{theorem}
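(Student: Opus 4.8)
The plan is to bound the error $\|B_s - B_s^{(m)}\|_1$ directly using the off-diagonal decay hypothesis and the fact that the $1$-norm of a matrix is the maximum absolute column sum. First I would observe that $B_s - B_s^{(m)}$ is precisely the matrix whose $(i,j)$ entry equals $[B_s]_{ij}$ when $|i-j| > m$ and vanishes otherwise; thus each surviving entry satisfies $|[B_s]_{ij}| \leq Cq^{|i-j|} \leq Cq^{m+1}$. I would then write
\begin{equation*}
\|B_s - B_s^{(m)}\|_1 = \max_{1 \leq j \leq n_s} \sum_{i : |i-j| > m} |[B_s]_{ij}| \leq \max_{1 \leq j \leq n_s} \sum_{i : |i-j| > m} C q^{|i-j|}.
\end{equation*}

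The key step is to control this column sum \emph{uniformly in $s$ and in the column index $j$}. The crucial point is that for any fixed $j$, the indices $i$ with a given value $|i-j| = k$ number at most two, so the inner sum is dominated by the tail of a geometric series that does not depend on $n_s$ or on $j$:
\begin{equation*}
\sum_{i : |i-j| > m} C q^{|i-j|} \leq 2 \sum_{k = m+1}^{\infty} C q^{k} = \frac{2C q^{m+1}}{1-q}.
\end{equation*}
This bound is independent of both $s$ and $j$, which is exactly what is needed for a uniform conclusion. Since $q \in (0,1)$, the right-hand side tends to $0$ as $m \to \infty$, so given $\varepsilon > 0$ I can choose $\widetilde{m}$ large enough that $\frac{2C q^{m+1}}{1-q} < \varepsilon$ for all $m > \widetilde{m}$, and this threshold depends only on $C$, $q$, and $\varepsilon$, not on $s$.

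Finally, for the nonzero count: for any fixed $m$, the banded matrix $B_s^{(m)}$ has at most $(2m+1)$ nonzeros in each of its $n_s$ columns, hence at most $(2m+1) n_s = \OO(n_s)$ nonzeros in total, with the implied constant depending only on $m$. This establishes the $\OO(n_s)$ claim. The main obstacle here is conceptual rather than computational: one must be careful that the threshold $\widetilde{m}$ genuinely depends only on the decay constants $C, q$ and on $\varepsilon$, and not implicitly on $s$ through $n_s$ or through the column index. This uniformity is secured precisely because the geometric tail bound above is taken over an infinite range $k > m$, making it oblivious to the finite size $n_s$; the truncation to the actual finite range $|i-j| \leq n_s - 1$ can only decrease the sum, so the infinite bound is a valid uniform upper bound throughout.
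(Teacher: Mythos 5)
Your proof is correct. Note that the paper does not prove this theorem in isolation: it quotes it from the literature and later observes that it is subsumed by its general result, Theorem~\ref{the:sparseApprox_general}, as the special case $\alpha = 0$, since off-diagonal decay is decay with respect to a chain graph whose level sets $L^{(\delta)}(j) = \{i : |i-j| = \delta\}$ contain at most two elements --- precisely your observation that at most two indices $i$ satisfy $|i-j| = k$. Your argument is the direct specialization of that proof: the $1$-norm as a maximal column sum, grouping of entries by $|i-j|$, a geometric tail bound uniform in $s$ and $j$, and a per-column count of nonzeros. What your direct route buys is a cleaner and slightly sharper estimate: with no polynomial factor $\delta^{\alpha}$ to absorb, you avoid the paper's splitting $q^{\delta} = q^{\delta/2}\, q^{\delta/2}$ and obtain the tail $2Cq^{m+1}/(1-q)$ instead of a bound decaying only like $\sqrt{q}^{\,m+1}$. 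What the paper's route buys is generality: the level-set formulation covers graphs with polynomially growing level sets (e.g.\ regular $D$-dimensional lattices), where your purely geometric argument would not suffice because the number of indices at distance $k$ grows like $k^{D-1}$ rather than staying bounded.
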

%\comment{So wie wir ihn formulieren sagt der Satz nicht, dass man $A^{(m)}$ einfach durch Nullsetzen außerhalb des Bandes erhält. Ist doch so, oder? Und sollten wir das angeben?}

%This result is of great practical interest, as it reveals the existence of a banded approximation of matrices with exponential off-diagonal decay with only $\OO(n)$ entries and an error which does not depend on the matrix dimension. 
To obtain a generalization for matrices with general exponential (not necessarily off-diagonal) decay, we define the level sets of a node $j\in V$ in a graph $G = (V,E)$ with $|V| = n$ as % \comment{wir haben vorher auch immer ein allgemeines node set $V$ verwendet}
\begin{eqnarray*}
L^{(\delta)}(j) &:=& \{ i \in V, \, \dist(i,j) = \delta \}, \quad \delta = 0,\ldots,n-1, \\
L^{(\infty)}(j) &:=& \{ i \in V, \, \dist(i,j) = \infty \}.
\end{eqnarray*}

Note that for any node $j$ we have
$$V \, = \cup_{\delta=0}^{n-1} L^{(\delta)}(j)  \cup L^{(\infty)}(j),$$
since every node $i$ has either 
distance smaller than $n$ %a distance smaller than $n-1$ 
from $j$ or cannot be reached from $j$, in which case $i \in  L^{(\infty)}(j)$. With these notations we can give the following generalization of Theorem~\ref{the:sparseApprox_banded}.
\begin{theorem}\label{the:sparseApprox_general}
 Let $\{B_s\}_{s \in \mathcal{S}}$ be a family of $n_s\times n_s$ matrices having exponential decay 
 $$|[B_s]_{ij}|\leq C q^{\dist_s(i,j)}, \enspace i,j = 1,\ldots,n$$
 with respect to the distances $\dist_s$ in a family of graphs $\{G_s\}_{s \in \mathcal{S }}$, where $C>0, q\in(0,1)$ are independent of $s$. Furthermore, assume that for all nodes $j$  all level sets $L^{(\delta)}_s(j)$ of all graphs $G_s$ are polynomially bounded, i.e., we have
\begin{equation}\label{eq:polynomially_bounded_level_sets} 
|L^{(\delta)}_s(j)| \leq K \, \delta^\alpha
\end{equation}
with $K>0$ and $\alpha >0$, both independent of $s$ and $j$. For $m>0$ define the matrix $B_s^{(m)}$ via
\begin{equation*}\label{eq:sparse_approx_exact}
[B_s^{(m)}]_{ij} = \begin{cases}
                      [B_s]_{ij}  &\mbox{ if } \dist_s(i,j) \leq m \\
                      0  &\mbox{ otherwise.} 
                     \end{cases}
\end{equation*}
Then for $\varepsilon>0$ there exists $\widetilde{m}$ independent of $s$ such that $\|B_s-B_s^{(m)}\|_1 < \varepsilon$
for all $m > \widetilde{m}$. Moreover, for any fixed $m > \widetilde{m}$ the matrices $B_s^{(m)}$ contain $\OO(n_s)$ nonzeros.

\end{theorem}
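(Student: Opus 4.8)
The plan is to prove Theorem~\ref{the:sparseApprox_general} by bounding the $1$-norm of the truncation error $B_s - B_s^{(m)}$ via a column-wise estimate, using the polynomial bound on level sets to control how many entries survive at each distance. The $1$-norm of a matrix is the maximum over columns of the sum of absolute values of entries, so I first fix an arbitrary column index $j$ and estimate $\sum_i |[B_s - B_s^{(m)}]_{ij}|$. By construction, $[B_s - B_s^{(m)}]_{ij}$ is nonzero only when $\dist_s(i,j) > m$, so this column sum equals $\sum_{i:\,\dist_s(i,j) > m} |[B_s]_{ij}|$. Grouping the summation by distance $\delta$, this is $\sum_{\delta = m+1}^{n_s - 1} \sum_{i \in L_s^{(\delta)}(j)} |[B_s]_{ij}|$. (Entries with $\dist_s(i,j) = \infty$ contribute nothing, since those are already zero in $B_s$ by the decay bound with $q^\infty = 0$, or can be excluded directly.)

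The key step is to combine the exponential decay bound with the polynomial level-set bound. For $i \in L_s^{(\delta)}(j)$ we have $|[B_s]_{ij}| \leq C q^\delta$, and there are at most $K\delta^\alpha$ such indices $i$ by~\eqref{eq:polynomially_bounded_level_sets}. Hence each inner sum is at most $CK\delta^\alpha q^\delta$, and the column sum is bounded by
\begin{equation}\label{eq:tail_bound_plan}
\sum_{i:\,\dist_s(i,j)>m} |[B_s]_{ij}| \leq CK \sum_{\delta=m+1}^{\infty} \delta^\alpha q^\delta.
\end{equation}
Crucially, the right-hand side of~\eqref{eq:tail_bound_plan} is independent of $j$ and of $s$. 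Since $0 \leq q < 1$, the series $\sum_{\delta=0}^\infty \delta^\alpha q^\delta$ converges (by the ratio test, as $\delta^\alpha q^\delta$ decays geometrically), so its tail $\sum_{\delta=m+1}^\infty \delta^\alpha q^\delta$ tends to $0$ as $m \to \infty$. Therefore, given $\varepsilon > 0$, I can choose $\widetilde{m}$ so large that $CK \sum_{\delta=\widetilde{m}+1}^\infty \delta^\alpha q^\delta < \varepsilon$; this $\widetilde{m}$ depends only on $C, K, \alpha, q$ and $\varepsilon$, hence is independent of $s$. Taking the maximum over columns $j$ yields $\|B_s - B_s^{(m)}\|_1 < \varepsilon$ for all $m > \widetilde{m}$, uniformly in $s$.

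For the final sparsity claim, I fix $m > \widetilde{m}$ and count the nonzeros of $B_s^{(m)}$. Each column $j$ has nonzero entries only at indices $i$ with $\dist_s(i,j) \leq m$, and the number of such indices is $\sum_{\delta=0}^m |L_s^{(\delta)}(j)| \leq \sum_{\delta=0}^m K\delta^\alpha \leq K(m+1)^{\alpha+1}$, a constant independent of $s$ (and of $j$) for fixed $m$. Summing over the $n_s$ columns gives at most $K(m+1)^{\alpha+1} n_s = \OO(n_s)$ nonzeros.

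The main obstacle is essentially bookkeeping rather than a deep difficulty: one must verify that the convergence of the tail in~\eqref{eq:tail_bound_plan} is genuinely uniform in $s$, which hinges on the hypothesis that $C$, $K$, $\alpha$, and $q$ are all independent of $s$. The only mild subtlety is the boundary case $q = 0$ (allowed since $0 \leq q < 1$), where one interprets $q^\delta$ as $0$ for $\delta \geq 1$ and the truncation is exact for any $m \geq 0$; the argument then holds trivially. No issue arises from the infinite-distance level sets, since those entries vanish in $B_s$ by the decay assumption.
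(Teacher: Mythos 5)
Your proof is correct and follows essentially the same route as the paper: both estimate the 1-norm column by column, group the tail $\sum_{i:\,\dist_s(i,j)>m}|[B_s]_{ij}|$ by level sets, and combine the decay bound with \eqref{eq:polynomially_bounded_level_sets} to reduce everything to the tail of the convergent series $CK\sum_{\delta>m}\delta^\alpha q^\delta$ (the paper merely makes this tail explicit via the splitting $q^\delta = q^{\delta/2}q^{\delta/2}$ and $\delta^\alpha q^{\delta/2}<1$ for large $\delta$, where you invoke convergence by the ratio test, and both nonzero counts are identical in spirit). One cosmetic slip: in your sparsity count you apply $|L_s^{(\delta)}(j)|\leq K\delta^\alpha$ at $\delta=0$, where it fails since $|L_s^{(0)}(j)|=1$; as in the paper, count the diagonal separately, giving $n_s\bigl(1+\sum_{\delta=1}^m K\delta^\alpha\bigr)$ nonzeros, which is still $\OO(n_s)$.
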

\begin{proof}
Let $m_1=m_1(q,\alpha)$ be such that $\delta^\alpha q^{\frac \delta 2} <1$ holds for $\delta > m_1$. Then for $m > m_1$ we obtain

\begin{eqnarray*}
\|B_s - B_s^{(m)}\|_1 &=& \max_{j=1}^{n_s} \sum\limits_{i=1}^{n_s} \left | [B_s]_{ij} - [B_s^{(m)}]_{ij} \right | \, = \, \max_{j=1}^{n_s} \sum\limits_{\substack{i\\\dist_s(i,j)>m}}\left |[B_s]_{ij} \right | \\
&\leq& \max_{j=1}^{n_s} \! \sum\limits_{\substack{i\\ \dist_s(i,j)>m}} \! Cq^{\dist_s(i,j)} \, = \, \max_{j=1}^{n_s} \; C \!\sum\limits_{\delta=m+1}^{n_s-1} | L^{(\delta)}_s(j) | q^\delta \, \leq \,  CK \! \sum\limits_{\delta=m+1}^{n_s-1} \; \delta^\alpha q^\delta  \\ 
& =& CK \! \sum\limits_{\delta=m+1}^{\infty} \delta^\alpha q^{\frac \delta 2} q^{\frac \delta 2}  \, \leq \, CK \! \sum\limits_{\delta=m+1}^{\infty} q^{\frac \delta 2}  \, \leq \,  CK  \; \frac{\sqrt{q}^{ m+1}}{1-\sqrt{q}} \, .
\end{eqnarray*}

Let $m_2=m_2(q,\varepsilon)$ be such that 
$$CK  \; \frac{\sqrt{q}^{{m+1}}}{1-\sqrt{q}}<\varepsilon$$
holds for $m > m_2$. Then for $m > \widetilde{m} :=\max\{m_1,m_2\}$ we have $\|B_s-B_s^{(m)}\|_1 < \varepsilon$, and the number of \review{nonzero} elements in $B_s^{(m)}$ is at most 
$$
\sum_{j=1}^{n_s} \sum_{\delta=0}^m |L^{(\delta)}_s(j)| \leq  n_s \left( 1+ \sum_{\delta = 1}^m K\delta^{\alpha}\right ) = \OO(n_s),
$$
from which the assertion of the theorem follows.
\end{proof}

Note that off-diagonal decay is equivalent to decay with respect to a chain graph and thus Theorem~\ref{the:sparseApprox_banded} is covered by this theorem: For a chain the level sets $L^{\delta}(j)$ contain at most two elements, i.e., we have $\alpha = 0$. For general $\alpha>0$ we now have a similar result for other important cases, e.g., when the graphs $G_s$ are regular $D$-dimensional lattices. 

Theorem~\ref{the:sparseApprox_general} was formulated in \cite{BenziBoitoRazouk2013} with the assumption \eqref{eq:polynomially_bounded_level_sets} on polynomially bounded level sets replaced by the less restrictive assumption that the family of graphs $\{G_s\}$ has bounded maximal degree. This turns out to have been too optimistic, as the following example shows.

%Note that the condition on the sequence of graphs ${G_n}$ of Theorem~\ref{the:Benzi2013}, i.e., a bounded maximal degree, is not sufficient to formulate an analogous result. Such a claim was made in~\cite{BenziBoitoRazouk2013}, but the following example disproves this assertion.
\begin{example}
 Let $0< q < 1$, let $t \in \mathbb{N}$ be such that $tq > 1$ holds, and let $G_p$ be the full $t$-ary tree with height $p$, which has $n_p = 1 + t + \cdots + t^{p} = (t^{p+1}-1)/(t-1)$ nodes. Then the maximal degree of the graph $G_p$ is bounded, $\Delta(G_p) = t+1$. 
Let $j$ be the root of this tree so that the level set $L^{(\delta)}_p(j)$ is formed exactly by all nodes at depth $\delta$ in the tree, implying
\[
|L^{(\delta)}_p(j)| = t^{\delta},  \delta = 0,\ldots,p, \enspace  L^{(\infty)}_p(j) = \emptyset.
\]
Let $B_p$ be the $n_p \times n_p$ matrix with $[B_p]_{ij} = q^{\dist_p(i,j)}$, where $\dist_p$ is the distance in $G_p$. Then $B_p$ has exponential decay with respect to $G_p$, and for all $m$ we have 
%$$\| A_n - A_n^{[m]}\|_1 \geq \sum\limits_{i \in D_n^m(j)} | [A_n^{[m]}]_{ij} | = \sum\limits_{k = m+1}^p | L^{(k)}_n(j) | q^k = \sum\limits_{k = m+1}^p  t^k q^k \geq (p-m)(tq)^{m+1},$$
$$\| B_p - B_p^{(m)}\|_1 \geq \! \sum\limits_{\dist_p(i,j) > m } | [B_p^{(m)}]_{ij} | = \! \sum\limits_{\delta = m+1}^p | L^{(\delta)}_p(j) | q^\delta = \! \sum\limits_{\delta = m+1}^p  t^\delta q^\delta \geq (p-m)(tq)^{m+1},$$
where the last inequality holds because of $tq > 1$. Thus, the first $m$ for which $\| B_p - B_p^{(m)}\|_1 < 1$ holds is $m=p = \Omega(\log n)$, in which case we have $B_p^{(m)} = B_p$.
\end{example}

In this example the exponential decay in $B_p$ is not enough to compensate the exponential growth of the level sets. 
This motivated  condition~\eqref{eq:polynomially_bounded_level_sets} in Theorem~\ref{the:sparseApprox_general}.

\subsection{Analysis of probing for sparse approximation of $f(A)$}\label{subsec:sparse_approx_probing}
%In Section~\ref{subsec:sparse_approx_theory} we motivated the existence of an approximation of a decaying matrix functions $f(A)$ with $\OO(n)$ entries and an error which is independent of the matrix size for a large class of functions $f$ and matrices $A$. 
We now turn back to the specific situation where $B = f(A)$ and $G = G(A)$.
%\comment{Was ist $B_s$? Diese Notation haben wir bisher nicht benutzt. Hier wird es jetzt tatsächlich etwas haarig, notationstechnisch den richtigen Übergang zu finden, weil wir plötzlich zwei Matrizen zum Index s haben und nicht mehr nur eine...} 
The existence results of the previous section do not reveal how a sparse approximation is obtained in practice without computing $f(A)$. We now investigate the probing approximation \eqref{eq:probing_sparse_approximation} for obtaining such an approximation.

The following result gives an entrywise bound for the probing approximation $f(A)^{[d]}$ from \eqref{eq:probing_sparse_approximation} provided the probing vectors are obtained from a distance-$2d$ coloring of $|G(A)|$.
%Then, for a threshold $\varepsilon = Cq^{d}$ for some appropriate $d \in \mathbb{N}$}   
%we can determine a distance $d$ such that
%\begin{equation} \label{eq:leq_epsilon}
%|[f(A)]_{ij}| \leq \varepsilon \text{ for } \dist(i,j)>d.
%\end{equation}
%we choose a partitioning~\eqref{eq:node_partitioning} of the nodes according to a distance-$2d$ coloring of the {\em undirected} graph $|G(A)|$. 
%This gives us the color classes
%\begin{equation}\label{eq:partition}
%V_{\ell}=\{i\in V : \col(i)=\ell\},  \quad \ell=1,\ldots,m,
%\end{equation}
%and the corresponding probing vectors~\eqref{eq:probing_vectors}. We then obtain the following error bound for the sparse approximation~\eqref{eq:probing_sparse_approximation}, which requires the $m$ evaluations $f(A)v_\ell$, $\ell = 1,\ldots,m$. 

\begin{proposition}\label{prop:errorboundentry}
Let $f(A)$ have exponential decay \eqref{eq:generalDecaybound}, let the sets $V_\ell$ be the color classes of a distance-2d coloring of $|G(A)|$ and $v_{\ell}$ the corresponding probing vectors \eqref{eq:probing_vectors}. Let $f(A)^{[d]}$ be the approximation defined by \eqref{eq:probing_sparse_approximation}.
Then with $\varepsilon = Cq^d$  the following entrywise error bound holds for all $i,j \in \{1,\ldots,n\}$
\begin{equation*}\label{eq:sparseApprox_error_entrywise}
|[f(A)]_{ij} - [f(A)^{[d]}]_{ij}| \leq \begin{cases}
                            (|V_\ell|-1) \varepsilon  \textnormal{ for } j \in V_\ell &\textnormal{ if } \udist(i,j) \leq d, \\
                            \varepsilon &\textnormal{ if } \udist(i,j) > d. 
                           \end{cases}
\end{equation*}
\end{proposition}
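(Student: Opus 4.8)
The plan is to split the analysis according to the two cases in the definition of $f(A)^{[d]}$ from \eqref{eq:probing_sparse_approximation}, since the sparse approximation is built directly from the entries of the probing vectors $f(A)v_\ell$. The central mechanism to exploit is the following: for a fixed row index $i$ and a fixed color class $V_\ell$, the $i$th entry of the probing vector picks up the sum of an entire column block, namely $[f(A)v_\ell]_i = \sum_{k \in V_\ell} [f(A)]_{ik}$. Thus whenever $j \in V_\ell$ and we use $[f(A)v_\ell]_i$ as a proxy for the single entry $[f(A)]_{ij}$, the error is precisely the sum of the \emph{spurious} contributions $\sum_{k \in V_\ell,\, k \neq j} [f(A)]_{ik}$ coming from the other nodes sharing the color of $j$. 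The whole point of using a distance-$2d$ coloring is to force these spurious indices $k$ to be far from $i$ in $G(A)$, so that the decay bound \eqref{eq:generalDecaybound} makes each such term small.

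First I would treat the case $\udist(i,j) > d$, which is immediate: here $[f(A)^{[d]}]_{ij} = 0$ by definition, so the error equals $|[f(A)]_{ij}|$. Since $\dist(i,j) \geq \udist(i,j) > d$ (the geodesic distance in the directed graph is at least that in the undirected graph), the decay bound \eqref{eq:generalDecaybound} gives $|[f(A)]_{ij}| \leq Cq^{\dist(i,j)} \leq Cq^{d} = \varepsilon$, which is the claimed bound. Second, and this is the main case, I would treat $j \in V_\ell$ with $\udist(i,j) \leq d$, where $[f(A)^{[d]}]_{ij} = [f(A)v_\ell]_i$. Writing out the error as
\begin{equation*}
\bigl| [f(A)]_{ij} - [f(A)v_\ell]_i \bigr| = \Bigl| \sum_{\substack{k \in V_\ell \\ k \neq j}} [f(A)]_{ik} \Bigr| \leq \sum_{\substack{k \in V_\ell \\ k \neq j}} |[f(A)]_{ik}|,
\end{equation*}
I would then bound each summand using \eqref{eq:generalDecaybound}.

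The crux is to show each term $|[f(A)]_{ik}| \leq \varepsilon = Cq^d$, which requires $\dist(i,k) \geq d$ for every $k \in V_\ell$ with $k \neq j$. This is exactly where the distance-$2d$ coloring enters. Since $j,k \in V_\ell$ are distinct nodes of the same color class, the defining property of a distance-$2d$ coloring of $|G(A)|$ forces $\udist(j,k) > 2d$. Combining this with $\udist(i,j) \leq d$ via the triangle inequality for the undirected geodesic distance gives $\udist(i,k) \geq \udist(j,k) - \udist(i,j) > 2d - d = d$, and since $\dist(i,k) \geq \udist(i,k)$ we conclude $\dist(i,k) > d$, hence $|[f(A)]_{ik}| \leq Cq^{\dist(i,k)} \leq Cq^{d} = \varepsilon$. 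Summing over the $|V_\ell| - 1$ spurious indices yields the bound $(|V_\ell| - 1)\varepsilon$, completing this case. I expect the only subtlety to be the bookkeeping between directed and undirected distances: one must use $\dist \geq \udist$ when applying the decay bound, while the coloring and the triangle inequality are naturally stated for $\udist$ on $|G(A)|$. As long as these inequalities are invoked in the correct direction, the argument is a short chain of triangle-inequality and decay estimates with no genuine obstacle.
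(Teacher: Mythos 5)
Your proposal is correct and follows essentially the same route as the paper's proof: the trivial case $\udist(i,j)>d$ via $\dist\geq\udist$, and for $\udist(i,j)\leq d$ the identification of the error with the sum $\sum_{k\in V_\ell,\,k\neq j}[f(A)]_{ik}$, followed by the triangle inequality for $\udist$ and the distance-$2d$ coloring property to force $\dist(i,k)\geq\udist(i,k)>d$ for each spurious index $k$. The only cosmetic difference is that you apply the triangle inequality directly ($\udist(i,k)\geq\udist(j,k)-\udist(i,j)>d$) where the paper argues by contradiction; these are logically identical.
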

\begin{proof}
The assertion is trivial for $\udist(i,j) > d$, since then $\dist(i,j) \geq \udist(i,j) > d$ and $[f(A)^{[d]}]_{ij}= 0$ by \eqref{eq:probing_sparse_approximation}. For $i,j$ with $\udist(i,j)\leq d$ we have
 \begin{equation*}
 [f(A)^{[d]}]_{ij} =[f(A)v_\ell]_i = \sum\limits_{k\in V_\ell} [f(A)]_{ik}, \enspace \mbox{where $j\in V_\ell$. }
 \end{equation*}
 Thus,
 \begin{equation}\label{eq:entry}
  [f(A)^{[d]}]_{ij}-[f(A)]_{ij} =  \sum\limits_{\substack{k\in V_\ell \\ k\neq j}} [f(A)]_{ik}.
 \end{equation}
 If we had $\udist(i,k)\leq d$ for some $k\in V_\ell$ with $k\neq j$, then 
 \begin{equation}\label{eq:distancerelation}
  \udist(j,k) \leq \udist(j,i) + \udist(i,k) = \udist(i,j) + \udist(i,k) \leq 2d,
 \end{equation}
which is a contradiction to $j,k\in V_\ell$. Thus $\dist(i,k) \geq \udist(i,k) > d$, and therefore we have
 $$|[f(A)]_{ij}-[f(A)^{[d]}]_{ij}| \leq  \sum\limits_{\substack{k\in V_\ell \\ k\neq j}} \varepsilon = (|V_\ell|-1)\varepsilon,$$
which concludes the proof.
\end{proof}

Note that $\udist(i,j) = \udist(j,i)$ is crucial in \eqref{eq:distancerelation}, and that we do not necessarily have that $\dist(j,k) \leq \dist(i,j) + \dist(i,k)$ for the distances in the {\em directed} graph. This is why for a structurally non-symmetric matrix the proposition has to rely on a coloring of the undirected graph rather than the directed one. 
%there is no analogon to relation~\eqref{eq:distancerelation} for the directed distance, preventing us from obtaining} \comment{I have to work here!!}  in a atructurally unsyymetric grexplains why the undirected distance $\udist$ is necessary, as otherwise we cannot guarantee that $|[f(A)]_{ik}|\leq \varepsilon$ holds with  $k\in V_\ell, k\neq j$ for a structurally nonsymmetric matrix $A$.

Proposition~\ref{prop:errorboundentry} immediately implies bounds for the 1-, 2- and Frobenius norms.

\begin{corollary}\label{cor:bound_matrix_norms}
Let the assumptions of Proposition~\ref{prop:errorboundentry} hold and let $\gamma=\max_{\ell} |V_\ell|$. Then with $\varepsilon = Cq^d$ we have
\begin{equation}\label{eq:sparseApprox_error_frobenius}
\| f(A)-f(A)^{[d]} \|_2 \leq \| f(A)-f(A)^{[d]} \|_F \leq n(\gamma-1) \varepsilon
\end{equation}
and
\begin{equation}\label{eq:sparseApprox_error_1norm}
\| f(A)-f(A)^{[d]} \|_1 \leq n(\gamma-1) \varepsilon.
\end{equation}
\end{corollary}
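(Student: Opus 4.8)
The plan is to deduce the three matrix-norm bounds directly from the entrywise estimate in Proposition~\ref{prop:errorboundentry} by summing over rows or columns. Write $E := f(A) - f(A)^{[d]}$ for the error matrix, so that Proposition~\ref{prop:errorboundentry} gives $|[E]_{ij}| \leq (|V_\ell|-1)\varepsilon \leq (\gamma-1)\varepsilon$ whenever $\udist(i,j) \leq d$ (with $j \in V_\ell$), and $|[E]_{ij}| \leq \varepsilon \leq (\gamma-1)\varepsilon$ whenever $\udist(i,j) > d$. The key observation is that in \emph{both} cases each entry satisfies the uniform bound $|[E]_{ij}| \leq (\gamma-1)\varepsilon$, since $\gamma \geq |V_\ell| \geq 1$ forces $\gamma - 1 \geq |V_\ell|-1$ and $\gamma-1 \geq 1$ (assuming $\gamma \geq 2$; if $\gamma = 1$ the coloring is trivial and the approximation can only be exact up to the far-field term, so the bound still holds). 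This collapses the two-case estimate into a single entrywise bound, which is exactly what is needed to control the row- and column-sum norms.

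First I would establish the Frobenius bound. By definition $\|E\|_F^2 = \sum_{i,j} |[E]_{ij}|^2 \leq \sum_{i,j} (\gamma-1)^2\varepsilon^2 = n^2(\gamma-1)^2\varepsilon^2$, hence $\|E\|_F \leq n(\gamma-1)\varepsilon$. For the spectral norm I would simply invoke the standard inequality $\|E\|_2 \leq \|E\|_F$, which gives the left-hand chain of \eqref{eq:sparseApprox_error_frobenius}. For the $1$-norm I would use $\|E\|_1 = \max_j \sum_{i=1}^n |[E]_{ij}|$; applying the uniform entrywise bound to each of the $n$ summands yields $\sum_{i=1}^n |[E]_{ij}| \leq n(\gamma-1)\varepsilon$ for every column index $j$, and taking the maximum over $j$ gives \eqref{eq:sparseApprox_error_1norm}.

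There is no genuine obstacle here; the corollary is a routine consequence of the proposition. The only point requiring a moment's care is the unification of the two cases into the single bound $|[E]_{ij}| \leq (\gamma-1)\varepsilon$: the "far-field" entries only satisfy $|[E]_{ij}| \leq \varepsilon$, which is stronger than $(\gamma-1)\varepsilon$ precisely when $\gamma \geq 2$, so one is overestimating those entries to obtain a uniform constant. This overestimation is harmless for the stated bounds and is in fact what makes the summation trivial. A slightly sharper bound could be obtained by separately counting near-field versus far-field entries in each column, but since the number of near-field entries is not controlled in general (it depends on the graph geometry), the clean uniform bound $n(\gamma-1)\varepsilon$ is the natural and correct statement. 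I would therefore present the argument in the order Frobenius, then spectral (via $\|\cdot\|_2 \leq \|\cdot\|_F$), then $1$-norm, each following immediately from the collapsed entrywise estimate.
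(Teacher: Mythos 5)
Your proof is correct and is essentially the paper's own argument: the paper gives no separate proof, stating that Proposition~\ref{prop:errorboundentry} ``immediately implies'' the corollary, and the intended route is exactly yours---collapse the two cases into the uniform entrywise bound $(\gamma-1)\varepsilon$, sum over all $n^2$ entries for the Frobenius norm, use $\|\cdot\|_2 \leq \|\cdot\|_F$, and sum each column for the $1$-norm. One small caveat: your parenthetical claim that the bound ``still holds'' when $\gamma=1$ is not accurate, since in that degenerate case the far-field entries of $f(A)$ are truncated to zero but need not vanish, so the right-hand side $n(\gamma-1)\varepsilon=0$ can be violated; this is an edge case in the corollary's statement itself (implicitly excluded, as probing with all-singleton color classes is pointless) rather than a defect of your argument.
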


For a family of matrices $\{A_s\}_{s \in \mathcal{S}}, A_s \in \C^{n_s \times n_s}$ with uniform exponential decay \eqref{eq:generalDecaybound}, the bounds in \eqref{eq:sparseApprox_error_frobenius} and \eqref{eq:sparseApprox_error_1norm} are at least of order $\OO(n_s\varepsilon)$. If, similarly to Theorem~\ref{the:sparseApprox_general}, we assume that the level sets are polynomially bounded, the bound for the 1-norm can be made independent of $n_s$.

\begin{theorem}\label{the:error_bound_polynomially_bounded}
 Let $\{A_s\}_{s \in \mathcal{S}}$ be a family of $n_s \times n_s$ matrices such that $f(A_s)$ has uniform exponential decay \eqref{eq:generalDecaybound}.
 % with respect to the graphs $G(A_s)$. 
 %away from the sparsity pattern of $A_s$, i.e.\ there is a constant $C$ and a decay rate $q<1$, both independent of $s$, such that
%
%|[f(A_s)]_{ij}| \leq Cq^{\dist_s(i,j)}.
%$$ 
Assume that the sizes of the level sets $L_s^{(\delta)}(j)$ of the undirected graphs $|G(A_s)|$ satisfy
%of node $j$ in $|G(A_s)|$ we have
 $$
 |L_s^{(\delta)}(j)| \leq K \, \delta^\alpha \enspace \mbox{for all nodes $j=1,\ldots,n_s$}
 $$
 %, where $L_s^{(\delta)}(j) = \{ i: 1\leq i \leq n, \udist_s(i,j) = \delta \}$.
 %i.e., there is a polynomial bound for the size of the (undirected) level sets for each node in $G(A)$.
 and let $f(A_s)^{[d]}$ be the approximation defined by \eqref{eq:probing_sparse_approximation} with probing vectors resulting from a distance $2d$-coloring of $|G(A_s)|$. Then with $\varepsilon = Cq^d$ there exists $\widetilde{d}$ independent of $s$ such that for $d \geq \widetilde{d}$ we have
 $$\|f(A_s)-f(A_s)^{[d]}\|_1 \leq \varepsilon \mbox{ for all $s \in \mathcal{S}$ }.
 $$
\end{theorem}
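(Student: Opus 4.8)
The plan is to bound the $1$-norm $\|E_s\|_1 = \max_j \sum_i |[E_s]_{ij}|$ of the error matrix $E_s := f(A_s) - f(A_s)^{[d]}$ by controlling a single column sum, following the near/far split already present in Proposition~\ref{prop:errorboundentry}. Fixing a column index $j$ with $j \in V_\ell$, I would separate the rows $i$ into the \emph{far} part $\{i : \udist(i,j) > d\}$, where $[f(A_s)^{[d]}]_{ij} = 0$ and the entry is simply truncated, and the \emph{near} part $\{i : \udist(i,j) \le d\}$, where the error stems from the ``collisions'' $\sum_{k \in V_\ell,\, k \ne j}[f(A_s)]_{ik}$ inside the color class.

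For the far part I would reuse the mechanism from the proof of Theorem~\ref{the:sparseApprox_general}: since $\udist \le \dist$ and $q<1$, each far entry satisfies $|[f(A_s)]_{ij}| \le Cq^{\udist(i,j)}$, and grouping the rows by the undirected level sets of $j$ together with the hypothesis $|L_s^{(\delta)}(j)| \le K\delta^\alpha$ yields
$$\sum_{i : \udist(i,j) > d} |[E_s]_{ij}| \le CK \sum_{\delta = d+1}^{\infty} \delta^\alpha q^\delta.$$
The near part is where the distance-$2d$ coloring really enters. For each near row $i$, the proof of Proposition~\ref{prop:errorboundentry} shows that every collision partner $k \in V_\ell \setminus \{j\}$ has $\udist(i,k) > d$; grouping these $k$ by the level sets of $i$ and again invoking $|L_s^{(\delta)}(i)| \le K\delta^\alpha$ gives, for each such $i$, the same tail $CK\sum_{\delta=d+1}^{\infty}\delta^\alpha q^\delta$. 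Since the number of near rows is at most $\sum_{\delta=0}^d |L_s^{(\delta)}(j)| \le 1 + K\sum_{\delta=1}^d \delta^\alpha = \OO(d^{\alpha+1})$, summing over the near rows multiplies that tail by this polynomial factor.

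Combining both parts, I would bound the tail itself by factoring out $q^d$: writing $\delta = d+u$ and using $(d+u)^\alpha \le d^\alpha(1+u)^\alpha$ for $d \ge 1$ gives $\sum_{\delta > d}\delta^\alpha q^\delta \le d^\alpha q^d \sum_{u \ge 1}(1+u)^\alpha q^u$, where the remaining series converges to an $s$- and $d$-independent constant. The resulting column-sum bound therefore has the form $\tilde{C}\,P(d)\,q^d$ with $P$ a fixed polynomial (of degree $\approx 2\alpha+1$, coming from the near-row count times the tail) and $\tilde C$ depending only on $C, K, q, \alpha$; crucially, nothing depends on $s$ or on the particular column $j$. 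This matches the exponential order of the entrywise scale $\varepsilon = Cq^d$ from Proposition~\ref{prop:errorboundentry} up to the polynomial prefactor. Since $P(d)q^d \to 0$ as $d \to \infty$, there is a threshold $\widetilde d$, independent of $s$, beyond which the column-sum bound—and hence $\|f(A_s) - f(A_s)^{[d]}\|_1$—stays below $\varepsilon$ for every $s \in \mathcal{S}$.

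I expect the near part to be the main obstacle: it has no analogue in Theorem~\ref{the:sparseApprox_general}, and the danger is that the extra factor $\OO(d^{\alpha+1})$ from the count of near rows could spoil decay. The structural fact that rescues it is precisely the one isolated in Proposition~\ref{prop:errorboundentry}—that a distance-$2d$ coloring forces every collision partner to lie at undirected distance $> d$ from $i$—so that each near row contributes only an exponentially small tail and the polynomial prefactor remains harmless against $q^d$.
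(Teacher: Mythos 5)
Your proposal is correct and follows essentially the same route as the paper's own proof: the identical near/far column split based on \eqref{eq:entry} and the distance-$2d$ collision argument of Proposition~\ref{prop:errorboundentry}, the same grouping by level sets with $|L_s^{(\delta)}(j)|\leq K\delta^\alpha$, and the same $\OO(d^{\alpha+1})$ count of near rows multiplying the tail $\sum_{\delta>d}\delta^\alpha q^\delta$, exactly as in \eqref{eq:errorboundind}. The only deviation is your tail estimate, where you factor out $q^d$ via $(d+u)^\alpha\leq d^\alpha(1+u)^\alpha$ instead of the paper's $\delta^\alpha q^{\delta/2}<1$ trick inherited from Theorem~\ref{the:sparseApprox_general}; this is a minor variant that in fact yields the slightly sharper bound $P(d)\,q^{d}$ in place of the paper's $\bigl(2+\sum_{\rho=1}^{d}K\rho^\alpha\bigr)\,\widehat{C}\,q^{(d+1)/2}$.
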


%\comment{Ich fand diesen Beweis sehr schwer verständlich, vor allem, weil wir denselben Summationsindex $i$ einmal für die Knoten und dann aber für die level sets verwenden. Zudem ging  es mir zu kurz, wie man in der Doppelsumme die polynomial bounds eingesetzt hat; auch da gab es das Problem mit demselben Index für Knoten und levels sets. Als Konsequenz haben jetzt level sets immer griechische Indizes...} 

\begin{proof}
%Since $f(A)$ has an exponential decay property, there is a constant $C$ and a decay rate $q<1$, both independent of $n$, such that we have
%$$|[f(A)]_{ij}| \leq Cq^{\dist(i,j)}.$$
For every $d>0$ we have 
\begin{eqnarray}
 \lefteqn{\hspace*{-3em} \|f(A_s)-f(A_s)^{[d]}\|_1 \, = \, \max_{j=1}^{n_s} \sum\limits_{i=1}^{n_s} |[f(A_s)]_{ij} - [f(A_s)^{[d]}]_{ij}|} \nonumber \\
  &\leq& \max_{j=1}^{n_s} \Big(\sum\limits_{\substack{i \\ \udist_s(i,j)>d}} |[f(A)]_{ij}| + \sum\limits_{\substack{i \\ \udist_s(i,j)\leq d}} \sum\limits_{\substack{k\in V_{\ell(j)} \\ k\neq j}} |[f(A)]_{ik}| \Big) \nonumber \qquad (j \in V_{\ell(j)}) \\
 &\leq&  \max_{j=1}^n \Big(\sum\limits_{\delta=d+1}^{n_s-1} |L_s^{(\delta)}(j)| C q ^ \delta +  \sum\limits_{\substack{i \\ \udist_s(i,j)\leq d}} \sum\limits_{\delta=d+1}^{n_s-1} |L_s^{(\delta)}(i)| C q^{\delta} \Big) \nonumber \\ %\label{eq:errorboundind} \\
  &\leq&  \max_{j=1}^n \Big(\sum\limits_{\delta=d+1}^{n_s-1} K \delta^{\alpha} C q ^ \delta +  \sum\limits_{\substack{i \\ \udist_s(i,j)\leq d}} \sum\limits_{\delta=d+1}^{n_s-1} K\delta^{\alpha} C q^{\delta} \Big) \nonumber \\ %\label{eq:errorboundind} \\
   &=&  \max_{j=1}^n \Big(\sum\limits_{\delta=d+1}^{n_s-1} K \delta^{\alpha} C q ^ \delta +  \sum\limits_{\rho=0}^{d} |L_s^{(\rho)}(j)| \cdot \sum\limits_{\delta=d+1}^{n_s-1} K\delta^{\alpha} C q^{\delta} \Big)
    \label{eq:errorboundind} \\
  &\leq&  \sum\limits_{\delta=d+1}^{\infty}K \delta^{\alpha} C q ^ {\delta} + \big(1+\sum\limits_{\rho=1}^d K \rho^{\alpha}\big) \sum\limits_{\delta=d+1}^{\infty} K \delta^{\alpha} C q^{\delta}, \nonumber
\end{eqnarray}
where we used \eqref{eq:entry} for the second line and that $\udist_s(i,k)>d$ for $k\in V_{\ell(j)}, k\neq j$, see \eqref{eq:distancerelation}, %also again used the fact 
for the third.
As shown in the proof of Theorem~\ref{the:sparseApprox_general} there exists $d_1$ such that 
$$ \sum\limits_{\delta=d+1}^{\infty} K \delta^{\alpha} C q ^ \delta \leq \widehat{C} q^{\frac{d+1}{2}}   \enspace \mbox{ for $d > d_1$},
$$
with $\widehat{C}= \frac{CK}{1-\sqrt{q}}$. Hence, we obtain
$$ \|f(A_s)-f(A_s)^{[d]}\|_1  \leq \left( 2+ \sum\limits_{\rho=1}^d K \rho^\alpha \right ) \widehat{C} q^{\frac{d+1}{2}} \enspace \mbox{ for $d>d_1$.}
$$  
Since $\sum\limits_{\rho=1}^d \rho^{\alpha}  < d^{\alpha+1}$, we can find $d_2$ such that for $d > d_2$ we have
\begin{equation*}
\Big( 2+ \sum\limits_{\rho=1}^d K \rho^\alpha \Big) \widehat{C} q^{\frac{d+1}{2}}<\varepsilon.
\end{equation*}
The assertion thus holds for $\widetilde{d}=\max\{d_1,d_2\}.$
\end{proof}

While the formulation of Theorem~\ref{the:error_bound_polynomially_bounded} is focused on the uniform approximation property, we can also directly use \eqref{eq:errorboundind} to obtain error bounds for a single matrix $A$. 
%{allows to use information on the size of the level sets. If such information is known, e.g., when $A$ is banded or $G(A)$ is a regular $D$-dimensional lattice, then In practice, one can either use an error bound based on Proposition~\ref{prop:errorboundentry} or a bound which follows from \eqref{eq:errorboundind} by using more detailed information about the level sets of the graph.
We illustrate this for $\beta$-banded matrices, where---as opposed to the result formulated in Corollary~\ref{cor:bound_matrix_norms}---we now obtain a bound for the 1-norm that does not depend on $n$.

\begin{corollary}\label{cor:1norm_banded}
 Let $A\in \Cnn$ be a $\beta$-banded matrix and let $f(A)$ have exponential decay \eqref{eq:generalDecaybound}. %with respect to $G(A)$, i.e.,
 %$$[f(A)]_{ij} \leq C q^{\dist(i,j)}$$
 %for a constant $C$ and a decay rate $0<q<1$. 
Let $f(A)^{[d]}$ be the approximation defined by \eqref{eq:probing_sparse_approximation} with probing vectors resulting from the coloring \eqref{eq:coloring_banded}. Then
 $$\|f(A)-f(A)^{[d]}\|_1\leq 2 \beta q\frac{2+2d\beta}{1-q} \varepsilon, \enspace \mbox{ where $\varepsilon = Cq^d$}.$$
\end{corollary}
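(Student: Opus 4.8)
The plan is to specialize the general bound \eqref{eq:errorboundind} obtained in the proof of Theorem~\ref{the:error_bound_polynomially_bounded} to the banded setting, where the geometry of the graph is explicit and the level sets admit a clean uniform bound. For a $\beta$-banded matrix $A$, the undirected graph $|G(A)|$ is such that two nodes $i,j$ are within geodesic distance $\delta$ essentially when $|i-j| \leq \beta \delta$; more precisely $\udist(i,j) \leq \delta$ forces $|i-j| \leq \beta\delta$, so the level set $L^{(\delta)}(j)$ is contained in $\{i : (\delta-1)\beta < |i-j| \leq \delta\beta\}$ and hence satisfies $|L^{(\delta)}(j)| \leq 2\beta$ for every $\delta \geq 1$ and every node $j$. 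This gives the polynomial-boundedness hypothesis of Theorem~\ref{the:error_bound_polynomially_bounded} with the exponent $\alpha = 0$ and constant $K = 2\beta$, which is the key simplification: with $\alpha = 0$ all the sums over $\delta$ become plain geometric series, and no $m_1$/$d_1$ truncation argument is needed.

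Concretely, I would start from the estimate \eqref{eq:errorboundind} and insert $\alpha = 0$, $K = 2\beta$. The inner tail sum becomes $\sum_{\delta=d+1}^{n-1} 2\beta C q^\delta \leq 2\beta C \, q^{d+1}/(1-q)$, a genuine geometric bound valid for every $d$ without any smallness assumption on $d$. For the prefactor $\sum_{\rho=0}^{d}|L^{(\rho)}(j)|$, I would bound it by $1 + \sum_{\rho=1}^{d} 2\beta = 1 + 2d\beta$, using $|L^{(0)}(j)| = 1$. Substituting both into \eqref{eq:errorboundind} then yields
\[
\|f(A)-f(A)^{[d]}\|_1 \leq \bigl(1 + (1+2d\beta)\bigr)\,\frac{2\beta C q^{d+1}}{1-q} = (2 + 2d\beta)\,\frac{2\beta q}{1-q}\,Cq^d,
\]
which is exactly the claimed bound $2\beta q \frac{2+2d\beta}{1-q}\varepsilon$ with $\varepsilon = Cq^d$.

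The only genuine content, and hence the main thing to get right, is the counting estimate $|L^{(\delta)}(j)| \leq 2\beta$ for the banded graph: one must check that a single step in $|G(A)|$ moves the index by at most $\beta$, so that distance $\delta$ reaches at most the indices in a window of half-width $\beta\delta$, and that consecutive level sets partition these windows into annuli each of width $\beta$ on each side of $j$. Everything after that substitution is routine geometric-series arithmetic and bookkeeping of the constants; there is no truncation-threshold argument to carry out precisely because $\alpha=0$ makes $\delta^\alpha q^\delta = q^\delta$ summable outright. I would present the level-set count first as a short lemma-style observation, then perform the substitution into \eqref{eq:errorboundind} and collect constants.
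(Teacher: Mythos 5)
Your proposal is correct and takes essentially the same route as the paper's own proof: the paper likewise inserts the level-set bound $|L^{(\delta)}(j)|\leq 2\beta$ (asserted there without justification) into \eqref{eq:errorboundind}, bounds the prefactor $\sum_{\rho=0}^{d}|L^{(\rho)}(j)|$ by $1+2d\beta$ using $|L^{(0)}(j)|=1$, and sums the geometric series to get $2\beta C(2+2d\beta)\,q^{d+1}/(1-q)$, exactly as you do. The one nuance is that your annulus containment $L^{(\delta)}(j)\subseteq\{i:(\delta-1)\beta<|i-j|\leq\delta\beta\}$ tacitly assumes all entries within the band are nonzero (with zeros inside the band a node with $|i-j|\leq(\delta-1)\beta$ can still have distance exactly $\delta$), but the paper's unproved assertion rests on the same implicit assumption, so your argument is at least as complete as the paper's.
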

\begin{proof}
 For all nodes $j$ and levels $\delta$ we have $|L^{(\delta)}(j)|\leq 2\beta$. Using this and \eqref{eq:errorboundind} the approximation error of $f(A)^{[d]}$ can be bounded as
 \begin{align*}
  \|f(A)-f(A)^{[d]}\|_1&\leq \sum\limits_{\delta=d+1}^{n-1} 2 \beta C q ^ \delta + \sum\limits_{\rho=0}^d  |L^{\rho}(j)| %2 \beta
  \sum\limits_{\delta=d+1}^{n-1} 2 \beta C q^\delta  \\
  &\leq 2\beta C(1+1+2d\beta) \sum\limits_{\delta=d+1}^{n-1} q^\delta \leq  2\beta C(2+2d\beta) \sum\limits_{\delta=d+1}^{\infty} q^\delta,
 \end{align*}
which concludes the proof.
\end{proof}

Another situation in which it is possible to improve upon the result of Corollary~\ref{cor:bound_matrix_norms}, now for the Frobenius norm, is when the decay bounds that we have available have their origin in a polynomial approximation property~\eqref{eq:polynomial_approximation_decay}.

\begin{theorem}\label{the:error_bound_sparse_approx_poly}
Let $A \in \Cnn$ and  assume that the function $f$ fulfills~\eqref{eq:polynomial_approximation_decay}. Let $f(A)^{[d]}$ be the approximation  defined by \eqref{eq:probing_sparse_approximation} with probing vectors $v_\ell$ resulting from a distance $2d$-coloring of $|G(A)|$ with color classes $V_\ell$. %Choose $d \in \mathbb{N}$ such that $Cq^d \leq \varepsilon$. Then $f(A)^{[d]}$ from
%\eqref{eq:sparseApprox_error_entrywise} satisfies
Then 
\begin{equation*}\label{eq:sparse_approximation_poly_theorem}
\|f(A)-f(A)^{[d]}\|_F \leq 2K\sqrt{n}\varepsilon, \enspace \mbox{ with $\varepsilon = Cq^d$},
\end{equation*}
where $K = 1$ when $A$ is normal and $K = 1+\sqrt{2}$ otherwise.
\end{theorem}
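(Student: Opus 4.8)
The plan is to reduce everything to the residual of a polynomial approximation, exploiting that the probing map is \emph{exact} on the sparse polynomial part. First I would let $p=p_d$ be a polynomial of degree at most $d$ realizing the bound in~\eqref{eq:polynomial_approximation_decay}, so that $\max_{z\in\mathcal{W}}|f(z)-p(z)|\leq Cq^d=\varepsilon$. Applying Theorem~\ref{the:crouzeix} to $g=f-p$ (with $\W(A)\subseteq\mathcal{W}$) yields $\|R\|_2\leq K\varepsilon$ for the residual matrix $R:=f(A)-p(A)$. Since $\deg p\leq d$, the fact underlying Theorem~\ref{the:polynomial_approx_decay} gives $[p(A)]_{ij}=0$ whenever $\dist(i,j)>d$, i.e.\ $p(A)$ has precisely the sparsity pattern that the distance threshold $d$ in~\eqref{eq:probing_sparse_approximation} is designed to capture.

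Next I would show that the probing approximation operator $M\mapsto M^{[d]}$ defined by~\eqref{eq:probing_sparse_approximation} reproduces $p(A)$ exactly, i.e.\ $p(A)^{[d]}=p(A)$. This is where the distance-$2d$ coloring enters: for $\udist(i,j)\leq d$ with $j\in V_\ell$, any other $k\in V_\ell$ satisfies $\udist(i,k)\geq\udist(j,k)-\udist(i,j)>2d-d=d$, using $\udist(j,k)>2d$ from the coloring, so $[p(A)]_{ik}=0$ and $[p(A)v_\ell]_i=[p(A)]_{ij}$; the case $\udist(i,j)>d$ is immediate because then $\dist(i,j)>d$. As $M\mapsto M^{[d]}$ is linear, this lets me write $f(A)-f(A)^{[d]}=R-R^{[d]}$, and the triangle inequality splits the Frobenius error as $\|R\|_F+\|R^{[d]}\|_F$.

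The first term is routine: $\|R\|_F\leq\sqrt{n}\,\|R\|_2\leq\sqrt{n}\,K\varepsilon$. The crux, and the step I expect to be the main obstacle, is bounding $\|R^{[d]}\|_F$ by the same quantity rather than by a $|V_\ell|$-dependent factor as in Corollary~\ref{cor:bound_matrix_norms}. The key structural observation is that for each fixed row index $i$ and each color $\ell$ there is \emph{at most one} column $j\in V_\ell$ with $\udist(i,j)\leq d$: two such columns $j,j'$ would give $\udist(j,j')\leq\udist(i,j)+\udist(i,j')\leq 2d$, contradicting that $V_\ell$ is a color class of a distance-$2d$ coloring of $|G(A)|$. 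Hence each nonzero entry $[R^{[d]}]_{ij}=[Rv_\ell]_i$ is counted at most once per pair $(i,\ell)$, giving $\|R^{[d]}\|_F^2\leq\sum_i\sum_\ell|[Rv_\ell]_i|^2=\sum_\ell\|Rv_\ell\|_2^2\leq\|R\|_2^2\sum_\ell\|v_\ell\|_2^2=\|R\|_2^2\sum_\ell|V_\ell|=n\|R\|_2^2$, where I use $\|v_\ell\|_2^2=|V_\ell|$ and $\sum_\ell|V_\ell|=n$.

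Combining the two estimates, $\|R^{[d]}\|_F\leq\sqrt{n}\,K\varepsilon$ as well, so $\|f(A)-f(A)^{[d]}\|_F\leq\sqrt{n}\,K\varepsilon+\sqrt{n}\,K\varepsilon=2K\sqrt{n}\,\varepsilon$, which is the claimed bound. The conceptual heart is the exactness $p(A)^{[d]}=p(A)$, which isolates the residual; the technically delicate part is the counting argument that turns the per-color-class contributions into $\sum_\ell\|Rv_\ell\|_2^2$ and thereby replaces the factor $\gamma$ from Corollary~\ref{cor:bound_matrix_norms} by the dimension-only factor $\sqrt{n}$.
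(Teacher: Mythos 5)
Your proposal is correct and follows essentially the same route as the paper's proof: choose a near-best degree-$d$ polynomial $p_d$, invoke Theorem~\ref{the:crouzeix} to bound the residual $R=f(A)-p_d(A)$ in the $2$-norm, use exactness of the probing map on $p_d(A)$ (which is the paper's key observation $p_d(A)^{[d]}=p_d(A)$) to reduce the error to $\|R\|_F+\|R^{[d]}\|_F$, and bound each term by $K\sqrt{n}\varepsilon$. If anything, your explicit counting argument (at most one $j\in V_\ell$ per row $i$ with $\udist(i,j)\leq d$) justifies, as an inequality, the step the paper states as the equality $\|f(A)^{[d]}-p_d(A)^{[d]}\|_F^2=\sum_{\ell}\|f(A)v_\ell-p_d(A)v_\ell\|_2^2$, so your write-up is marginally more careful on that point.
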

\begin{proof}
Let $p_d$ be a polynomial of degree $d$ such that $|f(z)-p_d(z)| \leq Cq^d$ for all $z \in \W(A)$, which exists since $f$ satisfies~\eqref{eq:polynomial_approximation_decay}. We now estimate the two terms in the triangle inequality
\begin{equation}\label{eq:errornorm_triangle}
\|f(A)-f(A)^{[d]}\|_F \leq \|f(A)-p_d(A)\|_F + \|f(A)^{[d]}-p_d(A)\|_F
\end{equation}
individually. For the first term, note that for $i=1,\ldots,n$ we have $\|f(A)e_i-p_d(A)e_i\|_2 \leq \|f(A)-p_d(A)\|_2 \leq K\varepsilon$ due to  Theorem~\ref{the:crouzeix}. This directly implies 
\begin{equation}\label{eq:estimate_first_term_poly}
\|f(A)-p_d(A)\|_F \leq K \sqrt{n} \varepsilon.
\end{equation}
Similarly, %with $V_\ell$ and $v_\ell$ from \eqref{eq:sparseApprox_error_entrywise} 
we also have $\|f(A)v_\ell - p_d(A)v_\ell\|_2  \leq K\varepsilon \|v_\ell\|_2 =  K\varepsilon\sqrt{|V_\ell|}$. For the degree $d$ polynomial $p_d$  %Now note that for $p_d(A)$, 
the sparse approximation $p_d(A)^{[d]}$ is exact so that
%, i.e., $p_d(A)^{[d]} = p_d(A)$: We have $[p_d(A)]_{ij} = 0$ whenever $\udist(i,j) > d$, and for the nonzero entries, the relation~\eqref{eq:entry} implies
%$$[p_d(A)^{[d]}]_{ij} - [p_d(A)]_{ij} = \sum\limits_{\substack{k\in V_\ell \\ k\neq j}} [p_d(A)]_{ik} = 0.$$
\begin{align*}
\|f(A)^{[d]} - p_d(A)\|_F^2 \, = \, \|f(A)^{[d]} - p_d(A)^{[d]}\|_F^2 & = \, \sum_{\ell = 1}^m \|f(A)v_\ell - p_d(A)v_\ell\|_2^2 \\
& \leq \, \sum_{\ell=1}^m K^2\varepsilon^2 |V_\ell| \, = \, K^2\varepsilon^2 n,
\end{align*}
which gives the estimate
\begin{equation}\label{eq:estimate_second_term_poly}
\|f(A)^{[d]} - p_d(A)\|_F = \|f(A)^{[d]} - p_d(A)^{[d]}\|_F \leq K \sqrt{n} \varepsilon.
\end{equation}
Inserting~\eqref{eq:estimate_first_term_poly} and \eqref{eq:estimate_second_term_poly} into~\eqref{eq:errornorm_triangle} gives the desired result.
\end{proof}

\section{Approximation of the trace of matrix functions}  \label{sec:trace}
%In this section, we consider the problem of approximating the trace of $f(A)$, 
%$$\tr(f(A))=\sum\limits_{i=1}^n  [f(A)]_{ii} $$
%for large, sparse matrices $A\in \Cnn$ and functions $f$ defined on the spectrum of $A$.

We now turn to investigating the accuracy of the probing method \eqref{eq:probing_trace_approximation} for approximating the trace $\tr(f(A))$. As we will see, instead of using distance-$2d$ colorings for the undirected graph $|G(A)|$ we can now work with distance-$d$ colorings in the directed graph $G(A)$. %This is in anology reflects the fact that the trace, being the sum of bilinear \review{forms} $e_i^Hf(A)e_i$   appears as a ``quadratic'' quantity.
For the probing vectors defined in~\eqref{eq:probing_vectors} we have
$$v_{\ell}^Hf(A)v_{\ell}=\sum\limits_{i\in V_\ell} [f(A)]_{ii} + \sum\limits_{\substack{i,j\in V_\ell \\ i\neq j}} [f(A)]_{ij},$$
from which we immediately obtain
\begin{equation*}\label{eq:trace_approx_with_error}
\tr(f(A)) = \sum\limits_{i=1}^n [f(A)]_{ii} = \sum\limits_{\ell=1}^m v_\ell^H f(A) v_\ell - \sum\limits_{\ell=1}^m \sum\limits_{\substack{i,j\in V_\ell \\ i\neq j}} [f(A)]_{ij}.
\end{equation*}
Thus, the error of the approximation $\T(f(A))$ from in~\eqref{eq:probing_trace_approximation} is given by
\begin{equation}\label{eq:error}
\left| \tr(f(A))-\T(f(A))\right|=\bigg|\sum\limits_{\ell=1}^m \sum\limits_{\substack{i,j\in V_\ell \\ i\neq j}} [f(A)]_{ij}\bigg|.
\end{equation}

To obtain bounds for the error~\eqref{eq:error} when $f(A)$ has exponential decay \eqref{eq:generalDecaybound}, consider a distance-$d$ coloring of $G(A)$ with color classes %we again choose $d$ such that~\eqref{eq:leq_epsilon} holds and determine the partitioning 
$V_\ell, \ell = 1,\dots,m$. Then, with the corresponding probing vectors~\eqref{eq:probing_vectors} and with $\varepsilon = Cq^d$, an immediate error bound is given by
\begin{equation}\label{eq:errorbound}
 \left| \tr(f(A))-\T(f(A))\right|\leq \sum\limits_{\ell=1}^m \sum\limits_{\substack{i,j\in V_\ell \\ i\neq j}} \varepsilon = \sum\limits_{\ell=1}^m |V_{\ell}|(|V_{\ell}|-1) \varepsilon.
\end{equation}
If we assume that the size of the color classes is asymptotically given by $\OO(\frac{n}{m})$, i.e., if the nodes are distributed uniformly among the color classes, and if the number of colors $m$ is independent of $n$, then the error bound \eqref{eq:errorbound} is of order $\OO(n^2) \varepsilon$. In the following we discuss cases in which better error bounds than~\eqref{eq:errorbound} can be obtained. Similar to the sparse approximation discussed in Section~\ref{subsec:sparse_approx_probing},  we can give $\OO(n)\varepsilon$ error bounds by exploiting knowledge about the specific coloring of $G(A)$. E.g., for banded matrices $A$, using the coloring \eqref{eq:coloring_banded}, we obtain the following improved error bound. Note that the result also holds for matrices $A$ for which a permutation $P^T\!\!AP$ is banded if we permute the probing vectors accordingly.

\begin{theorem}\label{the:errorbound_banded}
 Assume that $A \in \Cnn$ is $\beta$-banded and that $f(A)$ has  exponential decay \eqref{eq:generalDecaybound}. %decay property with respect to $G(A)$, i.e.,
 %$$[f(A)]_{ij} \leq C q^{\dist(i,j)}$$
 %for a constant $C$ and a decay rate $0<q<1$. 
 Let $\T(f(A))$ be the approximation \eqref{eq:probing_trace_approximation} to the trace, where the vectors $v_\ell$ are computed with respect to the coloring \eqref{eq:coloring_banded} for a given distance $d$ and put $\varepsilon = Cq^d$. Then
 $$|\tr(f(A))-\T(f(A))| \leq \varepsilon \frac{2n}{1-q^d}.$$
\end{theorem}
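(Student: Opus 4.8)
The plan is to start from the exact error expression~\eqref{eq:error}, bound it via the triangle inequality, and then convert the naive double sum over color classes into a single sum over nodes, after which a geometric series in the graph distance can be summed off. The decisive structural input is the relationship between the geodesic distance in $G(A)$ and the index difference for a $\beta$-banded matrix: since $[A]_{ij}=0$ whenever $|i-j|>\beta$, every edge of $G(A)$ changes the index by at most $\beta$, so a shortest path from $i$ to $j$ must have length at least $|i-j|/\beta$, i.e.\ $\dist(i,j)\geq \lceil |i-j|/\beta\rceil \geq |i-j|/\beta$. Combined with the exponential decay~\eqref{eq:generalDecaybound} and $0<q<1$, this yields $|[f(A)]_{ij}|\leq Cq^{\,|i-j|/\beta}$.

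Next I would identify the members of a common color class. Because the coloring~\eqref{eq:coloring_banded} assigns $\col(i)=(i-1)\bmod(d\beta+1)+1$, two distinct indices $i,j$ share a color exactly when $|i-j|$ is a positive multiple of $d\beta+1$. Hence for $i,j\in V_\ell$ with $i\neq j$ one may write $|i-j|=k(d\beta+1)$ for some integer $k\geq 1$, and the distance estimate above gives $\dist(i,j)\geq k(d\beta+1)/\beta \geq kd$, so that $|[f(A)]_{ij}|\leq Cq^{kd}$.

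With these two ingredients the estimate becomes routine. Applying the triangle inequality to~\eqref{eq:error} and regrouping, I would write
$$|\tr(f(A))-\T(f(A))| \leq \sum_{\ell=1}^m\sum_{\substack{i,j\in V_\ell\\ i\neq j}}|[f(A)]_{ij}| = \sum_{i=1}^n \sum_{\substack{j\neq i\\ \col(j)=\col(i)}} |[f(A)]_{ij}|,$$
since each ordered same-color pair is counted once on either side. For a fixed $i$, the same-color partners lie at $j=i\pm k(d\beta+1)$, $k\geq 1$; splitting into $j>i$ and $j<i$ and summing the two geometric series $\sum_{k\geq 1}Cq^{kd}=Cq^d/(1-q^d)=\varepsilon/(1-q^d)$ bounds the inner sum by $2\varepsilon/(1-q^d)$. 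Summing over the $n$ choices of $i$ then produces the claimed bound $\varepsilon\,\frac{2n}{1-q^d}$.

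The argument is short, and the only real obstacle is the first step: recognizing and justifying $\dist(i,j)\geq\lceil |i-j|/\beta\rceil$, which converts graph-distance decay into index-difference decay and is precisely where the band structure enters. Everything afterward is the mechanical decoupling of the $\OO(n^2)$ double sum into $n$ convergent geometric series, one per node, which is exactly what turns the band structure into an $n$-independent per-node contribution of size $2\varepsilon/(1-q^d)$.
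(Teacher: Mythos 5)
Your proof is correct and takes essentially the same route as the paper's: bound the error \eqref{eq:error} via the decay estimate, observe that indices sharing a color under \eqref{eq:coloring_banded} differ by positive multiples of $d\beta+1$ and hence have graph distance at least $kd$, and sum the resulting geometric series to obtain a per-node contribution of $2\varepsilon/(1-q^d)$, hence $2n\varepsilon/(1-q^d)$ in total. If anything, your justification of the distance estimate via $\dist(i,j)\geq \lceil |i-j|/\beta\rceil \geq kd$ is more careful than the paper's, which asserts the exact equality $\dist(i,j)=|r-s|d$ (not true in general, e.g.\ for a full band the distance is strictly larger), although only the lower bound is ever used there as well.
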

\begin{proof}
The color classes of the coloring \eqref{eq:coloring_banded} are given as 
 $$V_\ell = \left \{ \ell +k(d\beta+1), k=0, \ldots, \left \lfloor \frac{n-\ell}{d\beta+1} \right \rfloor \right \}, \quad \ell=1,\ldots, d\beta+1 =:m.$$
% $$
% V_\ell = \{ i: 1 \leq i \leq n \mbox{ and }i \equiv \ell \bmod d\beta+1\}, \enspace \ell = 1,\ldots d\beta +1 =:m,
% $$
By %so that by 
inserting the decay bounds into \eqref{eq:error}, we obtain
 \begin{align} \label{eq:banded_trace_first_estimate}
  \left| \tr(f(A))-\T(f(A))\right|\leq \sum\limits_{\ell=1}^m \sum\limits_{\substack{i,j\in V_\ell \\ i\neq j}} |[f(A)]_{ij}| 
  \leq \sum\limits_{\ell=1}^m \sum\limits_{\substack{i,j\in V_\ell \\ i\neq j}} C q^{\dist(i,j)}.
 \end{align}
 %with $m= d\beta +1$.  where $\beta$ is the bandwidth of $A$ after permutation. For ease of presentation, we assume that $A$ does already have the bandwidth $\beta$ without needing a permutation. 
 %
Now, for any color $\ell$, if $i,j \in V_\ell, i= \ell + rm, j =\ell+sm$, then
 $ %\[
 \dist(i,j) = |r-s|d.
 $ %\]
 Thus, for all $\ell$ we have
 \begin{eqnarray*}
 \sum\limits_{\substack{i,j \in V_\ell \\ i \neq j}} q^{\dist(i,j)} &=& \sum\limits_{r=0}^{\lfloor \frac{n-\ell}{m} \rfloor} \sum\limits_{\substack{s=0 \\ s\neq r}}^{\lfloor \frac{n-\ell}{m} \rfloor} q^{|s-r|d} 
 \, \leq \, 
 2 \sum\limits_{r=0}^{\lfloor \frac{n-\ell}{m} \rfloor} \sum\limits_{k=1}^{\lfloor \frac{n-\ell}{m} \rfloor} q^{kd} \\
 & \leq & 2 \sum\limits_{r=0}^{\lfloor \frac{n-\ell}{m} \rfloor} \frac{q^d}{1-q^d} 
 \, = \, 
 2 |V_\ell| \frac{q^d}{1-q^d}.
 \end{eqnarray*}
 Inserting this relation into \eqref{eq:banded_trace_first_estimate} gives 
 \begin{align*} 
  \left| \tr(f(A))-\T(f(A))\right|\leq  \sum\limits_{\ell=1}^m  |V_\ell| 2 C \frac{q^d}{1-q^d} = 2nC \frac{q^d}{1-q^d} = \varepsilon \frac{2n}{1-q^d}.
 \end{align*}
 \hfill
 %Hence, a set of nodes $V_\ell$ can be written as $V_\ell = \{ w_1, \ldots, w_{|V_\ell|}\}, \ell=1,\ldots, d\beta+1$ where $w_k:= \ell +(k-1)(d\beta+1)$. Within a set $V_\ell$, we thus have the distance relations
 %\begin{equation}\label{eq:distrelations}
 %\dist(w_i,w_j) > |i-j|d, \quad i,j \in \{1,\ldots,|V_\ell|\}, \, i \neq j
 %\end{equation}
 %which can be seen as follows.
% 
% For $i,j \in \{1,\ldots,|V_\ell|\}$ with $i\neq j$ the condition \eqref{eq:distrelations} can be equivalently expressed as $[\mathcal{A}^{|i-j|d}]_{w_i,w_j} = 0$, where $\mathcal{A}$ is the adjacency matrix of $G(A)$, i.e., $\mathcal{A}$ is a binary matrix with the same off-diagonal sparsity pattern as $A$ and zero diagonal. In particular, $\mathcal{A}$ is $\beta$-banded, thus $\mathcal{A}^{|i-j|d}$ is $(|i-j|d \beta)$-banded. Hence, due to
% $$|w_i-w_j| = |\ell + (i-1)(b\beta+1)- ( \ell + (j-1)(\beta d +1) | = |i-j|(d\beta +1) > |i-j| d \beta,$$
% we have $[\mathcal{A}^{|i-j|d}]_{w_i,w_j} = 0$, i.e., $\dist(w_i,w_j) > |i-j|d$.
% 
% Because of \eqref{eq:distrelations} we know that for every node $w\in V_\ell$  we have $\dist(w,w_i)>2d$ for at least $|V_\ell|-2$ nodes $w_i \in V_\ell \setminus \{w\}$, $\dist(w,w_i)>3d$ for at least $|V_\ell|-4$ nodes $w_i \in V_\ell\setminus \{w\}$ and so on. Hence we obtain
% \begin{align*}
%   \sum\limits_{\ell=1}^k \sum\limits_{\substack{i,j\in V_\ell \\ i\neq j}} C q^{\dist(i,j)} &\leq \sum\limits_{\ell=1}^k |V_\ell| \sum\limits_{i=1}^\infty 2Cq^{id} = 2Cn \frac{q^d}{1-q^d},
% \end{align*}
%which concludes the proof.
\end{proof}

A similar $\OO(nq^d)$ %$\OO(n\varepsilon)$ 
bound can be formulated if $G(A)$ is a regular $D$-dimensional lattice and the coloring of Theorem~\ref{the:lattice_col} is used. We state this results using the polylogarithm %
%\begin{equation}\label{eq:polylogarithm}
%\Li_{s}(z) = \sum\limits_{i=1}^{\infty} \frac{z^i}{i^s}.
%\end{equation}
%
%As a consequence of Lemma~\ref{lemma:levelsets}, we know that for each node in a regular $D$-dimensional lattice the number of nodes with exactly distance $d$ can be bounded by $2Dd^{D-1}$. By making use of this bound, we obtain the following result which is stated in terms of the polylogarithm function
$%\begin{equation*} %\label{eq:polylogarithm}
\Li_{s}(z) = \sum_{i=1}^{\infty} \frac{z^i}{i^s}.
%\end{equation*}
$
 
\begin{theorem}\label{the:errorbound_lattice}
 Let $A\in \Cnn$ be a matrix for which $G(A)$ is a regular $D$-dimensional lattice. Let $f(A)$ have exponential decay \eqref{eq:generalDecaybound}. %with respect to $G(A)$, i.e.,
 %$$[f(A)]_{ij} \leq C q^{\dist(i,j)}$$
 %for a constant $C$ and a decay rate $0<q<1$. 
 Let $\T(f(A))$ be defined by \eqref{eq:probing_trace_approximation}, where the vectors $v_\ell$ are computed with respect to the distance-$d$ coloring of Theorem~\ref{the:lattice_col}. Then
 $$|\tr(f(A))-\T(f(A))| \leq 2CDn\Li_{1-D}(q^d).$$
%where $\Li_{1-D}$ is the polylogarithm function~\eqref{eq:polylogarithm} of order $1-D$.
\end{theorem}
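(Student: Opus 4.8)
The plan is to mirror the structure of the proof of Theorem~\ref{the:errorbound_banded}, replacing the one-dimensional coarse grid by the $D$-dimensional coarse grid induced by the coloring of Theorem~\ref{the:lattice_col}. Starting from the exact error expression~\eqref{eq:error} and inserting the decay bound~\eqref{eq:generalDecaybound}, I would first reduce the task to estimating $\sum_{\ell=1}^m \sum_{i,j \in V_\ell,\, i \neq j} q^{\dist(i,j)}$, and within this, for a fixed color class $V_\ell$ and a fixed node $i \in V_\ell$, to estimating the inner sum $\sum_{j \in V_\ell,\, j \neq i} q^{\dist(i,j)}$. The goal is to bound this inner sum uniformly, independently of $i$, $\ell$ and $n$, so that summing it over all nodes contributes only the factor $n$.

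The key structural step is the coarse-grid observation already highlighted after Theorem~\ref{the:lattice_col}. Identifying each node with its coordinate vector, I would use that the coloring~\eqref{eq:coloring_lattice} depends only on the coordinates taken modulo $d+1$: two nodes $i,j$ in the same color class satisfy $w_i^{[k]} - w_j^{[k]} \in (d+1)\Z$ for every $k$, so with the lattice distance $\dist(i,j) = \|w_i - w_j\|_1$ one gets $\dist(i,j) = (d+1)\|m\|_1$ for some $m \in \Z^D$, with $m \neq 0$ precisely when $i \neq j$. Since the map $j \mapsto m = (w_i - w_j)/(d+1)$ is injective, the finite inner sum over $V_\ell \setminus \{i\}$ is bounded by the full lattice sum $\sum_{m \in \Z^D,\, m \neq 0} q^{(d+1)\|m\|_1}$. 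Using $q^{(d+1)t} \leq q^{dt}$ (valid since $0 \leq q < 1$ and $t \geq 1$) and grouping the terms according to $\|m\|_1 = t$ then turns this into $\sum_{t=1}^\infty \sizeL_D^=(t)\, q^{dt}$.

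At this point Lemma~\ref{lemma:levelsets} supplies $\sizeL_D^=(t) \leq 2Dt^{D-1}$, and comparing $\sum_{t=1}^\infty 2Dt^{D-1}(q^d)^t$ with the series definition of the polylogarithm $\Li_{1-D}$ identifies the inner sum as being at most $2D\,\Li_{1-D}(q^d)$, uniformly in $\ell$ and $i$. Summing this uniform bound over all $i \in V_\ell$ and all colors $\ell$, and using $\sum_{\ell=1}^m |V_\ell| = n$ together with the prefactor $C$ from the decay bound, then yields $2CDn\,\Li_{1-D}(q^d)$, as claimed.

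The manipulation of~\eqref{eq:error} and the recognition of the polylogarithmic series are routine. The step requiring the most care is the structural claim that same-color nodes form a coarse grid whose coordinate gaps lie in $(d+1)\Z$, together with the injectivity of $j \mapsto m$ that lets one pass safely from the finite color class to the infinite lattice $\Z^D$; this is precisely what makes the inner bound uniform and $n$-independent. I expect this to be the main obstacle, since it hinges on the specific modular form of the coloring~\eqref{eq:coloring_lattice} rather than on properties of a generic distance-$d$ coloring.
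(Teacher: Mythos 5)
Your proposal is correct and follows essentially the same route as the paper's proof: reduce to the per-color sums via~\eqref{eq:error}, exploit the coarse-lattice structure of the color classes from Theorem~\ref{the:lattice_col}, bound the number of same-color nodes at each coarse distance by Lemma~\ref{lemma:levelsets}, and recognize the resulting series as $\Li_{1-D}(q^d)$. If anything, your write-up is more careful than the paper's, which loosely states that same-color distances are multiples of $d$ (rather than $d+1$) and applies Lemma~\ref{lemma:levelsets} to the coarse grid without spelling out the injection into $\Z^D$ — both points you handle explicitly, and both are conservative since $q^{(d+1)t} \leq q^{dt}$.
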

\begin{proof}
Again, %the error can obviously be bounded by
 \begin{align*}
  \left| \tr(f(A))-\T(f(A))\right|\leq \sum\limits_{\ell=1}^k \sum\limits_{\substack{i,j\in V_\ell \\ i\neq j}} |[f(A)]_{ij}| 
  \leq \sum\limits_{\ell=1}^k \sum\limits_{\substack{i,j\in V_\ell \\ i\neq j}} C q^{\dist(i,j)},
 \end{align*}
with the color classes $V_\ell$ from Theorem~\ref{the:lattice_col}.
For this coloring, as illustrated in Figure~\ref{fig:graph_lattice_col}, the distances between nodes from the same color class are multiples of $d$ and these nodes actually form again a regular $D$-dimensional lattice. Lemma~\ref{lemma:levelsets} shows that for each node the number of nodes with distance $\delta$ in this lattice, i.e., with distance $\delta d$ in the original lattice, is bounded by $2D \, \delta^{D-1}$. Thus
\begin{eqnarray*}
\sum\limits_{\ell=1}^k \sum\limits_{\substack{i,j\in V_\ell \\ i\neq j}} C q^{\dist(i,j)} &\leq& \sum\limits_{\ell=1}^k |V_\ell| \sum\limits_{\delta=1}^\infty 2D \, \delta^{D-1} \, Cq^{\delta d} \\
&\leq& 2CDn \sum\limits_{\delta=1}^\infty \delta^{D-1}q^{\delta d}\\
&=& 2CDn\Li_{1-D}(q^d).\label{eq:sum_polylogarithm}
\end{eqnarray*}
%Rewriting the sum on the right-hand side of~\eqref{eq:sum_polylogarithm} as
%$$\sum\limits_{i=1}^\infty i^{D-1}q^d = \sum\limits_{i=1}^\infty \frac{(q^d)^i}{i^{1-D}},$$
%we see that it agrees with the polylogaritm~\eqref{eq:polylogarithm} of order $1-D$ evaluated at $z = q^d$, which proves the assertion.
\hfill
\end{proof}

\begin{remark}\label{rem:polylogarithm_rational}
For a given value of $D$, the bound from Theorem~\ref{the:errorbound_lattice} can be cast into a more explicit form by noting that all polylogarithms of negative integer order are rational functions of the form $\Li_{-s}(z) = \frac{p_{s}(z)}{(1-z)^{s+1}}$ where $p_s$ is a polynomial of degree $s$ such that $p_s(0) = 0$. An explicit representation can be found in terms of Eulerian numbers; see, e.g.,~\cite{Miller2008}. In particular, the first few polylogarithms of negative integer order are given by
$$\Li_{-1}(z) = \frac{z}{(1-z)^2}, \qquad \Li_{-2}(z) = \frac{z+z^2}{(1-z)^3}, \qquad \Li_{-3}(z) = \frac{z+4z^2+z^3}{(1-z)^4}.$$
Using these relations we, e.g., find the following bound for $D = 4$
$$|\tr(f(A))-\T(f(A))| \leq 8Cn\frac{q^d+4q^{2d}+q^{3d}}{(1-q^d)^4},$$
which for large $d$ behaves like $8Cnq^d$.

Let us further note that for the case $D=1$, i.e., a tridiagonal matrix $A$, both Theorem~\ref{the:errorbound_banded} and Theorem~\ref{the:errorbound_lattice} are applicable. Since $ \Li_0(z) = \tfrac{z}{1-z},$ both theorems actually agree in this case.
\end{remark}

As in the situation where we looked at the approximation quality for the matrix function as a whole, %instead of exploiting knowledge about the coloring of $G(A)$, 
we can again derive improved error bounds when we have a polynomial approximation property~\eqref{eq:polynomial_approximation_decay} available.

\begin{theorem}\label{the:boundsum}
Let $A \in \Cnn$ and assume that $f$ fulfills  condition~\eqref{eq:polynomial_approximation_decay}. Let the approximation $\T(f(A))$ in \eqref{eq:probing_trace_approximation} be obtained using a distance-$d$ coloring of $G(A)$. %Choose $d \in \mathbb{N}$ such that $Cq^d \leq \varepsilon$ holds. 
Then, with $\varepsilon = Cq^d$ we have
 \begin{equation}\label{eq:errorbound2}
 \left| \tr(f(A))-\T(f(A))\right|\leq 2Kn \varepsilon,
\end{equation}
where $K = 1$ when $A$ is normal and $K = 1+\sqrt{2}$ otherwise.
\end{theorem}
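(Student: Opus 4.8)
The plan is to mimic the strategy of Theorem~\ref{the:error_bound_sparse_approx_poly}: I would replace $f$ by a near-best polynomial approximation of degree $d$ and exploit that such a polynomial induces no coupling between two distinct nodes of the same color class. Concretely, I first invoke the approximation property~\eqref{eq:polynomial_approximation_decay} to select a polynomial $p_d \in \Pi_d$ with $\max_{z \in \W(A)} |f(z) - p_d(z)| \leq Cq^d = \varepsilon$, and set $E := f(A) - p_d(A)$. Applying Theorem~\ref{the:crouzeix} to $g = f - p_d$, together with $\W(A) \subseteq \mathcal{W}$, then gives $\|E\|_2 \leq K\varepsilon$, with $K = 1$ in the normal and $K = 1+\sqrt{2}$ in the general case.

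The crucial observation is that in a distance-$d$ coloring of $G(A)$, any two \emph{distinct} nodes $i,j$ in the same color class $V_\ell$ satisfy $\dist(i,j) > d$. Since $p_d$ has degree $d$ and $[p_d(A)]_{ij} = 0$ whenever $\dist(i,j) > d$, the off-diagonal entries of $p_d(A)$ vanish within each color class. Hence in the error representation~\eqref{eq:error} I may replace $[f(A)]_{ij}$ by $[E]_{ij}$ for these pairs, obtaining
\begin{equation*}
\tr(f(A)) - \T(f(A)) = -\sum_{\ell=1}^m \sum_{\substack{i,j\in V_\ell \\ i \neq j}} [E]_{ij}.
\end{equation*}

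It remains to convert these restricted off-diagonal sums into quantities controlled by $\|E\|_2$. For each color class I complete the sum to the full bilinear form via $\sum_{i,j\in V_\ell,\, i\neq j}[E]_{ij} = v_\ell^H E v_\ell - \sum_{i\in V_\ell}[E]_{ii}$, where $v_\ell$ is the probing vector from~\eqref{eq:probing_vectors}. Summing over $\ell$ and using the triangle inequality splits the error into an off-diagonal part $\sum_{\ell} |v_\ell^H E v_\ell|$ and a diagonal part $\sum_{i=1}^n |[E]_{ii}|$. The first is bounded by $|v_\ell^H E v_\ell| \leq \|E\|_2 \|v_\ell\|_2^2 = \|E\|_2\,|V_\ell|$ together with $\sum_\ell |V_\ell| = n$, giving at most $Kn\varepsilon$; the second uses $|[E]_{ii}| = |e_i^H E e_i| \leq \|E\|_2$, again yielding at most $Kn\varepsilon$. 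Adding the two contributions produces the claimed bound $2Kn\varepsilon$.

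The one genuinely delicate point is the bookkeeping in the last step: the diagonal entries of $p_d(A)$ need \emph{not} vanish, so they cannot simply be discarded from the bilinear form. This is exactly why the diagonal correction $\sum_{i\in V_\ell}[E]_{ii}$ appears and contributes the second factor of $Kn\varepsilon$, doubling the naive estimate. Everything else is a routine combination of the near-best approximation property, Crouzeix's theorem, and the polynomial-annihilation property of distance-$d$ colorings.
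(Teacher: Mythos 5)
Your proposal is correct and follows essentially the same route as the paper's proof: both replace $f$ by a near-best degree-$d$ polynomial $p_d$, use Theorem~\ref{the:crouzeix} to get $\|f(A)-p_d(A)\|_2 \leq K\varepsilon$, exploit that $p_d(A)$ has no off-diagonal entries within a color class (equivalently, that probing is exact for $p_d$), and then bound a diagonal term $\sum_i |[E]_{ii}| \leq Kn\varepsilon$ plus a bilinear-form term $\sum_\ell |v_\ell^H E v_\ell| \leq Kn\varepsilon$. The paper merely phrases the decomposition as a triangle inequality through the intermediate quantity $\tr(p_d(A))$, which yields exactly the same identity you derive from the error representation~\eqref{eq:error}.
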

\begin{proof} We proceed as in the proof of Theorem~\ref{the:error_bound_sparse_approx_poly}.
Let $p_d$ be a polynomial of degree $d$ such that $|f(z)-p_d(z)| \leq Cq^d = \varepsilon$ for all $z \in \W(A)$, which exists since $f$ fulfills~\eqref{eq:polynomial_approximation_decay}. Then
\begin{equation} \label{eq:poly_approx_matrix_error}
\|f(A)-p_d(A)\|_2 \leq K\varepsilon.
\end{equation}
We write
\begin{equation}\label{eq:trace_error_triangle}
\left| \tr(f(A))-\T(f(A))\right| \leq \left| \tr(f(A))-\tr(p_d(A))\right| + \left| \T(f(A))-\tr(p_d(A))\right|.
\end{equation}
%Similarly to the proof of Theorem~\ref{the:error_bound_sparse_approx_poly}, we estimate both terms on the right-hand side of~\eqref{eq:trace_error_triangle} individually. 
For the first term, we get, using the linearity of the trace, \eqref{eq:poly_approx_matrix_error} and the Cauchy-Schwarz inequality
\begin{equation}\label{eq:trace_first_term}
\left| \tr(f(A))-\tr(p_d(A))\right| \leq \sum\limits_{\ell=1}^n \left|e_\ell^T\left(f(A)-p_d(A)\right)e_\ell\right| \leq \sum\limits_{\ell=1}^n K\varepsilon = Kn\varepsilon.
\end{equation}
For the second term, note that the probing approximation $\T(p_d(A))$ for the trace is exact, $\tr(p_d(A))=\T(p_d(A))$. Therefore, in a similar manner as for the first term, we obtain
\begin{eqnarray}
\left| \T(f(A))-\tr(p_d(A))\right| &=& \left| \T(f(A))-\T(p_d(A))\right| \leq \sum\limits_{\ell=1}^m \left|v_\ell^T\left(f(A)-p_d(A)\right)v_\ell\right| \nonumber\\
&\leq& \sum\limits_{\ell=1}^m K|V_\ell|\varepsilon = Kn\varepsilon. \label{eq:trace_second_term}
\end{eqnarray}
Inserting~\eqref{eq:trace_first_term} and~\eqref{eq:trace_second_term} into~\eqref{eq:trace_error_triangle} concludes the proof.
\end{proof}

The numerical examples in Section~\ref{sec:experiments} illustrate that the error of the probing-based approximations scales indeed linearly with the dimension $n$ of the matrix. In this sense, $\OO(n) \varepsilon$ error bounds are the best we can achieve.

\section{Using Krylov subspace methods in the probing approach}\label{sec:krylov}

Probing methods require the computation of matrix-vector products $f(A)v_\ell$ or bilinear forms $v_\ell^Tf(A)v_\ell$. Both are standard tasks in numerical linear algebra, for which a plethora of different methods has been developed.  Widely used methods for both tasks are Krylov subspace methods. As with any iterative method, an important question arising in this context is how to find a good stopping criterion in order to
keep the computational cost as small as possible while at the same time guaranteeing that the desired overall accuracy is reached in the approximation of $f(A)$ or $\tr(f(A))$. 

We now answer this question for the situation that the decay bounds we have available stem from a polynomial approximation property of the form~\eqref{eq:polynomial_approximation_decay}. %, a situation frequently encountered in applications. 
We begin by very shortly reviewing a few important facts about Arnoldi's % the Arnoldi
method, the prototype Krylov subspace method; see, e.g.,~\cite[Section 3.5]{FrommerSimoncini2008} or~\cite[Section 13.2]{Higham2008} for details. The approximation for $f(A)b$ from $s$ steps of Arnoldi's method is given by
\begin{equation}\label{eq:arnoldi_approximation}
f_s = \|b\|_2 \review{Q_s} f(H_s) e_1,
\end{equation}
where the columns of $\review{Q_s}$ are the orthonormal Arnoldi basis vectors and $H_s =\review{Q_s^H}A\review{Q_s} \in \C^{s \times s}$ is the upper Hessenberg matrix containing the orthogonalization coefficients. We have that 
\begin{equation*}\label{eq:arnoldi_polynomial}
f_s = \|b\|_2 \review{Q_s} %\widehat
\widetilde{p}_{s-1}(H_s)e_1 = \widetilde{p}_{s-1}(A)b,
\end{equation*}
where $\widetilde{p}_{s-1}$ is the polynomial of degree $s-1$ that interpolates $f$ on the eigenvalues of $H_s$ in the Hermite sense. 

\review{The near-optimality property of the Arnoldi approximation~\cite[Proposition~3.1]{BR09}, \cite[Section 4.2.2]{Guettel2010}, which is based on Theorem~\ref{the:crouzeix}, guarantees that it fulfills
$$\|f(A)b-f_s\|_2 \leq 2K\|b\| \min_{p_s \in \Pi_s} \max_{z \in \mathcal{W}} |f(z)-p(z)|$$
for any compact set $\mathcal{W}$ containing the numerical range of $A$. Thus, if the assumption~\eqref{eq:polynomial_approximation_decay} holds, this implies 
\begin{equation}\label{eq:arnoldi_error}
\|f(A)b-f_s\|_2 \leq 2\|b\|_2KCq^{s-1}.
\end{equation}}

\subsection{%Using the Arnoldi method for constructing a 
Sparse approximation}\label{subsec:krylov_sparseapprox}
%In this section, we brief\/ly outline how to use the Arnoldi method for constructing a sparse approximation of $f(A)$ by a probing approach. 
Let $f_s^{(\ell)}$ denote the Arnoldi approximation~\eqref{eq:arnoldi_approximation} for $f(A)v_\ell$. By replacing $f(A)v_\ell$ by $f_s^{(\ell)}$ in~\eqref{eq:probing_sparse_approximation}, we obtain the approximation
\begin{equation}\label{eq:sparseApprox_krylov_new}
 [\widetilde{f(A)}^{[d]}]_{ij}:= \begin{cases}
                            [f_{s}^{(\ell)}]_i \text{ for } j \in V_\ell &\text{ if } \udist(i,j) \leq d, \\
                            0 &\text{ if } \udist(i,j) > d,
                           \end{cases}
\end{equation}
where the color classes $V_\ell$ come from a distance-$2d$ coloring of $|G(A)|$.
%A standard way of estimating the overall error of the approximation~\eqref{eq:sparseApprox_krylov_new} is given by bounding the two error contributions, i.e. that of the sparse approximation and the Krylov approximation error separately. This way, one obtains
In the triangle inequality
\begin{equation}\label{eq:combined_error_estimate_standard}
	\|f(A) - \widetilde{f(A)}^{[d]} \| \leq \|f(A) - f(A)^{[d]}\| + \|f(A)^{[d]} - \widetilde{f(A)}^{[d]}\|, 
\end{equation}
Theorem~\ref{the:error_bound_sparse_approx_poly} shows that for the Frobenius norm the first term in~\eqref{eq:combined_error_estimate_standard} can be bounded as
\begin{equation} \label{eq:first_term_Arnoldi}
\|f(A) - \widetilde{f(A)}^{[d]} \|_F \leq 2KCq^d\sqrt{n}.
\end{equation}
For the second term, note that
\begin{equation*}\label{eq:frobenius_twonorm}
\|f(A)^{[d]} - \widetilde{f(A)}^{[d]}\|_F^2 = \sum_{\ell = 1}^m \|f(A)v_\ell - f_s^{(\ell)}\|_2^2,
\end{equation*}
so that inserting~\eqref{eq:arnoldi_error}, we obtain
\begin{eqnarray}
\|f(A)^{[d]} - \widetilde{f(A)}^{[d]}\|_F &\leq& \Big(\sum_{\ell = 1}^m 4\|v_\ell\|_2^2K^2C^2q^{2(s-1)}%{(2(s-1)}
\Big)^{1/2} \nonumber \\
&=& \Big(4K^2C^2q^{2(s-1)}
\sum_{\ell = 1}^m |V_\ell| \Big)^{1/2} \nonumber \\
&=& 2KCq^{(s-1)}\sqrt{n} \label{eq:second_term_Arnoldi}.
\end{eqnarray}
Inequalities \eqref{eq:first_term_Arnoldi} and \eqref{eq:second_term_Arnoldi}, on the one hand, give us the final estimate
\begin{equation}\label{eq:combined_error_estimate_frobenius}
	\|f(A) - \widetilde{f(A)}^{[d]} \|_F \leq 2KC\sqrt{n}(q^d + q^{s-1}). 
\end{equation}
On the other hand, they also \review{show that} after $d+1$ Arnoldi steps we can expect the Krylov approximation error to have the same magnitude as the probing error. If we perform more than $s=d+1$ Arnoldi steps, the overall error is likely to be dominated by the probing error, so that further Arnoldi iterations will have no or little effect on the overall error.
%\af{In particular, since
%\[
%\|f(A) - f(A)^{[d]}\|_F - \|f(A)^{[d]} - \widetilde{f(A)}^{[d]}\|_F \leq	%\|f(A) - \widetilde{f(A)}^{[d]} \|_F, 
%\]
%driving the Arnoldi error towards zero by increasing $s$ will not result in an overall approximation with an error better than $ \|f(A) - f(A)^{[d]}\|$.} 
Choosing $s = d+1$ the overall bound simplifies to
\begin{equation*}
    \|f(A) - \widetilde{f(A)}^{[d]} \|_F \leq 4K\sqrt{n}\varepsilon.
\end{equation*}

As we will illustrate in the numerical experiments in 
Section~\ref{sec:experiments}, performing more than $d+1$ Arnoldi steps 
does typically indeed not lead to any further reduction of the overall error.  Heuristically this can be further motivated as follows: The entries of the vector $f(A)v_\ell$ that we approximate by the Arnoldi iterates \review{do} not contain the exact entries of $f(A)$, but perturbed entries due to the ``mixing'' of contributions from nodes of the same color. Until the $d+1$st iteration of the Arnoldi method, this mixing does not occur in the basis vectors; see Figure~\ref{fig:spread} for an illustration. For $s > d+1$, then, the additional accuracy with which we approximate $f(A)v_\ell$ is spoiled by  the loss of accuracy in the approximation of $f(A)$ due to increased mixing. 

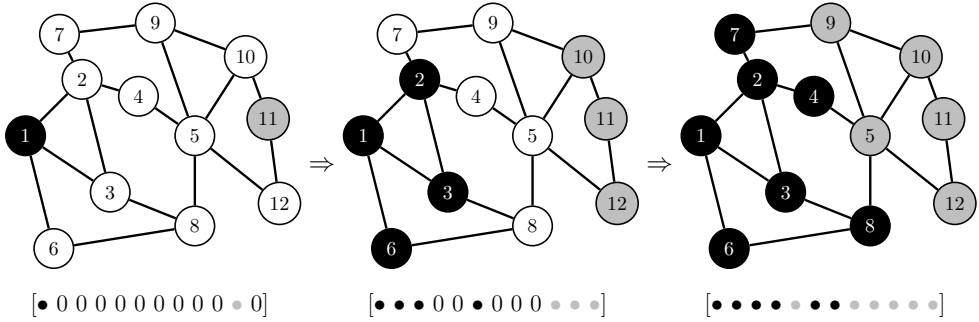
\begin{figure}
\scalebox{.75}{%
\begin{tikzpicture}[baseline={([yshift=-1ex]current bounding box.center)}]
\node[thick,circle,draw=black,fill=black, minimum width=.7cm] (1) at (0,0) {\color{white}1};
\node[thick,circle,draw=black,fill=white, minimum width=.7cm] (2) at (1,1) {\color{black}2};
\node[thick,circle,draw=black,fill=white, minimum width=.7cm] (3) at (1.5,-1) {\color{black}3};
\node[thick,circle,draw=black,fill=white, minimum width=.7cm] (4) at (2,.7) {\color{black}4};
\node[thick,circle,draw=black,fill=white, minimum width=.7cm] (5) at (3,0) {\color{black}5};
\node[thick,circle,draw=black,fill=white, minimum width=.7cm] (6) at (.5,-2) {\color{black}6};
\node[thick,circle,draw=black,fill=white, minimum width=.7cm] (7) at (.6,1.8) {\color{black}7};
\node[thick,circle,draw=black,fill=white, minimum width=.7cm] (8) at (3,-1.6) {\color{black}8};
\node[thick,circle,draw=black,fill=white, minimum width=.7cm] (9) at (2.3,2) {\color{black}9};
\node[thick,circle,draw=black,fill=white, minimum width=.7cm] (10) at (3.9,1.4) {\color{black}10};
\node[thick,circle,draw=black,fill=gray!50!white, minimum width=.7cm] (11) at (4.3,.3) {\color{black}11};
\node[thick,circle,draw=black,fill=white, minimum width=.7cm] (12) at (4.5,-1.2) {\color{black}12};

\node (vec) at (2.2,-3) {\color{black}\large [{\color{black}$\bullet$} 0 0 0 0 0 0 0 0 0 {\color{gray!50!white}$\bullet$} 0]};

\draw[black, very thick] (1) -- (2);
\draw[black, very thick] (1) -- (3);
\draw[black, very thick] (1) -- (6);
\draw[black, very thick] (2) -- (4);
\draw[black, very thick] (2) -- (7);
\draw[black, very thick] (2) -- (3);
\draw[black, very thick] (3) -- (8);
\draw[black, very thick] (4) -- (5);
\draw[black, very thick] (5) -- (8);
\draw[black, very thick] (5) -- (9);
\draw[black, very thick] (6) -- (8);
\draw[black, very thick] (7) -- (9);
\draw[black, very thick] (10) -- (5);
\draw[black, very thick] (10) -- (9);
\draw[black, very thick] (11) -- (10);
\draw[black, very thick] (11) -- (12);
\draw[black, very thick] (12) -- (5);

\end{tikzpicture}}
%\hspace{-.35cm}
$\Rightarrow$
%\hspace{-.35cm}
\scalebox{.75}{%
\begin{tikzpicture}[baseline={([yshift=-1ex]current bounding box.center)}]
\node[thick,circle,draw=black,fill=black, minimum width=.7cm] (1) at (0,0) {\color{white}1};
\node[thick,circle,draw=black,fill=black, minimum width=.7cm] (2) at (1,1) {\color{white}2};
\node[thick,circle,draw=black,fill=black, minimum width=.7cm] (3) at (1.5,-1) {\color{white}3};
\node[thick,circle,draw=black,fill=white, minimum width=.7cm] (4) at (2,.7) {\color{black}4};
\node[thick,circle,draw=black,fill=white, minimum width=.7cm] (5) at (3,0) {\color{black}5};
\node[thick,circle,draw=black,fill=black, minimum width=.7cm] (6) at (.5,-2) {\color{white}6};
\node[thick,circle,draw=black,fill=white, minimum width=.7cm] (7) at (.6,1.8) {\color{black}7};
\node[thick,circle,draw=black,fill=white, minimum width=.7cm] (8) at (3,-1.6) {\color{black}8};
\node[thick,circle,draw=black,fill=white, minimum width=.7cm] (9) at (2.3,2) {\color{black}9};
\node[thick,circle,draw=black,fill=gray!50!white, minimum width=.7cm] (10) at (3.9,1.4) {\color{black}10};
\node[thick,circle,draw=black,fill=gray!50!white, minimum width=.7cm] (11) at (4.3,.3) {\color{black}11};
\node[thick,circle,draw=black,fill=gray!50!white, minimum width=.7cm] (12) at (4.5,-1.2) {\color{black}12};

\node (vec) at (2.2,-3) {\color{black}\large [{\color{black}$\bullet$} {\color{black}$\bullet$} {\color{black}$\bullet$} 0 0 {\color{black}$\bullet$} 0 0 0 {\color{gray!50!white}$\bullet$} {\color{gray!50!white}$\bullet$} {\color{gray!50!white}$\bullet$}]};

\draw[black, very thick] (1) -- (2);
\draw[black, very thick] (1) -- (3);
\draw[black, very thick] (1) -- (6);
\draw[black, very thick] (2) -- (4);
\draw[black, very thick] (2) -- (7);
\draw[black, very thick] (2) -- (3);
\draw[black, very thick] (3) -- (8);
\draw[black, very thick] (4) -- (5);
\draw[black, very thick] (5) -- (8);
\draw[black, very thick] (5) -- (9);
\draw[black, very thick] (6) -- (8);
\draw[black, very thick] (7) -- (9);
\draw[black, very thick] (10) -- (5);
\draw[black, very thick] (10) -- (9);
\draw[black, very thick] (11) -- (10);
\draw[black, very thick] (11) -- (12);
\draw[black, very thick] (12) -- (5);
\end{tikzpicture}}
%\hspace{-.35cm}
$\Rightarrow$
%\hspace{-.35cm}
\scalebox{.75}{%
\begin{tikzpicture}[baseline={([yshift=-1ex]current bounding box.center)}]
\node[thick,circle,draw=black,fill=black, minimum width=.7cm] (1) at (0,0) {\color{white}1};
\node[thick,circle,draw=black,fill=black, minimum width=.7cm] (2) at (1,1) {\color{white}2};
\node[thick,circle,draw=black,fill=black, minimum width=.7cm] (3) at (1.5,-1) {\color{white}3};
\node[thick,circle,draw=black,fill=black, minimum width=.7cm] (4) at (2,.7) {\color{white}4};
\node[thick,circle,draw=black,fill=gray!50!white, minimum width=.7cm] (5) at (3,0) {\color{black}5};
\node[thick,circle,draw=black,fill=black, minimum width=.7cm] (6) at (.5,-2) {\color{white}6};
\node[thick,circle,draw=black,fill=black, minimum width=.7cm] (7) at (.6,1.8) {\color{white}7};
\node[thick,circle,draw=black,fill=black, minimum width=.7cm] (8) at (3,-1.6) {\color{white}8};
\node[thick,circle,draw=black,fill=gray!50!white, minimum width=.7cm] (9) at (2.3,2) {\color{black}9};
\node[thick,circle,draw=black,fill=gray!50!white, minimum width=.7cm] (10) at (3.9,1.4) {\color{black}10};
\node[thick,circle,draw=black,fill=gray!50!white, minimum width=.7cm] (11) at (4.3,.3) {\color{black}11};
\node[thick,circle,draw=black,fill=gray!50!white, minimum width=.7cm] (12) at (4.5,-1.2) {\color{black}12};

\node (vec) at (2.2,-3) {\color{black}\large [{\color{black}$\bullet$} {\color{black}$\bullet$} {\color{black}$\bullet$} {\color{black}$\bullet$} {\color{gray!50!white}$\bullet$} {\color{black}$\bullet$} {\color{black}$\bullet$} {\color{gray!50!white}$\bullet$} {\color{gray!50!white}$\bullet$} {\color{gray!50!white}$\bullet$} {\color{gray!50!white}$\bullet$} {\color{gray!50!white}$\bullet$}]};

\draw[black, very thick] (1) -- (2);
\draw[black, very thick] (1) -- (3);
\draw[black, very thick] (1) -- (6);
\draw[black, very thick] (2) -- (4);
\draw[black, very thick] (2) -- (7);
\draw[black, very thick] (2) -- (3);
\draw[black, very thick] (3) -- (8);
\draw[black, very thick] (4) -- (5);
\draw[black, very thick] (5) -- (8);
\draw[black, very thick] (5) -- (9);
\draw[black, very thick] (6) -- (8);
\draw[black, very thick] (7) -- (9);
\draw[black, very thick] (10) -- (5);
\draw[black, very thick] (10) -- (9);
\draw[black, very thick] (11) -- (10);
\draw[black, very thick] (11) -- (12);
\draw[black, very thick] (12) -- (5);

\end{tikzpicture}}
\caption{Spreading of the nonzero entries in the first three Arnoldi basis vectors, starting with $v_\ell =e_1+e_{11}$. 
\review{Entries to which only the iteration corresponding to node $1$ contributed (and the corresponding nodes of the graph) are marked in black, while entries/nodes to which only the iteration corresponding to node $11$ contributed are marked in gray.}  Because the nodes have a distance of $5$, no mixing occurs in the first 3 basis vectors.}
\label{fig:spread}
\end{figure}

\subsection{%Using the Arnoldi method for 
Approximating the trace% of $f(A)$}\label{subsec:krylov_trace
}

The $s$-step Arnoldi approximation for a bilinear form $v_\ell^Hf(A)v_\ell$ is given by
\begin{equation}\label{eq:arnoldi_bilinear}
v_\ell^H f(A)v_\ell \approx \alpha_s^{(\ell)} := \|v_\ell\|_2^2 e_1^Hf(H_s)e_1.
\end{equation}
%Because of \eqref{eq:arnoldi_polynomial}, the approximation~\eqref{eq:arnoldi_bilinear} is exact for polynomials of degree $s-1$. 
Using the relation between Krylov subspace methods, Gaussian quadrature and moment matching, it has been shown in~\cite{GolubMeurant1994,GolubMeurant2010,Strakos2009}, e.g., that \eqref{eq:arnoldi_bilinear} is exact if $f$ is a polynomial up to degree $2s-1$ when $A$ is Hermitian and up to degree $s$ when $A$ is non-Hermitian.  This leads to the following \review{ theorem which gives a general exposition of results from \cite[Section~3]{BR09} and~\cite[Theorem~4.2]{SaadUbaruJie2017}.}
\begin{theorem}
\review{Let $f$ and $A$ fulfill the assumptions of Theorem~\ref{the:polynomial_approx_decay}. Then the} error of the approximation~\eqref{eq:arnoldi_bilinear} satisfies
\begin{equation*}
|v_\ell^H f(A)v_\ell - \alpha_s^{(\ell)}| \, \leq \, 2 \|v_\ell\|_2^2 KC \cdot 
\begin{cases} 
q^{2s-1} & \text{when } A \text{ is Hermitian}, \\
q^{s} & \text{otherwise},
\end{cases}
\end{equation*}
where $C, q$ are as in Theorem~\ref{the:polynomial_approx_decay} and $K$ is the constant from Theorem~\ref{the:crouzeix}.
\end{theorem}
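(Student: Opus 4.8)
The plan is to exploit the exactness of the Gauss-type quadrature rule underlying~\eqref{eq:arnoldi_bilinear} for polynomials, and then to compare $f$ against a single best polynomial approximant on $\W(A)$ twice: once for the full matrix $A$ and once for the small Hessenberg matrix $H_s$. First I would fix the optimal polynomial $p \in \Pi_r$ realizing $\max_{z \in \W} |f(z) - p(z)| \leq Cq^r$ from~\eqref{eq:polynomial_approximation_decay}, choosing degree $r = 2s-1$ in the Hermitian case and $r = s$ otherwise---exactly the degrees for which the quadrature~\eqref{eq:arnoldi_bilinear} is stated to be exact. Writing $v_\ell = \|v_\ell\|_2\, Q_s e_1$ for the normalized starting vector of the Arnoldi process, exactness means $v_\ell^H p(A) v_\ell = \|v_\ell\|_2^2\, e_1^H p(H_s) e_1$, so that inserting $\pm p$ produces the clean splitting
\begin{equation*}
v_\ell^H f(A) v_\ell - \alpha_s^{(\ell)} = v_\ell^H\bigl(f(A) - p(A)\bigr)v_\ell + \|v_\ell\|_2^2\, e_1^H\bigl(p(H_s) - f(H_s)\bigr)e_1,
\end{equation*}
in which the polynomial contributions cancel and only two matrix-function residuals remain.

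Second, I would bound the two remaining terms separately. For the first, the Cauchy--Schwarz inequality gives $|v_\ell^H(f(A)-p(A))v_\ell| \leq \|v_\ell\|_2^2\, \|f(A)-p(A)\|_2$, and Theorem~\ref{the:crouzeix} applied to $g = f - p$ bounds $\|f(A)-p(A)\|_2 \leq K \max_{z \in \W(A)}|f(z)-p(z)|$. For the second term, the key structural observation is that $H_s = Q_s^H A Q_s$ with $Q_s$ having orthonormal columns, so $\W(H_s) \subseteq \W(A) \subseteq \W$; hence the maximum of $|f - p|$ over $\W(H_s)$ is again at most $Cq^r$, and a second application of Theorem~\ref{the:crouzeix}, now to $H_s$, yields $\|f(H_s)-p(H_s)\|_2 \leq K_s \max_{z \in \W(A)}|f(z)-p(z)|$, where $K_s$ denotes the Crouzeix constant for $H_s$.

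Finally I would combine these estimates, which produces the factor $K + K_s$ multiplying $\|v_\ell\|_2^2\, Cq^r$, and then argue that $K + K_s \leq 2K$: when $A$ is Hermitian, $H_s$ inherits Hermiticity from $A$, so both constants equal $1$ and $K + K_s = 2$; in the general case $K = 1+\sqrt{2}$ while $K_s \leq 1+\sqrt{2}$ always, so again $K + K_s \leq 2K$. Substituting $r = 2s-1$ or $r = s$ and invoking~\eqref{eq:polynomial_approximation_decay} then delivers the two branches of the claimed bound. I expect the only genuinely non-routine step to be recognizing that the numerical range of $H_s$ nests inside that of $A$, which is precisely what lets a single best-approximation estimate on $\W$ control both the full-matrix and the projected small-matrix residuals simultaneously; everything else reduces to Cauchy--Schwarz and two applications of Theorem~\ref{the:crouzeix}.
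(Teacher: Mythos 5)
Your proposal is correct and follows essentially the same route as the paper's own proof: fix the best polynomial approximant of exactly the degree for which the quadrature~\eqref{eq:arnoldi_bilinear} is exact, split the error by inserting $\pm p$, apply Cauchy--Schwarz and Theorem~\ref{the:crouzeix} to both $A$ and $H_s$, and use $\W(H_s) \subseteq \W(A)$ (from $H_s = Q_s^H A Q_s$ with $Q_s$ having orthonormal columns) to control the projected term. If anything, you are slightly more careful than the paper in tracking the Crouzeix constant of $H_s$ separately as $K_s$ before arguing $K + K_s \leq 2K$, whereas the paper simply asserts that "the same bound applies" to the $H_s$ term.
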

\begin{proof}
We consider just the Hermitian case; the non-Hermitian case follows analogously. Let $p_{2s-1}^\ast(z) \in \Pi_{2s-1}$ be such that
\begin{equation}\label{eq:assumption}
\max_{z \in \W(A)} |f(z)-p_{2s-1}^\ast(z)| \leq Cq^{2s-1}.
\end{equation}
As the approximation~\eqref{eq:arnoldi_bilinear} is exact for $v_\ell^H p_{2s-1}^\ast(A) v_\ell$, we have
$$
|v_\ell^Hf(A)v_\ell - \alpha_s^{(\ell)} | %\|v_\ell\|_2^2 e_1^Hf(H_s)e_1|
\, = \, |v_\ell^H(f(A)-p_{2s-1}^\ast(A))v_\ell - \|v_\ell\|_2^2 e_1^H(f(H_s)-p_{2s-1}^\ast(H_s))e_1|.
$$
From this, using the triangular inequality and the Cauchy-Schwarz inequality, we get
\begin{eqnarray*}
\lefteqn{|v_\ell^Hf(A)v_\ell - \|v_\ell\|_2^2 e_1^Hf(H_s)e_1|} \qquad & & \\ 
&\leq& \|v_\ell\|_2 \,  \|f(A)v_\ell - p_{2s-1}^\ast(A)v_\ell\|_2 + \|v_\ell\|_2^2 \, \|f(H_s)e_1-p_{2s-1}^\ast(H_s)e_1\|_2 \\
&\leq& \|v_\ell\|_2^2 \, \|f(A)-p_{2s-1}^\ast(A)\|_2 + \|v_\ell\|_2^2 \, \|f(H_s)-p_{2s-1}^\ast(H_s)\|_2 %\\
%&\leq& 2\|v_\ell\|_2^2KCq^{2s-1}.
\end{eqnarray*}
Now, $\|f(A)-p_{2s-1}^\ast(A)\|_2 \leq Cq^{2s-1}$ due to \eqref{eq:assumption} and \eqref{eq:crouzeix}, and the same bound applies to $\|f(H_s)-p_{2s-1}^\ast(H_s)\|_2$, since  $\W(H_s) \subseteq \W(A)$ due to $H_s = \review{Q_s^H}A\review{Q_s}$ with $\review{Q_s}$ having orthonormal columns.
\end{proof}

Thus,  choosing $s = \left\lceil\frac{d+1}{2}\right\rceil$ or $s = d$, we obtain the bound
\begin{equation*}\label{eq:krylov_trace_bound}
|\T(f(A)) - \sum\limits_{\ell=1}^m \alpha_s^{\ell}| \leq KC q^d \sum\limits_{\ell=1}^m \|v_\ell\|_2^2 = KC q^d\sum\limits_{\ell=1}^m |V_l| = 2KCnq^d.
\end{equation*}

%Similarly to the situation for the sparse approximation, 
We are therefore in the order of magnitude of the bound for the probing error given in Theorem~\ref{the:boundsum} for probing vectors coming from a distance-$d$ coloring of $G(A)$ after $d$ (or $\approx d/2$ if $A$ is Hermitian) Arnoldi steps.
%\af{Performing more than $d$ $(s+1)$???? Arnoldi steps introduces again a mixing of the contributions of nodes from the same color class, thus counterbalancing the increased accuracy of the Arnoldi approximations to $v_\ell f(A)v_\ell$.}     

\section{Numerical experiments}\label{sec:experiments}
In this section, we perform various numerical experiments both on model problems and on matrices coming from applications to investigate the quality of our error bounds, with particular emphasis on their scaling behavior with respect to growing matrix dimension $n$ and increasing probing distance $d$. \review{All experiments were implemented in \texttt{MATLAB R2020a}.} Unless explicitly stated otherwise, we compute the exact quantities $f(A)v_\ell$ and $v_\ell^Tf(A)v_\ell$ used to obtain the exact error of our approximations to machine precision, \review{using the \texttt{MATLAB} built-in functions \texttt{inv}, \texttt{sqrtm} and \texttt{logm}}.
 
\begin{figure}
\begin{subfigure}{.49\textwidth}
	\includegraphics[width=\textwidth]{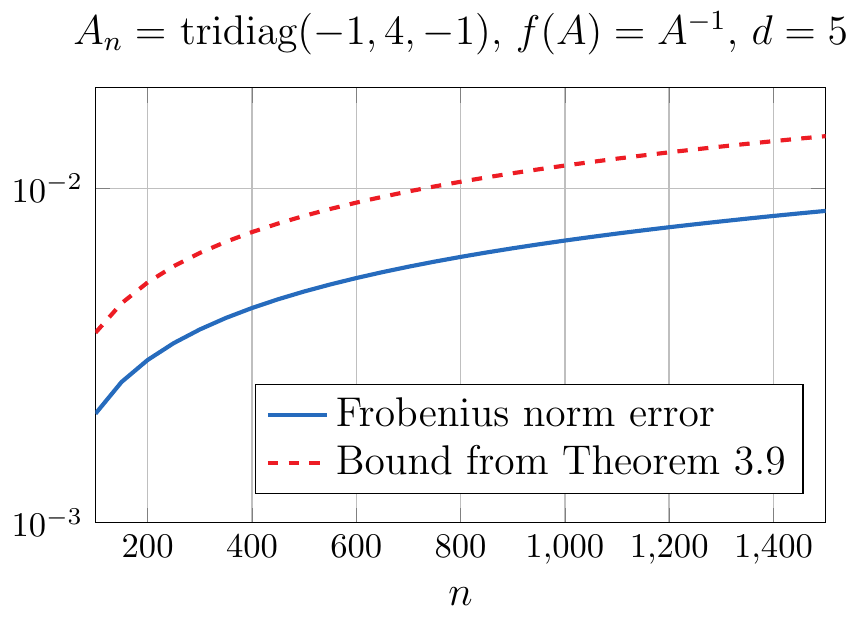}
\end{subfigure}
\begin{subfigure}{.49\textwidth}
	\includegraphics[width=\textwidth]{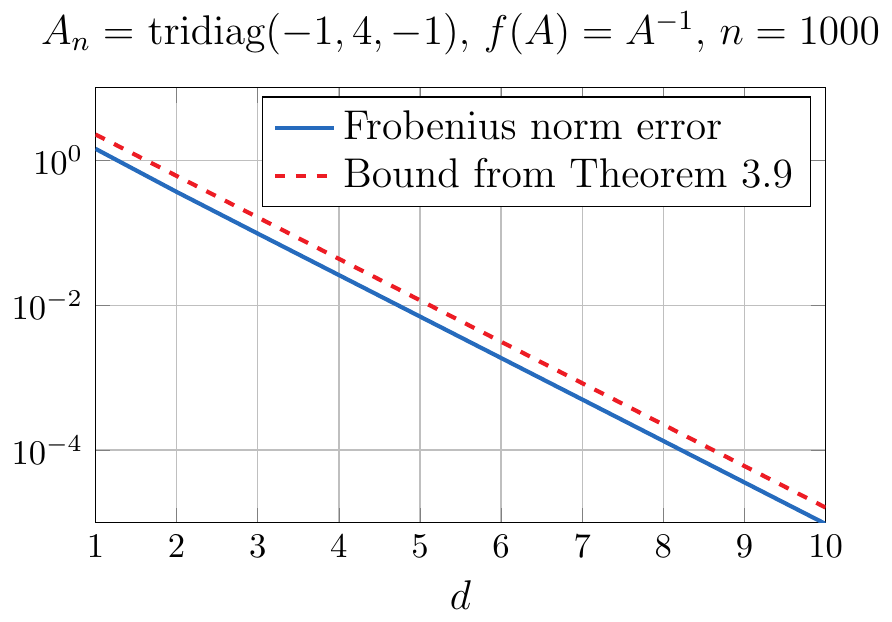}
\end{subfigure}
\caption{Actual Frobenius norm error and error bound for the sparse approximation~\eqref{eq:probing_sparse_approximation} corresponding to the coloring~\eqref{eq:coloring_banded} for the matrix $A_n = \tridiag(-1,4,-1) \in \Cnn$ and $f(z) = 1/z$ for varying $n$ (left) and $d$ (right).}\label{fig:tridiag_inv_sparse}
\end{figure}

\subsection{Tridiagonal model problem}
As a first, simple test example, following \cite{BenziSimoncini2015}, we consider the family of tridiagonal matrices $A_n = \tridiag(-1,4-1) \in \Cnn$. 
The spectra of these matrices satisfy $\spec(A_n) \subset [2,6]$ independently of $n$. %The spectrum of these matrices satisfies $\spec(A_n) \subset [2,6]$ independent of $n$, which follows, e.g., from Ger\v{s}gorin's theorem. 
We consider the two functions $f(z) = 1/z$ and $f(z) = z^{-1/2}$ in the following and we always use the banded matrix coloring~\eqref{eq:coloring_banded} with $\beta = 1$. %coloring~\ref{eq:coloring_banded}
In a first experiment, we compute sparse approximations of $A_n^{-1}$ for varying dimension $n$ while $d = 5$ is fixed and for varying $d$ while $n = 1000$ is fixed. From~\cite[Theorem 2.4]{DemkoMossSmith1984}, the entries of $A_n^{-1}$ exhibit an exponential decay with $C=\frac{1}{2}$ and $q = \tfrac{\sqrt{3}-1}{\sqrt{3+1}}$.
The actual error norms together with our error bounds from Theorem~\ref{the:error_bound_sparse_approx_poly} are depicted in Figure~\ref{fig:tridiag_inv_sparse}. In both cases, the bounds are quite %very
tight and closely follow the actual error curve. We repeat the experiment for the inverse square root $A_n^{-1/2}$. The entries of this matrix function again decay exponentially, with $C=\sqrt{2}$ and $q = \tfrac{\sqrt{3}-1}{\sqrt{3+1}}$, see~\cite[Theorem 4]{FrommerSchimmelSchweitzer2018_1}. This time, we compare the actual error to the $1$-norm error bound of Corollary~\ref{cor:1norm_banded}, because the decay bound from~\cite[Theorem 4]{FrommerSchimmelSchweitzer2018_1} is not based on a polynomial approximation property of the form~\eqref{eq:polynomial_approximation_decay}. The results of this experiment are shown in Figure~\ref{fig:tridiag_invsqrt_sparse}. Again we see a good agreement between the actual error and the error bound, although it is not quite as sharp as before, overestimating the error by between one and two orders of magnitude. Still, the qualitative behavior is captured quite accurately. In particular, the $1$-norm error is independent of $n$, as predicted by our theoretical results.

\begin{figure}
\begin{subfigure}{.49\textwidth}
		\includegraphics[width=\textwidth]{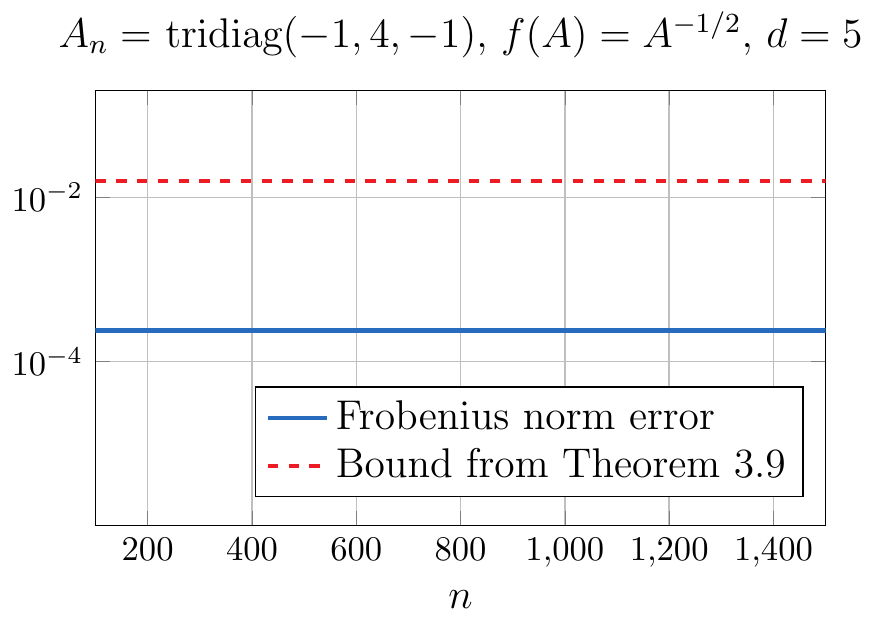}
\end{subfigure}
\begin{subfigure}{.49\textwidth}
	\includegraphics[width=\textwidth]{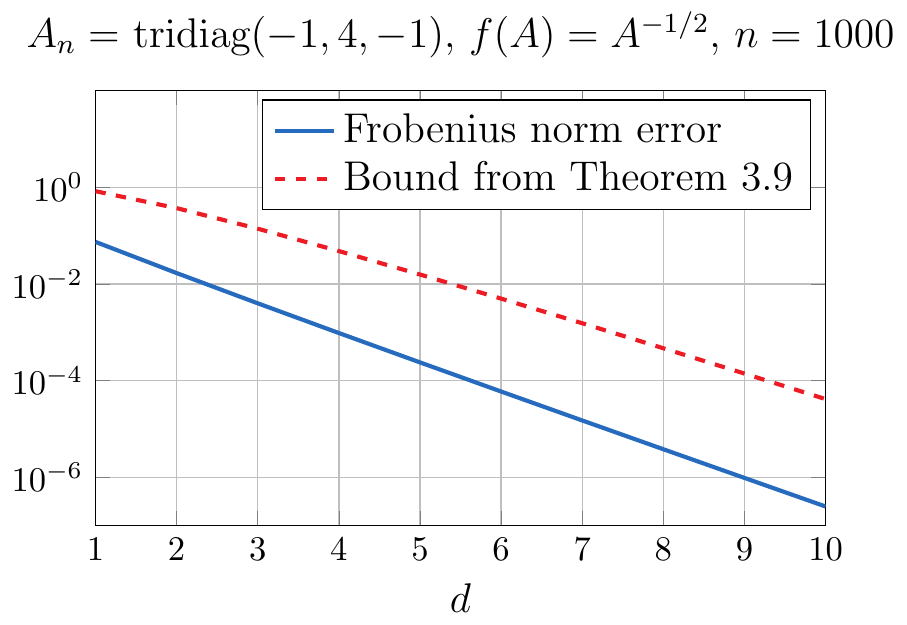}
\end{subfigure}
\caption{Actual  $1$-norm error and error bound for the sparse approximation~\eqref{eq:probing_sparse_approximation} corresponding to the coloring~\eqref{eq:coloring_banded} for the matrix $A_n = \tridiag(-1,4,-1) \in \Cnn$ and $f(z) = z^{-1/2}$ for varying $n$ (left) and $d$ (right).}\label{fig:tridiag_invsqrt_sparse}
\end{figure}

\begin{figure}
\begin{subfigure}{.49\textwidth}
	\includegraphics[width=\textwidth]{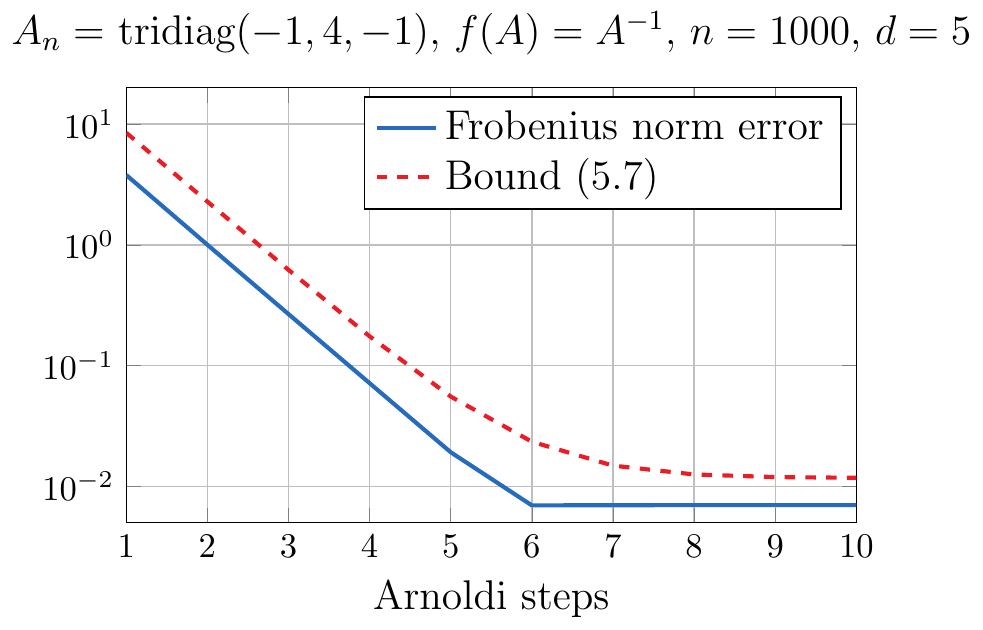}
\end{subfigure}
\begin{subfigure}{.49\textwidth}
	\includegraphics[width=\textwidth]{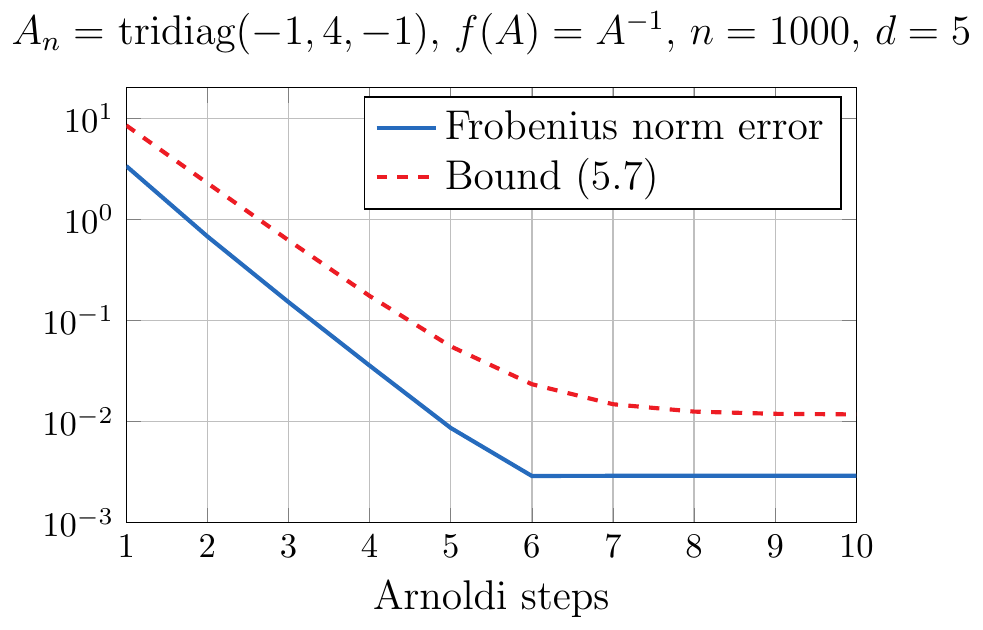}
\end{subfigure}
		
\caption{Actual Frobenius norm error and error bound (in dependence of the number of Arnoldi steps) for the sparse approximation~\eqref{eq:sparseApprox_krylov_new} corresponding to the coloring~\eqref{eq:coloring_banded} for the matrix $A_n = \tridiag(-1,4,-1) \in \Cnn$ and $f(z) = 1/z$ \review{(left) and $f(z) = z^{-1/2}$ (right)}  with $n = 1000$, and $d = 5$.}\label{fig:tridiag_invsqrt_sparse_arnoldi}
\end{figure}

We also use this example to illustrate the influence of the number of Arnoldi steps used for approximating $f(A)v_\ell$ in the approximation~\eqref{eq:sparseApprox_krylov_new}, see Figure~\ref{fig:tridiag_invsqrt_sparse_arnoldi}. We fix $n = 1000$ and $d = 5$ and compute the approximation error resulting when $s$ Arnoldi steps per vector are performed, for $s = 1,\dots, 2d$ and compare it to the bound~\eqref{eq:combined_error_estimate_frobenius}. \review{Note that for $f(z) = z^{-1/2}$ the quantity ~\eqref{eq:combined_error_estimate_frobenius} must be considered an estimate for the error rather than a bound, as the decay estimates used to obtain it are not based on a polynomial approximation property.}
We observe that \review{the bounds/estimates are} in very good agreement with the actual error, and further, that the approximation error stagnates after $s = d+1$, %from $s = d+1$ on,
confirming our intuition explained in Section~\ref{subsec:krylov_sparseapprox} that from this point on, the increased accuracy of the Krylov approximation is counteracted by the increased mixing between contributions of nodes from the same color class, so that no further decrease of the overall approximation error can be expected.

\begin{figure}
\begin{subfigure}{.49\textwidth}
	\includegraphics[width=\textwidth]{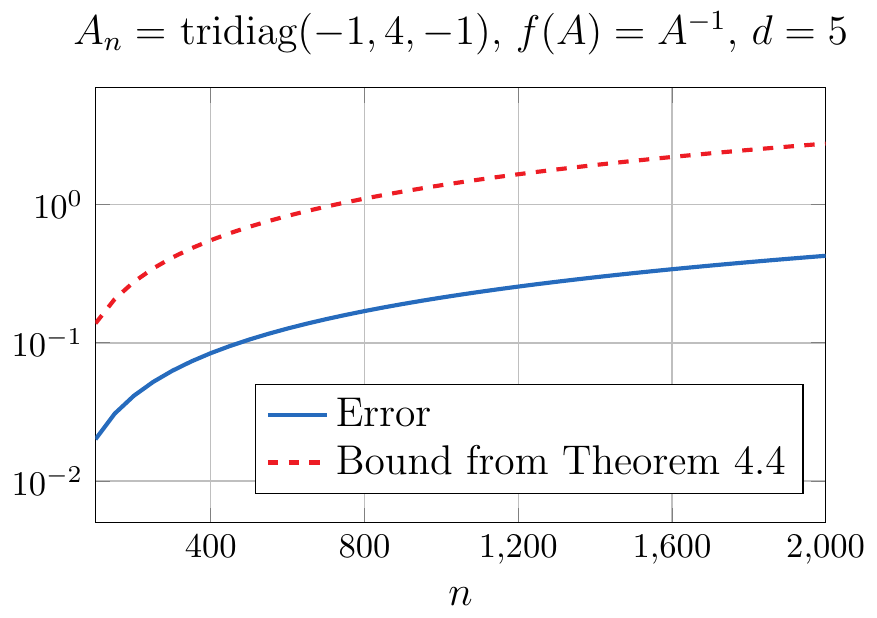}
\end{subfigure}
\begin{subfigure}{.49\textwidth}
	\includegraphics[width=\textwidth]{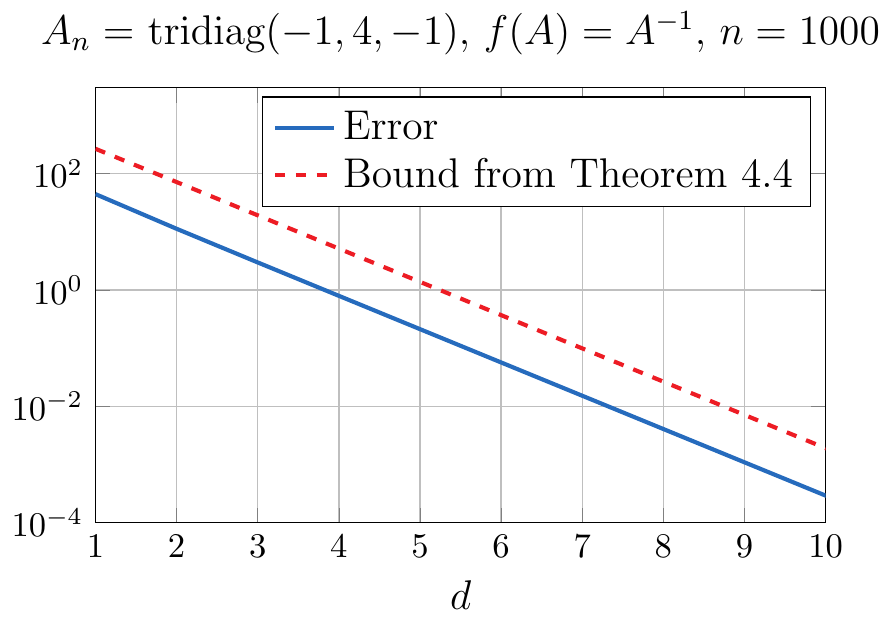}
\end{subfigure}
\begin{subfigure}{.49\textwidth}
	\includegraphics[width=\textwidth]{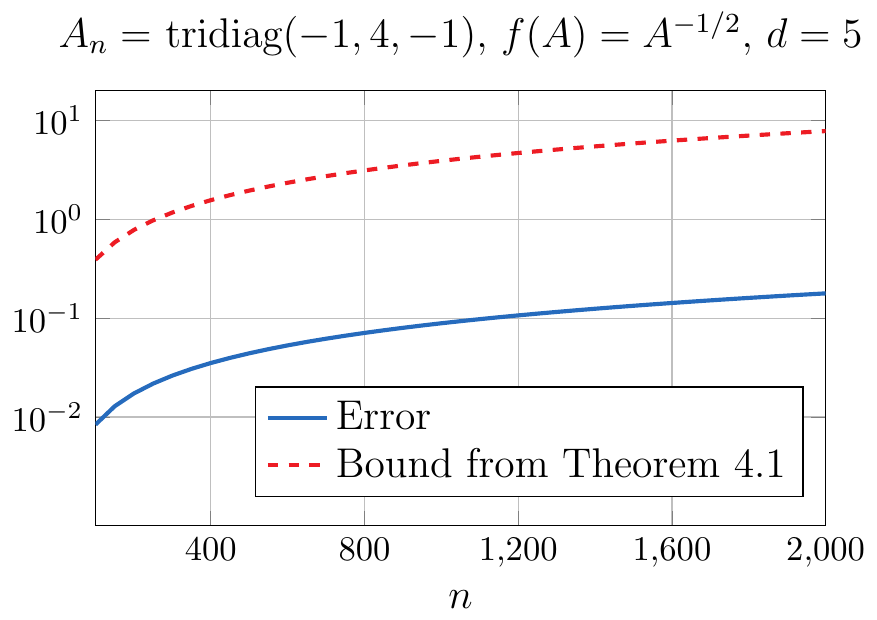}
\end{subfigure}
\begin{subfigure}{.49\textwidth}
	\includegraphics[width=\textwidth]{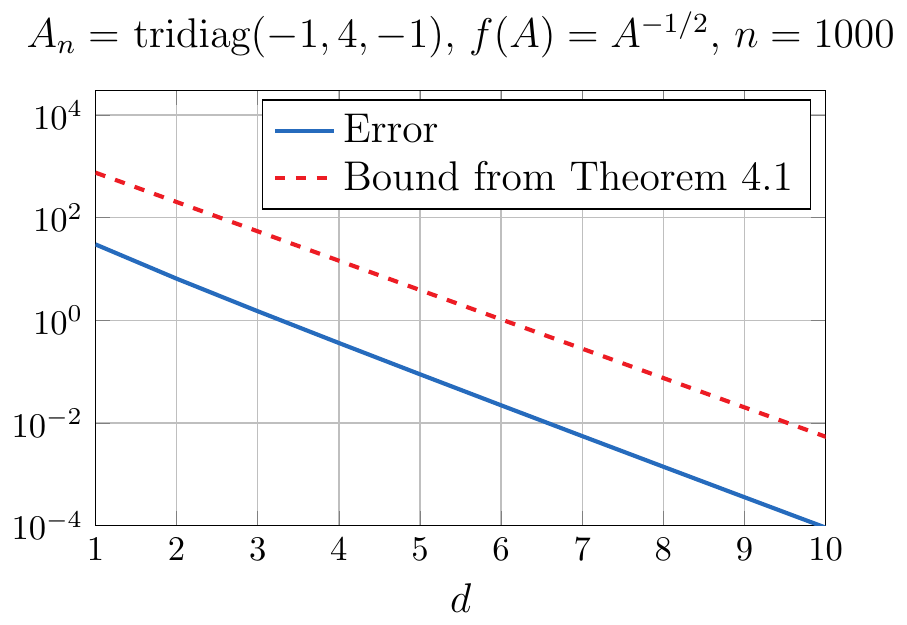}
\end{subfigure}
	\caption{Actual absolute error and error bound for the trace estimate~\eqref{eq:probing_trace_approximation} corresponding to the coloring~\eqref{eq:coloring_banded} for the matrix $A_n = \tridiag(-1,4,-1) \in \Cnn$ and $f(z) = 1/z$ (top row) and $f(z) = z^{-1/2}$ (bottom row). Results for varying $n$ are shown in the left column while results for varying $d$ are shown in the right column.\label{fig:tridiag_trace}}
\end{figure}

\review{Next, we turn to estimating} the trace for both matrix functions, using exactly the same experimental parameters as before and compare the actual error to the bound~\eqref{eq:errorbound2} from Theorem~\ref{the:boundsum}. Note that we could alternatively use the bound from Theorem~\ref{the:errorbound_banded} which is tailored to banded matrices. Both bounds almost agree here, the latter one being slightly less sharp, by a factor $\tfrac{1}{1-q^d}$. Figure~\ref{fig:tridiag_trace} shows that, as %As 
expected, the results are very similar to what can be observed in the context of computing a sparse approximation and we again observe a very good qualitative and quantitative agreement between the bounds and the actual error.

\begin{figure}
\begin{subfigure}{.49\textwidth}
		\includegraphics[width=\textwidth]{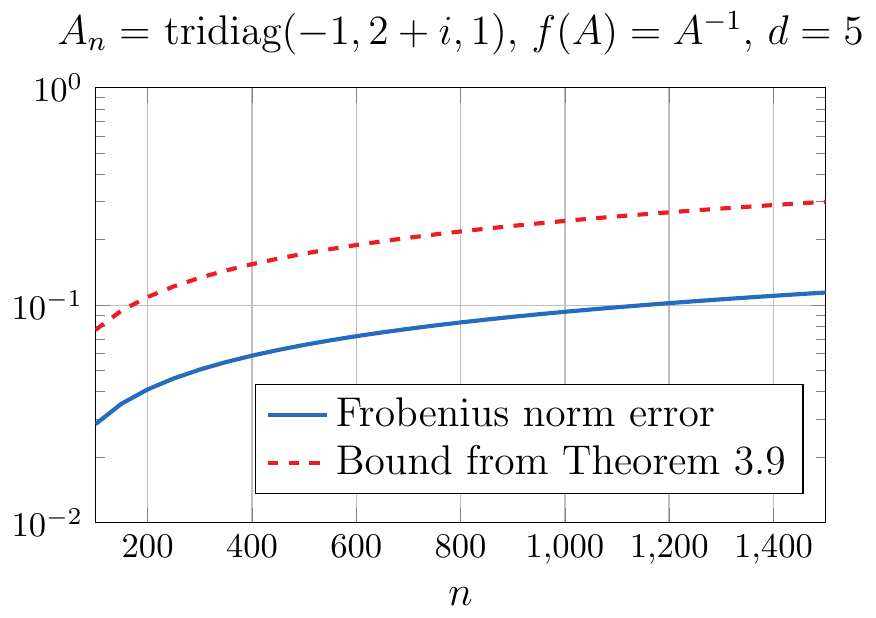}
\end{subfigure}
\begin{subfigure}{.49\textwidth}
	\includegraphics[width=\textwidth]{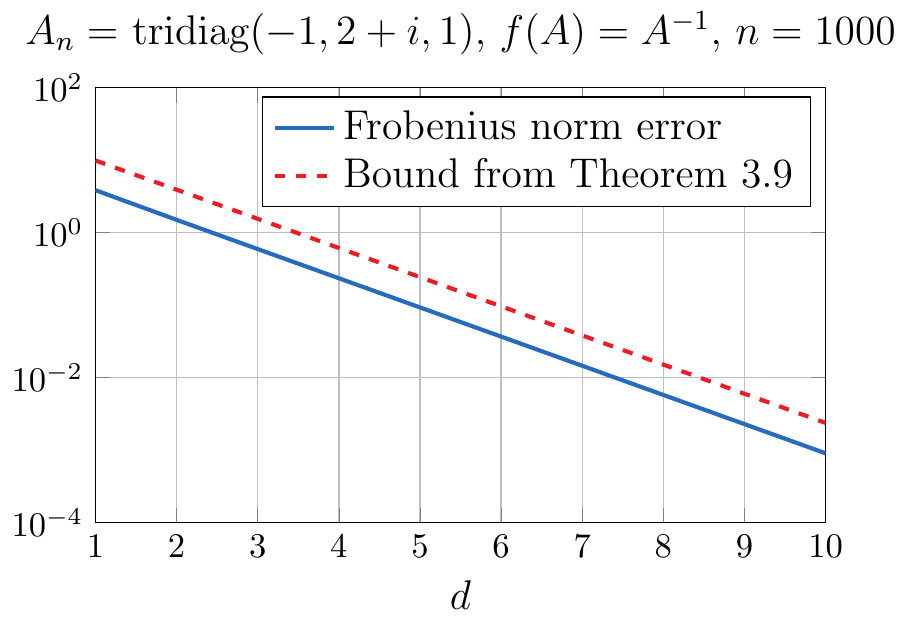}
\end{subfigure}
\caption{\review{Actual Frobenius norm error and error bound for the sparse approximation~\eqref{eq:probing_sparse_approximation} corresponding to the coloring~\eqref{eq:coloring_banded} for the shifted skew-Hermitian matrix $A_n = \tridiag(-1,2+i,1) \in \Cnn$ and $f(z) = 1/z$ for varying $n$ (left) and $d$ (right).}}\label{fig:skew_inv_sparse}
\end{figure}

\begin{figure}
\begin{subfigure}{.49\textwidth}
	\includegraphics[width=\textwidth]{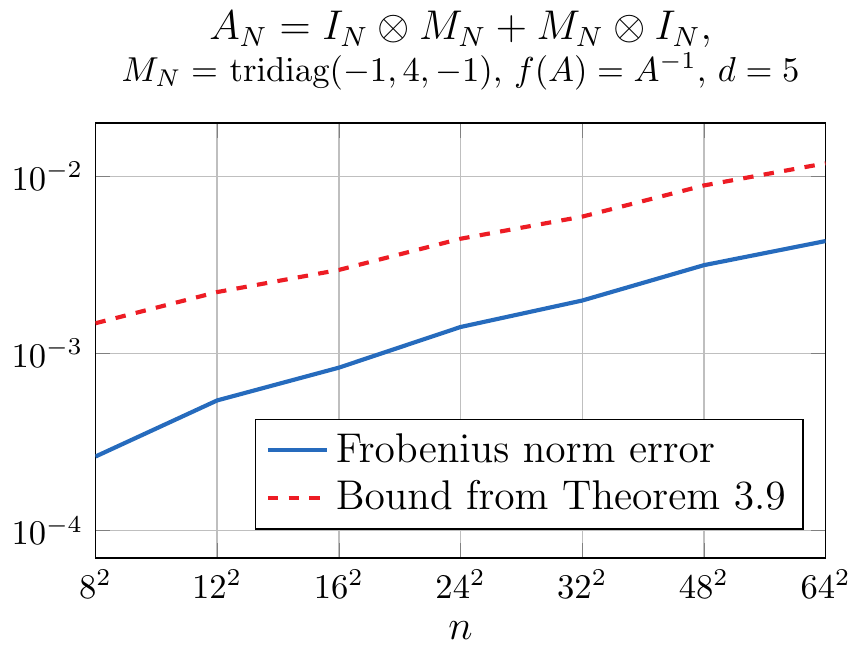}
\end{subfigure}
\begin{subfigure}{.49\textwidth}
	\includegraphics[width=\textwidth]{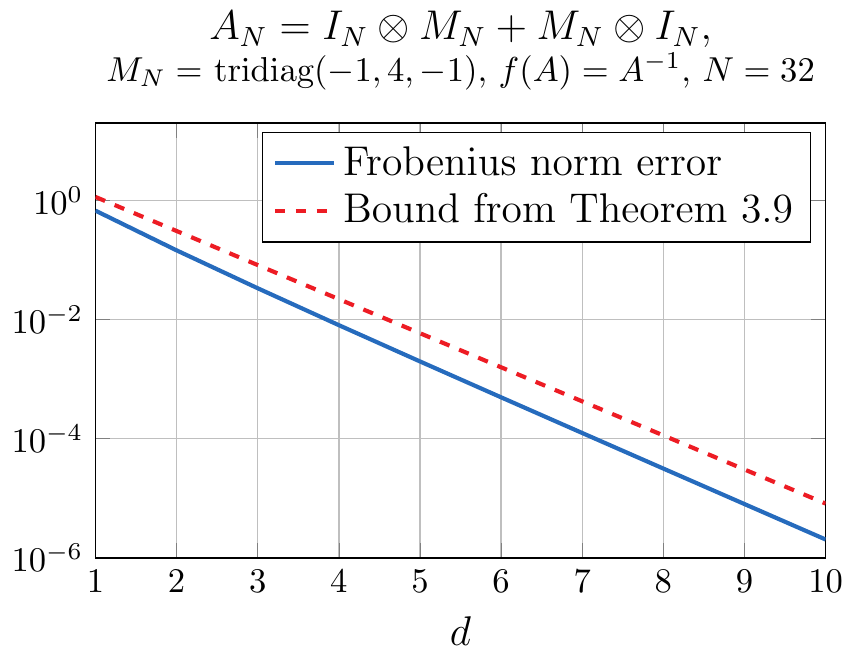}
\end{subfigure}
\caption{Actual Frobenius norm error and error bound for the sparse approximation~\eqref{eq:probing_sparse_approximation} corresponding to the coloring~\eqref{eq:coloring_lattice} for the matrix $A_N = I_N \otimes M_N + M_N \otimes I_N \in C^{N^2 \times N^2}$, where $M_N = \tridiag(-1,4,-1) \in \C^{N \times N}$ and $f(z) = 1/z$ for varying $N$ (left) and $d$ (right).
\label{fig:laplace_inv_sparse}}
\end{figure}

\review{To conclude this experiment, we demonstrate our results for functions of non-Hermitian matrices. We take the matrices $A_n = \tridiag(-1,2+i,1) \in \Cnn$, which is skew-Hermitian except for a  real-valued diagonal shift. Decay bounds for this class of matrices were given in~\cite[Theorem 3]{FrommerSchimmelSchweitzer2018_1}. We compute a coloring~\eqref{eq:coloring_banded} with $\beta = 1$ and use this to compute a sparse approximation of $A_n^{-1}$ for varying dimension $n$ while $d = 5$ is fixed, and for varying $d$ while $n = 1000$ is fixed. The results are given in Figure~\ref{fig:skew_inv_sparse}. As for the Hermitian case reported in Figure~\ref{fig:tridiag_inv_sparse}, the bounds accurately predict the scaling behavior of the actual error. In a similar manner, the results given in Figures~\ref{fig:tridiag_invsqrt_sparse}-\ref{fig:tridiag_trace} for the Hermitian case carry over to these non-Hermitian matrices, too, and we refrain from reporting them explicitly here.}

\subsection{Shifted two-dimensional Laplace operator} 

As a second model problem, we consider the family of matrices $A_N \in \C^{N^2 \times N^2}$ arising from discretization of the Laplace equation with homogeneous Dirichlet boundary conditions on a regular square grid\review{, with diagonal shifted by $4$, giving 
$$A_N = I_N \otimes M_N + M_N \otimes I_N \in C^{N^2 \times N^2},$$
where $M_N = \tridiag(-1,4,-1) \in \C^{N \times N}$  is the tridiagonal matrix from the previous experiment. Due to the shift, we obtain an $N$-independent decay in $f(A_N)$.} Applying a shift to the Laplacian matrix is common practice for obtaining \review{model problems with strong exponential decay}; see, e.g,~\cite{BenziSimoncini2015,Stathopoulos2013}, were the same (or similar) families of matrices were considered.

We have $\spec(A_N) \subset [4,12]$ independent of $N$ so that~\cite[Theorem 2.4]{DemkoMossSmith1984} guarantees an exponential decay of the entries of $A_N^{-1}$ with $C=\frac{1}{4}$ and $q = \tfrac{\sqrt{3}-1}{\sqrt{3+1}}$. We determine the color classes according to the optimal coloring for two-dimensional lattices from~\cite{FertinGodardRaspaud2003}. We again begin by approximating $A_N^{-1}$ for increasing values of $N$ while keeping $d = 5$ fixed and compare the actual error norm to the bound from Theorem~\ref{the:error_bound_sparse_approx_poly}. The results of this experiment are presented on the left-hand side of Figure~\ref{fig:laplace_inv_sparse} and we observe that the approximation error scales linearly with $N = \sqrt{n}$, as predicted by our theory. The magnitude of the error is overestimated by about one order of magnitude. On the right-hand side of Figure~\ref{fig:laplace_inv_sparse} the results for an experiment with varying $d$ and fixed $N = 32$ are given. Again, we observe good qualitative and quantitative agreement between the error bound and actual error norm.

\begin{figure}
\begin{subfigure}{.49\textwidth}
	\includegraphics[width=\textwidth]{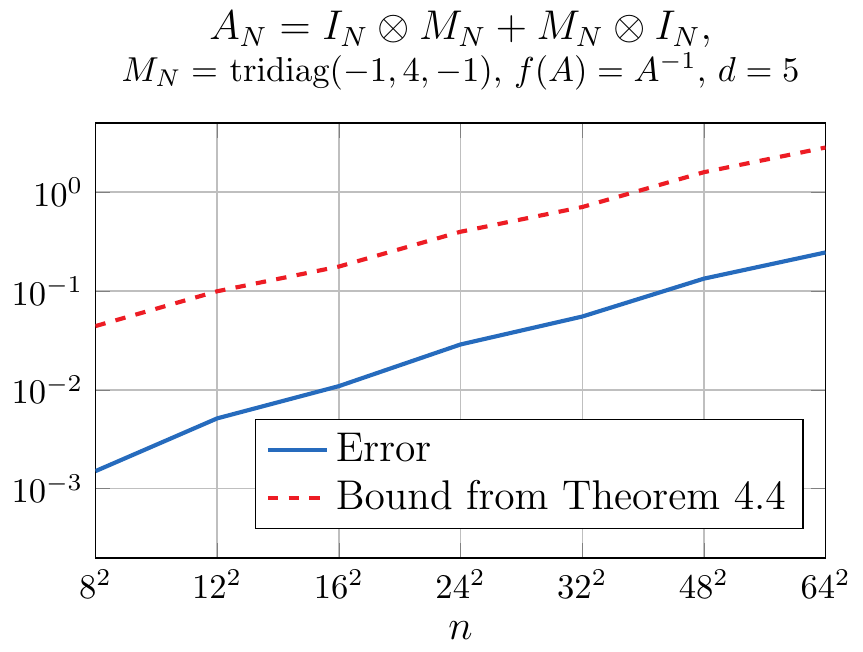}
\end{subfigure}
\begin{subfigure}{.49\textwidth}
	\includegraphics[width=\textwidth]{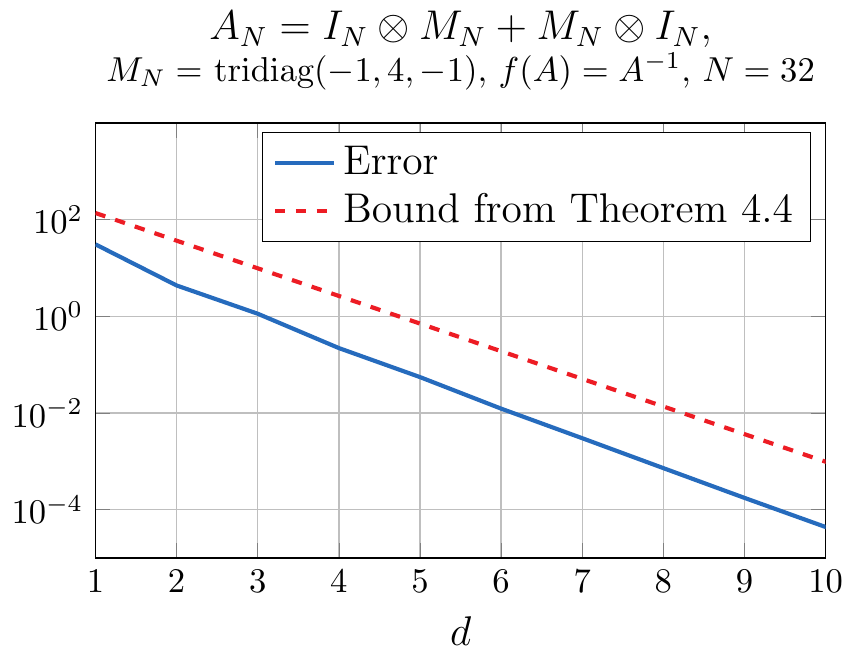}
\end{subfigure}
	\caption{Actual absolute error and error bound for the trace estimate~\eqref{eq:probing_trace_approximation} corresponding to the coloring~\eqref{eq:coloring_lattice} for the matrix $A_N = I_N \otimes M_N + M_N \otimes I_N \in C^{N^2 \times N^2}$, where $M_N = \tridiag(-1,4,-1) \in \C^{N \times N}$ and $f(z) = 1/z$ for varying $N$ (left) and $d$ (right).}\label{fig:laplace_inv_trace}
\end{figure}

\begin{figure}
\begin{subfigure}{.49\textwidth}
	\includegraphics[width=\textwidth]{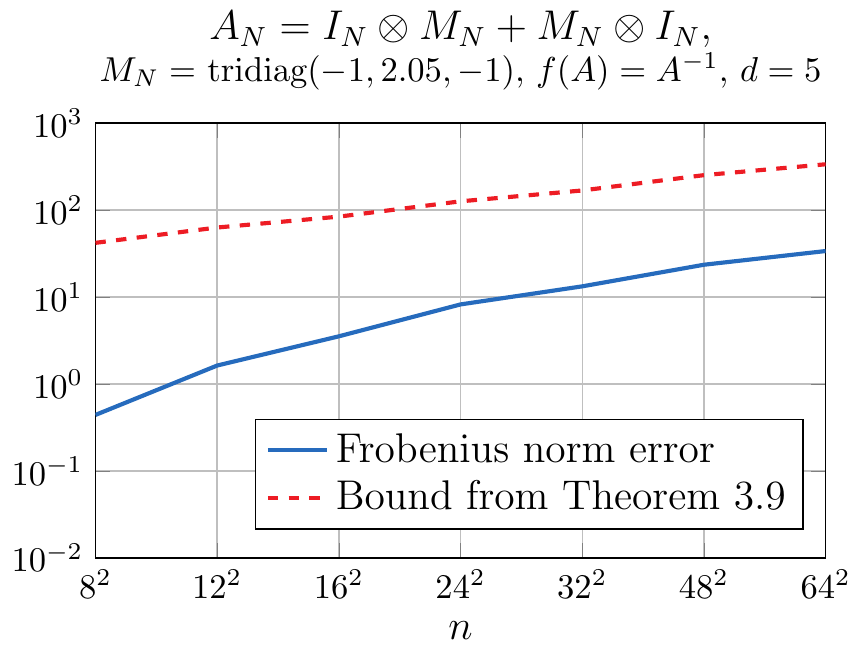}
\end{subfigure}
\begin{subfigure}{.49\textwidth}
	\includegraphics[width=\textwidth]{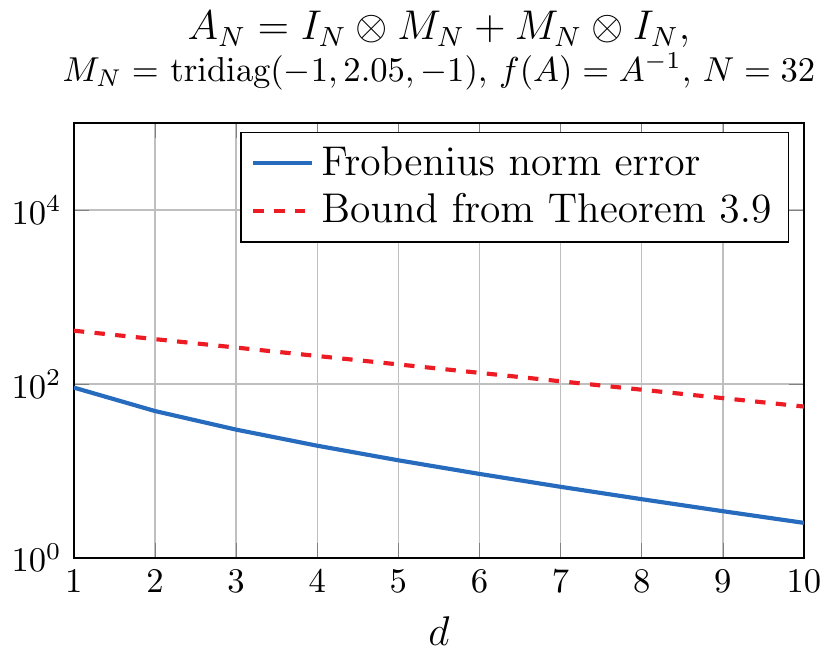}
\end{subfigure}
\caption{\review{Actual Frobenius norm error and error bound for the sparse approximation~\eqref{eq:probing_sparse_approximation} corresponding to the coloring~\eqref{eq:coloring_lattice} for the matrix $A_N = I_N \otimes M_N + M_N \otimes I_N \in C^{N^2 \times N^2}$, where $M_N = \tridiag(-1,2.05,-1) \in \C^{N \times N}$ and $f(z) = 1/z$ for varying $N$ (left) and $d$ (right).}}\label{fig:laplace_inv_sparse_ill}
\end{figure}

Next, in Figure~\ref{fig:laplace_inv_trace}, we also approximate $\tr(A_N^{-1})$, using the same experimental setup as for the sparse approximation and compare to the bound from Theorem~\ref{the:boundsum}. Again, we could also have used the lattice-specific bound from Theorem~\ref{the:errorbound_lattice}, which differs from that of Theorem~\ref{the:boundsum} by a factor $\tfrac{2}{(1-q^d)^2} \approx 2$ in this case. The results of this experiment are shown in Figure~\ref{fig:laplace_inv_trace}. The scaling behavior for growing $N$ and $d$ is again captured very accurately, although we overestimate the actual error norm by quite a large margin.

\review{%The shifted Laplacian matrix that we used above is of course very well-conditioned due to the large shift. 
In order to investigate the influence of the conditioning of the matrix on the quality of our bounds, we conduct the first experiment again, this time shifting the discrete Laplace matrix by only $0.1$. This makes the matrices more ill-conditioned and leads to a much slower decay of the entries in $A^{-1}$. As can be seen from the results shown in Figure~\ref{fig:laplace_inv_sparse_ill}, this leads to much larger errors and slower error decrease in the probing method, which is, however, still captured quite well by our bounds. It is thus not primarily the quality of the bounds that is negatively influenced by the conditioning of $A$, but rather the performance of the probing method. %Therefore, in this setting, one would typically use a different approach for estimating $A^{-1}$ or its trace.
}

\subsection{Thresholded covariance matrix}
For a next experiment, we consider the problem of computing a sparse approximation of an inverse covariance matrix, a task frequently occurring in uncertainty quanitification; see~\cite{BekasCurioniFedulova2009}. We use the  example matrix from~\cite{TangSaad2012}: Let $A_{N^2} = \cov(N,\alpha,\beta) \in \C^{N^2 \times N^2}$ be a covariance matrix corresponding to integer points $(x_i,y_i)$ arranged as a regular $N \times N$ grid with respect to a decaying, thresholded covariance function. More precisely, %Precisely,
$$[A]_{ij} = \begin{cases} 
	\left(1 - \frac{\|(x_i,y_i) - (x_j,y_j)\|_2}{\alpha}\right)^\beta & \text{ if } \|(x_i,y_i) - (x_j,y_j)\|_2 \leq \alpha, \\
	0 & \text{ otherwise.}
\end{cases}$$

\begin{figure}
\begin{subfigure}{.49\textwidth}
	\includegraphics[width=\textwidth]{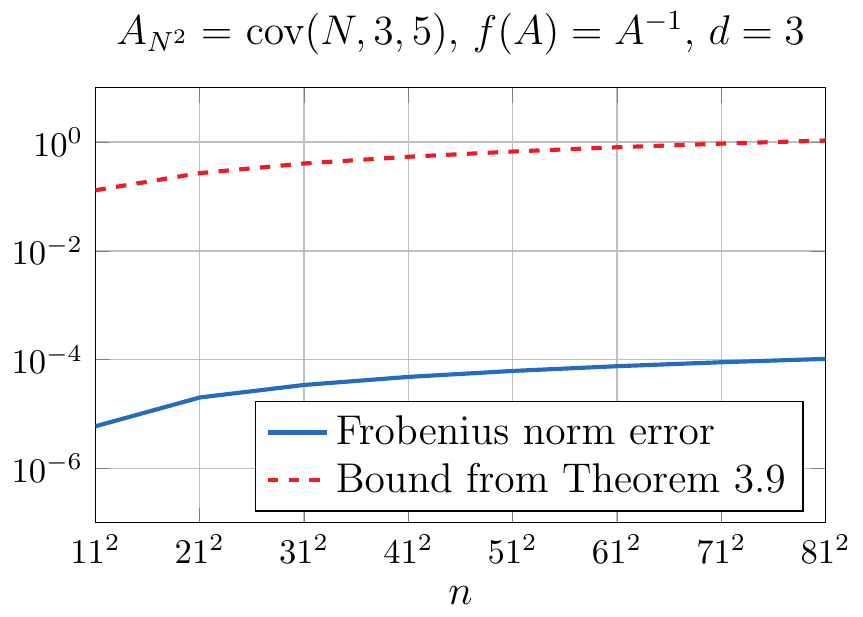}
\end{subfigure}
\begin{subfigure}{.49\textwidth}
	\includegraphics[width=\textwidth]{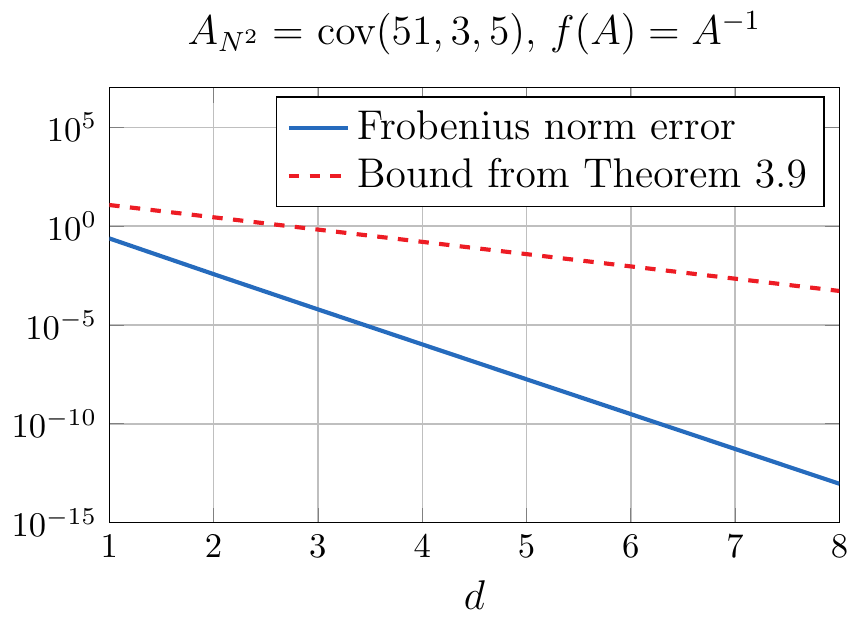}
\end{subfigure}

\begin{subfigure}{.49\textwidth}
	\includegraphics[width=\textwidth]{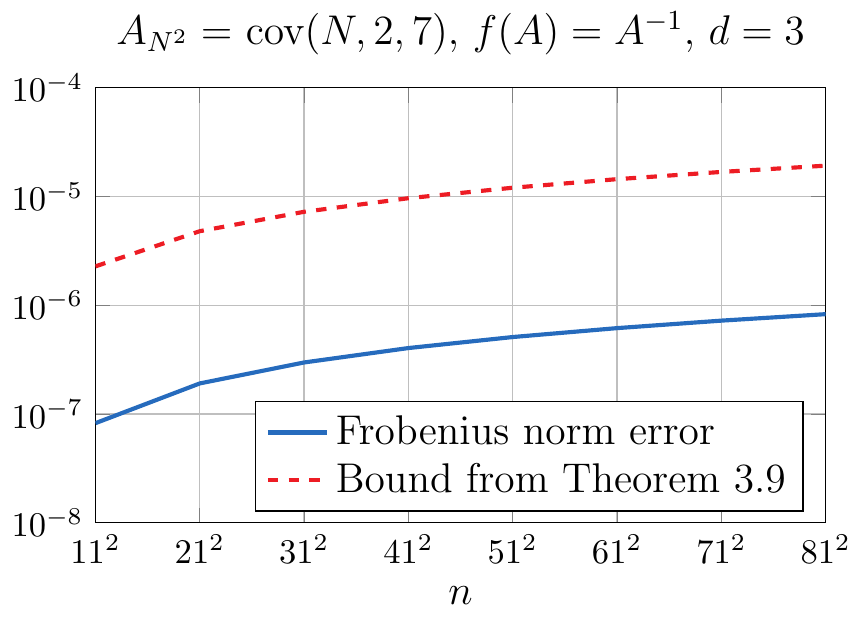}
\end{subfigure}
\begin{subfigure}{.49\textwidth}
	\includegraphics[width=\textwidth]{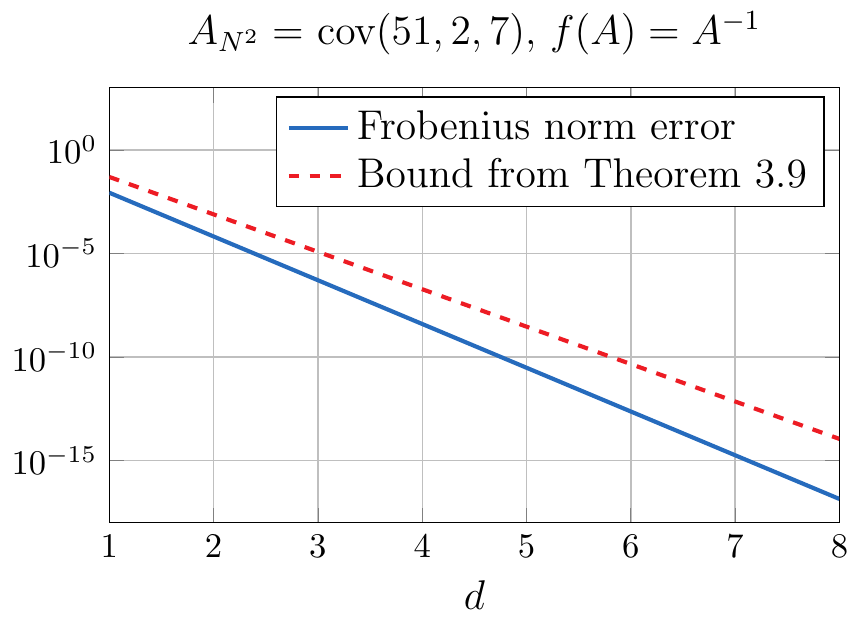}
\end{subfigure}
\caption{Actual Frobenius norm error and error bound for the sparse approximation~\eqref{eq:probing_sparse_approximation} corresponding to a greedy coloring for the matrix $A_{N^2} = \cov(N,\alpha,\beta) \in \C^{N^2 \times N^2}$ and $f(z) = 1/z$ for the parameter sets $\alpha = 3, \beta = 5$ (top) and $\alpha = 2, \beta = 7$ (bottom). Results for varying $n$ are shown in the left column while results for varying $d$ are shown in the right column.}\label{fig:cov_uq}
\end{figure}

We use the two parameter sets $\alpha = 3, \beta = 5$ and $\alpha = 2, \beta = 7$ and compute a sparse approximation for $A^{-1}$. \review{These test matrices are extremely well-conditioned. For the first parameter set, the spectral interval is approximately given by $[0.64, 1.72]$, leading to $C~\approx 2$ and $q\approx 0.24$, while for the second parameter set, the spectral interval is approximately given by $[0.96, 1.04]$, leading to $C~\approx 2$ and $q\approx 0.016$.}

Again, we perform one experiment in which we vary $n = N^2$ while $d = 3$ is fixed and one experiment in which we vary $d$ while $n=51^2$ is fixed. The resulting Frobenius norms of the error together with our bounds are given in Figure~\ref{fig:cov_uq}. For the first parameter set, $\alpha = 3, \beta = 5$, we observe that while the qualitative behavior for growing $n$ is accurately reproduced by our bound, we overestimate the actual error by several orders of magnitude. Thus, the bounds do give a valuable insight into the scaling behavior of the method but are not useful for judging whether the computed approximation is accurate enough for the application at hand. For growing $d$, we also observe that the slope of the error curve is much steeper than predicted by our bound, showing that also the qualitative behavior of the actual error is not accurately captured here. For the second parameter set, $\alpha = 2, \beta = 7$, our bounds look much better. For varying $n$, we still get an accurate impression of the qualitative scaling behavior while overestimating the error norm only %while also only overestimating the error norm 
by about one order of magnitude. 
For varying $d$, we still do not get a completely accurate reflection of the slope of the error curve, but the slopes agree much better than before.

\subsection{Maximum likelihood estimation for Gaussian Markov Random Fields}
In a last experiment, we consider the problem of maximum likelihood estimation for Gaussian Markov Random Fields (GMRFs). A GMRF is a multivariate joint Gaussian distribution defined with respect to some underlying graph, where each random variable corresponds to a node of the graph. The GMRF can be described by the positive definite and sparse precision matrix $A \in \Rnn$ (which is the inverse of the covariance matrix $\Sigma$ of the Gaussian distribution). Often, the precision matrix is parameterized by some unknown parameter $\phi$, i.e., $A = A(\phi)$ which can be estimated by a maximum likelihood estimator. Let $x\in \Rn$ be a sample from the Gaussian distribution. The log-likelihood of this sample is then given by the functional
\begin{equation}\label{eq:loglikelihood}
\log p(x\mid\phi) = \log\det(A(\phi)) - x^TA(\phi)x + G,
\end{equation}
where $G$ is a constant independent of $\phi$; see, e.g.,~\cite{Han2015}. The computationally demanding part in the evaluation of~\eqref{eq:loglikelihood} is the evaluation of the log-determinant. Due to the %the  beautiful 
relation
$$\log\det(A(\phi)) = \tr\log(A(\phi)),$$
the log-determinant can be estimated by the probing approximation~\eqref{eq:probing_trace_approximation} applied to the matrix logarithm. 

\begin{figure}
\begin{subfigure}{.49\textwidth}
	\includegraphics[width=\textwidth]{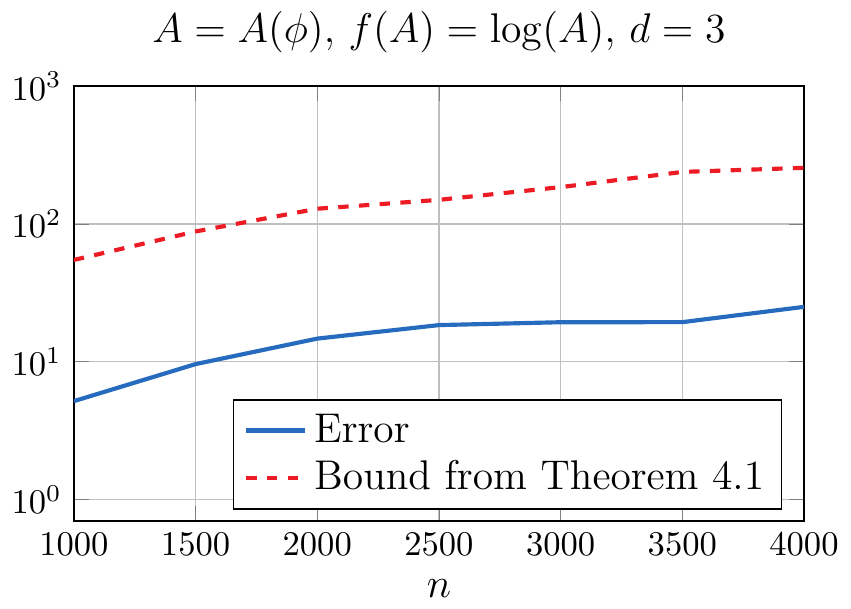}
\end{subfigure}
\begin{subfigure}{.49\textwidth}
	\includegraphics[width=\textwidth]{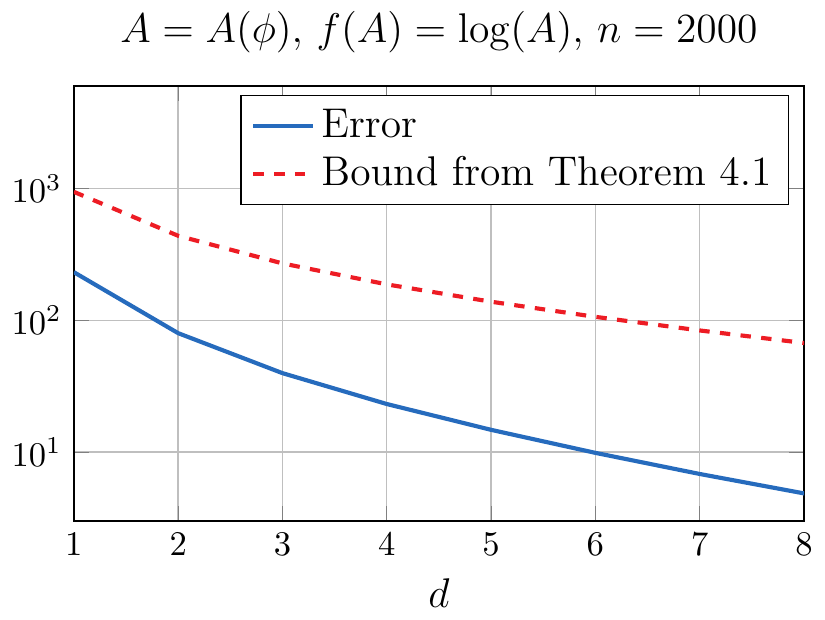}
\end{subfigure}
	\caption{Actual absolute error and error bound for the trace estimate~\eqref{eq:probing_trace_approximation} corresponding to the coloring~\eqref{eq:coloring_banded} for the precision matrix $A(\phi)$ of a GMRF and $f(z) = \log(z)$ for varying $n$ (left) and $d$ (right)}\label{fig:gmrf}
\end{figure}

We consider the GMRF model from~\cite{Pettitt2002}. Given a set of $n$ points $s_i \in [0, 1]$, we define a Gaussian random variable $x_i, i = 1,\dots,n$ at each point. The entries of the precision matrix are
\begin{equation}\label{eq:precision_matrix}
[A(\phi)]_{ij} =
\begin{cases}
1 + \phi\sum_{k = 1, k \neq i}^n \chi^\delta_{ij} & \text{ if } i = j, \\
-\phi \chi^\delta_{ij} & \text{ otherwise,}
\end{cases}
\end{equation}
where $\chi^\delta$ is given by
\begin{equation}\label{eq:distance_matrix}\nonumber
\chi^\delta_{ij} =
\begin{cases}
1 & \text{ if } \|s_i-s_j\|_2 < \delta, \\
0 & \text{ otherwise,}
\end{cases}
\end{equation}
where $\delta$ is a distance threshold which determines which points $s_i$ are connected in the graph underlying the GMRF. The resulting matrix is unstructured and sparse, but can be reordered to a matrix with rather narrow bandwidth by the Cuthill-McKee reordering, so that the coloring~\eqref{eq:coloring_banded} can be used. 

In our experiment, reported in Figure~\ref{fig:gmrf}, we fix $\phi = 20$ and use $\delta = 0.02$ when $n=1000$. For other values of $n$, we scale $\delta$ accordingly so that the average number of \review{nonzeros} per row and the bandwidth stay approximately constant for all values of $n$. In contrast to the previous experiments, we now mimick a situation that one typically faces in practice, namely that no explicit expressions for $C$ and $q$ in~\eqref{eq:generalDecaybound} are known, e.g., because the extremal eigenvalues of $A(\phi)$ are not known. In this case, one can obtain heuristic decay estimates by computing one (or a few) columns of $\log(A(\phi))$ (e.g., by a Krylov subspace method) and then estimating $C$ and $q$ from the observed decay pattern.

First, we vary $n$ between $1000$ and $5000$ while keeping $d=3$ fixed. Then, we fix $n=2000$ and vary $d$ between $1$ and $10$. \review{This results in matrices with spectral interval in $[1, 1300]$ and heuristically determined parameters of $C\approx 0.05$ and $q \approx 0.85$}. We compare the actual error of the probing approximation for the trace of the logarithm to the bound from Theorem~\ref{the:errorbound_banded} for banded matrices, using the estimated values of $C$ and $q$ computed from a single column of $\log (A(\phi))$. In both cases, we can observe a good qualitative agreement between our bound and the actual error.

\section{Conclusions}\label{sec:conclusion}
We have presented a detailed a priori error analysis of probing methods for the computation of sparse approximations and trace estimates of matrix functions, with a special emphasis on graph coloring based probing and matrix functions that exhibit an exponential decay. As illustrated in several numerical experiments, our error bounds accurately predict the scaling behavior of the error with respect to the matrix dimension $n$ or the coloring distance $d$. A particularly interesting observation in this context is that the error of the trace estimates decreases with exponent $d$, while the error of sparse approximations decreases only with exponent $\frac{d}{2}$. In addition to these error bounds for practical algorithms, we have also proven a new result on the existence of sparse approximations of matrix functions, improving on known results from the literature.
%An interesting topic for future research is the development of sharp a posteriori error bounds that can be used in probing methods to find out whether the computed approximation is accurate enough. 
While our results typically give a %very 
good idea of the qualitative behavior of the actual error, they sometimes severely overestimate the actual error. %its order of magnitude. Sharp a posteriori estimates seem particularly interesting in the context of hierarchical probing methods that allow to re-use the quantities computed so far if one detects that the resulting approximation is not accurate enough.
Possible directions for future research include developing further ideas to improve the quality of the error bounds and looking at new approaches for efficient distance-$d$ coloring algorithms for appropriate classes of graphs.
%Another possible research direction is the efficient determination of distance-$d$ colorings. While we have presented efficient coloring algorithms for several important special cases, in particular for banded matrices and matrices corresponding to a regular lattice, in other situations we have to resort to a simple greedy algorithm which can surely be improved upon.

\appendix

\section{Proof of Theorem~\ref{the:lattice_col} and Lemma~\ref{lemma:levelsets}}\label{sec:appendix}

\subsection{Proof of Theorem~\ref{the:lattice_col}}
Since for every node $w=(w^{[1]},\ldots,w^{[D]})$ we have $\widetilde{w^{[k]}} \in \{0,\ldots,d\}$ for $k=1,\ldots,D$, we know that the coloring
$$ \col(w)= \left (\sum\limits_{k=0}^{D-1} \widetilde{w^{[k]}} (d+1)^k \right ) + 1$$
produces at most $(d+1)^D$ colors. Now assume $\col(w)=\col(v)$ for nodes $w\neq v$. We want to show that $\dist(v,w)=\|v-w\|_1 > d$.
Because of 
$$\widetilde{w^{[k]}} = w^{[k]} \bmod (d+1)$$
we have $w^{[k]}=(d+1)a+\widetilde{w^{[k]}}$ and $v^{[k]}=(d+1)b+\widetilde{v^{[k]}}$ for some integers $a,b\geq 0$, and since $\col(w)=\col(v)$ we have $\widetilde{w^{[k]}}=\widetilde{v^{[k]}}$ for all $k=1,\ldots,D$. Since $w\neq v$ there exists at least one $k$ such that $w^{[k]}=(d+1)a+\widetilde{w^{[k]}} \neq  (d+1)b+\widetilde{v^{[k]}} = v^{[k]}$ which is equivalent to $a\neq b$ for $d\geq 0$. By fixing such a $k$ we obtain
$$\dist(w,v)=\|w-v\|_1 \geq |w^{[k]}-v^{[k]}| = (d+1)|a-b| \geq d+1$$
which proves the assertion. \cvd

\subsection{Proof of Lemma~\ref{lemma:levelsets}}
%In the following, we present a combinatorial proof of Lemma~\ref{lemma:levelsets} as well as a remark concerning the sharpness of the resulting bound.
%\begin{proof} 
From \eqref{eq:size_of_distance_set_in_grid}
%We already introduced the identity
%$$L_D(d):=|\{z\in \Z^D: \|z\|_1 \leq d\}|=\sum\limits_{k=0}^D \binom {D}{k} \binom{d+D-k}{D}$$
%from \cite[Theorem 2.7]{BeckRobins2015} in Section~\ref{sec:coloring}. With this formula 
we obtain
 $$\sizeL_D^=(d)=\sizeL_D(d)-\sizeL_D(d-1)=\sum\limits_{k=0}^D \binom {D}{k} \binom{d+D-k-1}{D-1},$$
where we used $\binom{n+1}{k+1}=\binom{n}{k}+\binom{n}{k+1}.$

We will now use a proof technique called \emph{double counting} to prove that 
\begin{equation}\label{eq1}
\sum\limits_{k=0}^D \binom {D}{k} \binom{d+D-k-1}{D-1}
\end{equation}
is equal to
\begin{equation}\label{eq2}
 \sum\limits_{k=0}^{D-1} \binom{D}{k} \binom{d-1}{D-1-k} 2^{D-k}.
\end{equation}
For this, we first give a combinatorial interpretation of %equation 
\eqref{eq1}, then formulate an equivalent statement which at last results in %equation 
\eqref{eq2}.

Let $X=\{X_1,\ldots,X_D\}$ be a set with $D$ elements and let $Y=\{Y_1,\ldots,Y_{d-1}\}$ be a set with $d-1$ elements with $X\cap Y=\emptyset$. Then \eqref{eq1} counts the number of ways for choosing subsets $A\subseteq X$ and $B\subseteq X \cup Y$ with $|B|=D-1$ and $A\cap B=\emptyset$. This can be seen as %follows.
follows: If $0\leq k\leq D$ is the number of elements in $A$, then $\binom{D}{k}$ counts the number of ways for choosing $A$.  Since $A\cap B=\emptyset$ there are $D+(d-1)-k$ elements left for the set $B$. Thus, the number of ways for choosing $B$ with $|B|=D-1$ is given by $\binom{d+D-k-1}{D-1}$. The sum over the number of elements in $A$ gives \eqref{eq1}.

Now, choosing such a $B\subseteq X \cup Y$ with $|B|=D-1$ and $A\cap B=\emptyset$ is equivalent to choosing subsets $N\subseteq X$ and $M\subseteq Y$ such that $|M|+|N|=D-1$ and $(N\cup M)\cap A = \emptyset$. Hence, we now count the number %ways of 
of ways for choosing subsets $A\subseteq X$, $N\subseteq X$ and $M\subseteq Y$ with $|M|+|N|=D-1$ and $(N\cup M)\cap A = \emptyset$. If $1\leq k \leq D-1$ is the number of elements in $M$, then there are $\binom{D}{k}$ ways for choosing $M$. The number of ways for choosing the left $D-1-k$ elements of $N$ out of $Y$ is given by $\binom{d-1}{D-1-k}$. Since $(N\cup M)\cap A = \emptyset$ there are $D-k$ elements left for $A$, i.e., there are $2^{D-k}$ ways for choosing $A$. The sum over the number of elements in $M$ gives \eqref{eq2}.

As a last step, we need to bound \eqref{eq2}, where we use 
$%$$
\binom{n}{k} = %\frac{n!}{(n-k)!k!} = \frac{n}{n-k}  \frac{(n-1)!}{(n-k-1)!k!} =
\frac{n}{n-k}\binom{n-1}{k}$, %$
$\binom {n}{k} \leq \frac{n^k}{k!}$ and $2^n\leq (n+1)!$. % which can be easily shown by induction. 
We then have 
\begin{align*}
  \sum\limits_{k=0}^{D-1} \binom{D}{k} \binom{d-1}{D-1-k} 2^{D-k} &= \sum\limits_{k=0}^{D-1} \frac{D}{D-k} \binom{D-1}{k} \binom{d-1}{D-1-k} 2^{D-k} \\
  &\leq D \sum\limits_{k=0}^{D-1} \frac 1{D-k} \binom{D-1}{k} \frac{(d-1)^{D-1-k}}{(D-1-k)!} 2^{D-k} \\
  &= 2D \sum\limits_{k=0}^{D-1} \binom{D-1}{k} \frac{(d-1)^{D-1-k}}{(D-k)!} 2^{D-k-1} \\
  &\leq 2D \sum\limits_{k=0}^{D-1} \binom{D-1}{k} (d-1)^{D-k-1}\\
  &=2Dd^{D-1},
\end{align*}
where the last equality comes %stems 
from the binomial formula %theorem 
for $((d-1)+1)^{D-1}$. \cvd
%\end{proof}

%\begin{remark}
%We want to remark that 
We want to remark that the estimate from Lemma~\ref{lemma:levelsets} tends to severely overestimate the actual size of the level sets for larger values of $D$. %This can, however, be 
This is exclusively due  %to be attributed 
to the constant factor $2D$, while the factor $d^{D-1}$ is actually the sharpest one possible. %By carefully examining the proof above, one can deduce that asymptotically, for fixed $D$ and growing $d$, it holds
Asymptotically, for fixed $D$ and growing $d$, we have
\[\sizeL_d^= \sim \frac{2^D}{(D-1)!}d^{D-1}.\]
This can be seen by carefully examining the proof above, noting that the asymptotic behavior is governed by the term corresponding to $k=0$, i.e., $\binom{d-1}{D-1} 2^{D}$ and % together with the fact 
that asymptotically, for growing $d$ we have %it holds 
$\binom {d-1}{D-1} \sim \frac{d^{D-1}}{(D-1)!}$.
%\end{remark}

\bibliographystyle{siam}
\bibliography{lit}

\newcommand{\noopsort}[1]{} \newcommand{\printfirst}[2]{#1}
  \newcommand{\singleletter}[1]{#1} \newcommand{\switchargs}[2]{#2#1}
\begin{thebibliography}{10}

\bibitem{BeckRobins2015}
{\sc M.~Beck and S.~Robins}, {\em Computing the Continuous Discretely:
  Integer-Point Enumeration in Polyhedra}, Undergrad. Texts Math., Springer New
  York, 2015.

\bibitem{BR09}
{\sc B.~Beckermann and L.~Reichel}, {\em Error estimation and evaluation of
  matrix functions via the {F}aber transform}, SIAM J. Numer. Anal., 47 (2009),
  pp.~3849--3883.

\bibitem{BekasCurioniFedulova2009}
{\sc C.~Bekas, A.~Curioni, and I.~Fedulova}, {\em Low cost high performance
  uncertainty quantification}, in Proceedings of the 2nd Workshop on High
  Performance Computational Finance, 2009, pp.~1--8.

\bibitem{benzi20}
{\sc M.~Benzi}, {\em Some uses of the field of values in numerical analysis},
  Bollettino dell'Unione Matematica Italiana,  (2020).

\bibitem{BenziBoito2014}
{\sc M.~Benzi and P.~Boito}, {\em Decay properties for functions of matrices
  over {C}*-algebras}, Linear.\ Algebra.\ Appl., 456 (2014), pp.~174--198.

\bibitem{BenziBoitoRazouk2013}
{\sc M.~Benzi, P.~Boito, and N.~Razouk}, {\em Decay properties of spectral
  projectors with applications to electronic structure}, SIAM Rev., 55 (2013),
  pp.~3--64.

\bibitem{BenziGolub1999}
{\sc M.~Benzi and G.~H. Golub}, {\em Bounds for the entries of matrix functions
  with applications to preconditioning}, BIT, 39 (1999), pp.~417--438.

\bibitem{BenziRazouk2007}
{\sc M.~Benzi and N.~Razouk}, {\em Decay bounds and {$O(n)$} algorithms for
  approximating functions of sparse matrices}, Electron.\ Trans.\ Numer.\
  Anal., 28 (2007), pp.~16--39.

\bibitem{BenziSimoncini2015}
{\sc M.~Benzi and V.~Simoncini}, {\em Decay bounds for functions of {H}ermitian
  matrices with banded or {K}ronecker structure}, SIAM J.\ Matrix Anal.\ Appl.,
  36 (2015), pp.~1263--1282.

\bibitem{Block2007}
{\sc J.~Bloch, A.~Frommer, B.~Lang, and T.~Wettig}, {\em An iterative method to
  compute the sign function of a non-{H}ermitian matrix and its application to
  the overlap {D}irac operator at nonzero chemical potential}, Comput.\ Phys.\
  Commun., 177 (2007), pp.~933--943.

\bibitem{BollhoeferEftekhariScheideggerSchenk2019}
{\sc M.~Bollh\"ofer, A.~Eftekhari, S.~Scheidegger, and O.~Schenk}, {\em
  Large-scale sparse inverse covariance matrix estimation}, {SIAM} J.\ Sci.\
  Comput., 41 (2019), pp.~A380--A401.

\bibitem{Burrage2012}
{\sc K.~Burrage, N.~Hale, and D.~Kay}, {\em An efficient mmplicit {FEM} scheme
  for fractional-in-space reaction-diffusion equations}, {SIAM} J.\ Sci.\
  Comput., 34 (2012).

\bibitem{Cheng1973}
{\sc K.~Y. Cheng}, {\em Minimizing the bandwidth of sparse symmetric matrices},
  Computing, 11 (1973), pp.~103--110.

\bibitem{CrouzeixPalencia2017}
{\sc M.~Crouzeix and C.~Palencia}, {\em The numerical range is a
  $(1+\sqrt{2})$-spectral set}, SIAM J.\ Matrix Anal.\ Appl., 38 (2017),
  pp.~649--655.

\bibitem{CuthillMcKee1969}
{\sc E.~Cuthill and J.~McKee}, {\em Reducing the bandwidth of sparse symmetric
  matrices}, in Proceedings of the 1969 24th National Conference, ACM '69, New
  York, NY, USA, 1969, ACM, pp.~157--172.

\bibitem{DemkoMossSmith1984}
{\sc S.~Demko, W.~F. Moss, and W.~Smith}, {\em Decay rates for inverses of
  banded matrices}, Math.\ Comp., 43 (1984), pp.~491--499.

\bibitem{DongLiu1993}
{\sc S.~Dong and K.~Liu}, {\em Stochastic estimation with $z_2$ noise}, Phys.
  Lett. B, 328 (1994), pp.~130--136.

\bibitem{EijkhoutPolman1988}
{\sc V.~Eijkhout and B.~Polman}, {\em Decay rates of inverses of banded
  {$M$}-matrices that are near to {T}oeplitz matrices}, Linear Algebra Appl.,
  109 (1988), pp.~247--277.

\bibitem{Eshof2002}
{\sc J.~{\noopsort{Eshof}van den Eshof}, A.~Frommer, T.~Lippert, K.~Schilling,
  and H.~A. van~der Vorst}, {\em Numerical methods for the {QCD} overlap
  operator, {I}. {S}ign-function and error bounds}, Comput.\ Phys.\ Commun.,
  146 (2002), pp.~203--224.

\bibitem{Estrada2011}
{\sc E.~Estrada}, {\em The {S}tructure of {C}omplex {N}etworks: {T}heory and
  {A}pplications}, Oxford University Press, Inc., New York, 2011.

\bibitem{EstradaHigham2010}
{\sc E.~Estrada and D.~Higham}, {\em Network properties revealed through matrix
  functions}, SIAM Rev., 52 (2010), pp.~696--714.

\bibitem{EstradaRodrigez2005}
{\sc E.~Estrada and J.~A. Rodr\'{\i}guez-Vel\'azquez}, {\em Subgraph centrality
  in complex networks}, Phys. Rev. E, 71 (2005), p.~056103.

\bibitem{FertinGodardRaspaud2003}
{\sc G.~Fertin, E.~Godard, and A.~Raspaud}, {\em Acyclic and k-distance
  coloring of the grid}, Inform. Process. Lett., 87 (2003), pp.~51 -- 58.

\bibitem{FrommerSchimmelSchweitzer2018_1}
{\sc A.~Frommer, C.~Schimmel, and M.~Schweitzer}, {\em Bounds for the decay of
  the entries in inverses and {C}auchy--{S}tieltjes functions of certain
  sparse, normal matrices}, Numer. Linear Algebra Appl., 25 (2018), p.~e2131.

\bibitem{FrommerSchimmelSchweitzer2018_2}
{\sc A.~Frommer, C.~Schimmel, and M.~Schweitzer}, {\em Non-{T}oeplitz decay
  bounds for inverses of {H}ermitian positive definite tridiagonal matrices},
  Electron. Trans. Numer. Anal., 48 (2018), pp.~362--372.

\bibitem{FrommerSimoncini2008}
{\sc A.~Frommer and V.~Simoncini}, {\em Matrix functions}, in Model Order
  Reduction: Theory, Research Aspects and Applications, W.~H.~A. Schilders,
  H.~A. van~der Vorst, and J.~Rommes, eds., Springer, Berlin Heidelberg, 2008,
  pp.~275--303.

\bibitem{GibbsPooleStockmeyer1967}
{\sc N.~E. Gibbs, W.~G. Poole, and P.~K. Stockmeyer}, {\em An algorithm for
  reducing the bandwidth and profile of a sparse matrix}, SIAM J. Numer. Anal.,
  13 (1976), pp.~236--250.

\bibitem{Ginosar2008}
{\sc Y.~Ginosar, I.~Gutman, T.~Mansour, and M.~Schork}, {\em Estrada index and
  {C}hebyshev polynomials}, Chem. Phys. Lett., 454 (2008), pp.~145--147.

\bibitem{GolubHeathWahba1979}
{\sc G.~H. Golub, M.~Heath, and G.~Wahba}, {\em Generalized cross-validation as
  a method for choosing a good ridge parameter}, Technometrics, 21 (1979),
  pp.~215--223.

\bibitem{GolubMeurant1994}
{\sc G.~H. Golub and G.~Meurant}, {\em Matrices, moments and quadrature}, in
  Numerical Analysis 1993, D.~F. Griffiths and G.~A. Watson, eds., Essex, 1994,
  Longman Scientific \& Technical, pp.~105--156.

\bibitem{GolubMeurant2010}
\leavevmode\vrule height 2pt depth -1.6pt width 23pt, {\em Matrices, {M}oments
  and {Q}uadrature with {A}pplications}, Princeton University Press, Princeton
  and Oxford, 2010.

\bibitem{GolubMatt1995}
{\sc G.~H. Golub and U.~von Matt}, {\em Generalized {C}ross-{V}alidation for
  {L}arge {S}cale {P}roblems}, J. Comput. Graph. Statist., 6 (1995), pp.~1--34.

\bibitem{GonzagadeOliveira2018}
{\sc S.~L. Gonzaga~de Oliveira, J.~A.~B. Bernardes, and G.~O. Chagas}, {\em An
  evaluation of low-cost heuristics for matrix bandwidth and profile
  reductions}, Comput.\ Appl.\ Math., 37 (2018), pp.~1412--1471.

\bibitem{Guettel2010}
{\sc S.~G\"{u}ttel}, {\em Rational {K}rylov Methods for Operator Functions},
  {PhD thesis}, {Fakult\"{a}t f\"{u}r Mathe\-matik und Informatik der
  Technischen Universit\"{a}t Bergakademie Freiberg}, 2010.

\bibitem{Han2015}
{\sc I.~Han, D.~Malioutov, and J.~Shin}, {\em Large-scale log-determinant
  computation through stochastic {C}hebyshev expansions}, in International
  Conference on Machine Learning, 2015, pp.~908--917.

\bibitem{Higham2008}
{\sc N.~J. Higham}, {\em Functions of Matrices: Theory and Computation},
  Society for Industrial and Applied Mathematics, Philadelphia, PA, USA, 2008.

\bibitem{Kubale2004}
{\sc M.~Kubale}, {\em Graph Colorings}, vol.~352 of Contemporary mathematics,
  American Mathematical Soc., 2004.

\bibitem{LaeuchliStathopoulos2020}
{\sc J.~Laeuchli and A.~Stathopoulos}, {\em Extending hierarchical probing for
  computing the trace of matrix inverses}, {SIAM} J.\ Sci.\ Comput., 42 (2020),
  pp.~A1459--A1485.

\bibitem{LimRodiguesXiao2007}
{\sc A.~Lim, B.~Rodrigues, and F.~Xiao}, {\em A fast algorithm for bandwidth
  minimization}, Int.\ J.\ Artif.\ Intell.\ T., 16 (2007), pp.~537--544.

\bibitem{meyer2021hutch++}
{\sc R.~A. Meyer, C.~Musco, C.~Musco, and D.~P. Woodruff}, {\em Hutch++:
  {O}ptimal stochastic trace estimation}, in Symposium on Simplicity in
  Algorithms (SOSA), SIAM, 2021, pp.~142--155.

\bibitem{Miller2008}
{\sc S.~J. Miller}, {\em An identity for sums of polylogarithm functions},
  Integers, 8 (2008), p.~A15.

\bibitem{Nabben1999}
{\sc R.~Nabben}, {\em Two-sided bounds on the inverses of diagonally dominant
  tridiagonal matrices}, Linear Algebra Appl., 287 (1999), pp.~289--305.

\bibitem{Neuberger2000}
{\sc H.~Neuberger}, {\em The overlap {D}irac operator}, in Numerical Challenges
  in Lattice Quantum Chromodynamics. Lecture Notes in Computational Science and
  Engineering, F.~A., L.~T., M.~B., and S.~K., eds., vol.~15, Springer, Berlin,
  Heidelberg, 2000.

\bibitem{Parlett1980}
{\sc B.~Parlett}, {\em The Symmetric Eigenvalue Problem}, Prentice-Hall Series
  in Computational Mathematics, Pearson Education Canada, 1980.

\bibitem{Pettitt2002}
{\sc A.~N. Pettitt, I.~S. Weir, and A.~G. Hart}, {\em A conditional
  autoregressive {G}aussian process for irregularly spaced multivariate data
  with application to modelling large sets of binary data}, Stat.\ Comput., 12
  (2002), pp.~353--367.

\bibitem{PleissEtAl2020}
{\sc G.~Pleiss, M.~Jankowiak, D.~Eriksson, A.~Damle, and J.~R. Gardner}, {\em
  Fast matrix square roots with applications to {G}aussian processes and
  {B}ayesian optimization}, tech. rep., 2020.
\newblock arXiv:2006.11267.

\bibitem{PozzaSimoncini2016}
{\sc S.~Pozza and V.~Simoncini}, {\em Inexact {A}rnoldi residual estimates and
  decay properties for functions of non-{H}ermitian matrices}, BIT, 59 (2019),
  pp.~969--986.

\bibitem{PT18}
{\sc S.~Pozza and F.~Tudisco}, {\em On the stability of network indices defined
  by means of matrix functions}, SIAM J.\ Matrix Anal.\ Appl., 39 (2018),
  pp.~1521--1546.

\bibitem{Rasmussen2005}
{\sc C.~E. Rasmussen and C.~K.~I. Williams}, {\em Gaussian Processes for
  Machine Learning (Adaptive Computation and Machine Learning)}, The MIT Press,
  2005.

\bibitem{ReidScott2006}
{\sc J.~Reid and J.~Scott}, {\em Reducing the total bandwidth of a sparse
  unsymmetric matrix}, SIAM J.\ Matrix Anal.\ Appl., 28 (2006), pp.~805--821.

\bibitem{Roberts1980}
{\sc J.~D. Roberts}, {\em Linear model reduction and solution of the algebraic
  {R}iccati equation by use of the sign function}, Int.\ J.\ Control, 32
  (1980), pp.~677--687.

\bibitem{Rue2005}
{\sc H.~Rue and L.~Held}, {\em Gaussian Markov Random Fields: Theory And
  Applications}, CRC press, 2005.

\bibitem{Sapoval1991}
{\sc B.~Sapoval, T.~Gobron, and A.~Margolina}, {\em Vibrations of fractal
  drums}, Phys. Rev. Lett., 67 (1991), pp.~2974--2977.

\bibitem{Schimmel2019}
{\sc C.~Schimmel}, {\em Bounds for the decay in matrix functions and its
  exploitation in matrix computations}, {PhD thesis}, Bergische Universit\"at
  Wuppertal, 2019.

\bibitem{SextonWeingarten1994}
{\sc J.~Sexton and D.~Weingarten}, {\em Systematic expansion for full {QCD}
  based on the valence approximation}, tech. rep., Watson Research Center,
  1994.
\newblock arXiv:hep-lat/9411029.

\bibitem{Smyth1985}
{\sc W.~Smyth}, {\em Algorithms for the reduction of matrix bandwidth and
  profile}, J.\ Comput.\ Appl.\ Math., 12--13 (1985), pp.~551--561.

\bibitem{Stathopoulos2013}
{\sc A.~Stathopoulos, J.~Laeuchli, and K.~Orginos}, {\em Hierarchical probing
  for estimating the trace of the matrix inverse on toroidal lattices}, {SIAM}
  J.\ Sci.\ Comput., 35 (2013), pp.~299--322.

\bibitem{Strakos2009}
{\sc Z.~Strako\v{s}}, {\em Model reduction using the {V}orobyev moment
  problem}, Numer.\ Algorithms, 51 (2009), pp.~363--379.

\bibitem{TangSaad2012}
{\sc J.~M. Tang and Y.~Saad}, {\em A probing method for computing the diagonal
  of a matrix inverse}, Numer.\ Linear Algebra Appl., 19 (2012), pp.~485--501.

\bibitem{SaadUbaruJie2017}
{\sc S.~Ubaru, J.~Chen, and Y.~Saad}, {\em Fast estimation of
  $\mathrm{tr}(f({A}))$ via stochastic {L}anczos quadrature}, SIAM J.\ Matrix
  Anal.\ Appl., 38 (2017), pp.~1075--1099.

\bibitem{SaadUbaru2017}
{\sc S.~Ubaru and Y.~Saad}, {\em Applications of trace estimation techniques},
  in International Conference on High Performance Computing in Science and
  Engineering, Springer, 2017, pp.~19--33.

\end{thebibliography}

\end{document}